\newcommand{\R}{\mathbb R}
\newcommand{\Z}{\mathbb Z}
\newcommand{\C}{\mathbb C}
\newcommand{\N}{\mathbb{N}}
\newcommand{\T}{\mathbb{T}}
\newtheorem{thm}{Theorem}[section]
\newtheorem{lem}[thm]{Lemma}
\newtheorem{rem}{\bf Remark}[section]
\theoremstyle{definition}
\newtheorem{defn}[thm]{Definition}
\numberwithin{equation}{section}
\begin{document}

\title[Gevrey quasi-periodic operator]{Spectral theory of the multi-frequency quasi-periodic operator with a Gevrey type perturbation}
\author{Yunfeng Shi}
\address[Y. Shi] {College of Mathematics,
Sichuan University,
Chengdu 610064,
China}
\email{yunfengshi@scu.edu.cn, yunfengshi18@gmail.com}
\date{\today}

\keywords{Quasi-periodic operator, Gevrey perturbation,  long-range hopping, multi-scale analysis, semi-algebraic sets, Cartan's estimates}

\begin{abstract}
In this paper  we study the multi-frequency quasi-periodic operator with a Gevrey type perturbation. We first establish the large deviation theorem (LDT) for the multi-dimensional  operator with a sub-exponential (or Gevrey) long-range hopping, and then prove the pure point spectrum property. Based on the LDT and the Aubry duality, we show the absence of   point spectrum for the $1D$ exponential long-range  operator with a multi-frequency and a Gevrey  potential. We also prove the spectrum has positive   Lebesgue measure.
\end{abstract}

\maketitle





\tableofcontents

\maketitle
\section{Introduction and main results}
In this paper we study the spectral properties of  the multi-frequency long-range quasi-periodic operator with a Gevrey type perturbation. More precisely,  we first consider the multi-dimensional quasi-periodic operator with a  Gevrey long-range hopping and  an analytic potential which satisfies the \textit{non-degeneracy} condition. We prove such operator has pure point spectrum (with sub-exponentially decaying eigenfunctions) in  the large coupling regime (see Theorem \ref{mthm2} in the following).  The  Aubry duality of this operator is a $1D$ multi-frequency quasi-periodic operator with an exponential long-range hopping and a Gevrey potential. We show the absence of point spectrum for the Aubry duality in the small coupling regime  (see Theorem \ref{mthm1} in the following). We want to mention that in the small coupling regime the \textit{non-degeneracy} assumption on the potential is not needed. While we can prove the absence of  point spectrum for the Aubry duality, we can not obtain the existence of  absolutely continuous (ac) spectrum via the present method.  The Kotani's theory suggests that the existence of  ac spectrum  implies the positivity of the Lebesgue measure of the spectrum, which  motivates us to study the measure of the spectrum (see  Theorem \ref{cor1} in the following) in the small coupling regime.

We start with the long-range hopping, which is a Toeplitz operator. Let $h: \R^d/\Z^d=\T^d\to\R$ be a function. We define the Toeplitz operator  (with the symbol $h$) as
\begin{align*}
\mathcal{T}_h(m,n)=\widehat h_{m-n},\ m,n\in\Z^d,
\end{align*}
where $\widehat h_n=\int\limits_{\T^d}h(\theta)e^{-2\pi in\cdot\theta}d\theta$. We also define $\delta_{mn}= 1$ if $m=n$, and $\delta_{mn}=0$ if $m\neq n$.
 \subsection{Pure point spectrum}
We first study the multi-dimensional operator with a Gevrey long-range hopping and an analytic potential.

Assume that $v$ is Gevrey regular, i.e.,  $v(x)\in C^\infty(\T^d,\R)$ satisfies for some $\gamma\in(0,1]$ and $\forall\ n\in\Z^d$,
\begin{align}\label{gvc}
|\widehat v_n|\leq e^{-\rho|n|^\gamma},
\end{align}
where $\rho>0$, $|n|=\sup\limits_{1\leq i\leq d}|n_i|$. Notice that $v$ is analytic if $\gamma=1$.

We consider the operator
\begin{align}\label{lro}
\widetilde H_{\lambda f,\omega,\theta}&=\lambda \mathcal{T}_v+ f(\theta+n\omega)\delta_{nn'}, \ \theta\in\T^d,\\
\label{ds} n\omega&=(n_1\omega_1,\cdots,n_d\omega_d),
\end{align}
where  $\lambda^{-1}>0$ is the coupling, $\theta$ is the phase, $\omega\in\T^d$ is the frequency and $f$ is a real analytic function satisfying the \textit{non-degeneracy} condition: For all $j=1,\cdots,d$ and $$\theta_j^\neg=(\theta_1,\cdots,\theta_{j-1},\theta_{j+1},\cdots,\theta_d)\in\T^{d-1},$$ the map
\begin{align*}
\theta_j\mapsto f(\theta_j,\theta_j^\neg)
\end{align*}
is a {non-constant} function of $\theta_j\in\T$.

We have
\begin{thm}\label{mthm2}
Let $\widetilde H_{{\lambda} f,\omega,\theta}$ be defined by \eqref{lro}--\eqref{ds} with $v$ satisfying \eqref{gvc} and $f$ satisfying the non-degeneracy condition. Then  for any $\varepsilon>0$,  there exists a $\lambda_0=\lambda_0(d,\gamma,\rho,f,\varepsilon)>0$ such that
the following holds: For $0\leq\lambda\leq \lambda_0$ and $\theta\in\T^d$, there exists some $\Omega=\Omega(d,\gamma,\rho,\lambda f,\theta)\subset \T^d$ with $\mathrm{mes}(\Omega)\leq \varepsilon$ such that, if $\omega\in\T^d\setminus\Omega$,  then
$\widetilde H_{{\lambda} f,\omega,\theta}$ has pure point spectrum with sub-exponentially (exponentially if $\gamma=1$) decaying eigenfunctions.
\end{thm}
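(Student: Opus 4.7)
The plan is to carry out a Bourgain--Goldstein style multi-scale analysis (MSA), adapted to the Gevrey long-range setting, using the LDT established earlier in the paper as the main probabilistic input. At the initial scale ($\lambda=0$), $\widetilde H_{0,\omega,\theta}$ is diagonal with eigenvalues $\{f(\theta+n\omega)\}_{n\in\Z^d}$. The non-degeneracy of the analytic $f$, combined with a Diophantine condition on $\omega$, yields a \L{}ojasiewicz-type separation $|f(\theta+n\omega)-E|\ge N^{-C}$ for most $n$ in a box of side $N$, which, coupled with the sub-exponential decay of $\widehat v$, gives a starting sub-exponential decay of the finite-volume Green's function at the first scale $N_0$.

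Next I would set up a geometric sequence of scales $N_{k+1}\approx N_k^{\sigma}$ with $\sigma>1$ tuned to $\gamma$, and prove inductively that on a large-measure set of phases $\theta$ and admissible energies $E$,
\[
\big|G_{\Lambda_{N_k}}(E;x,y)\big|=\big|\big((\widetilde H_{\lambda f,\omega,\theta}-E)|_{\Lambda_{N_k}}\big)^{-1}(x,y)\big|\le e^{-c|x-y|^\gamma},\qquad |x-y|\ge N_k^{1/2}.
\]
The step $N_k\to N_{k+1}$ uses the resolvent identity, covering $\Lambda_{N_{k+1}}$ by translates of good boxes of size $N_k$; the LDT handles each single box, and Cartan's subharmonic estimates upgrade single-box control to bounds on the $\theta$-exceptional set at the next scale. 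A crucial feature of the long-range setting is that the ``boundary'' sum in the resolvent identity ranges over all of $\Z^d$ and must be closed up by combining the Green's function decay with the sub-exponential hopping estimate $|\widehat v_n|\le e^{-\rho|n|^\gamma}$.

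To pass from fixed-$E$ estimates to the full spectrum and to eliminate bad frequencies, I would replace $\widehat v$ at scale $N_k$ by a polynomial truncation of degree $\asymp N_k^{1/\gamma}$ (absorbing the tail as a sub-exponentially small error), so that the set of $(\theta,\omega,E)$ for which $G_{\Lambda_{N_k}}$ is bad becomes semi-algebraic of complexity polynomial in $N_k$. Standard projection and sublevel estimates à la Bourgain then rule out double resonances, i.e.\ two well-separated boxes simultaneously bad at the same energy, and summing the $\omega$-exclusions over scales with geometric decay produces the set $\Omega\subset\T^d$ with $\mathrm{mes}(\Omega)\le\varepsilon$. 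For $\omega\notin\Omega$, a Shnol/Poisson-formula argument concludes: any polynomially bounded generalized eigenfunction $\psi$ satisfies
\[
\psi(x)=-\sum_{y\in\Lambda,\,z\notin\Lambda} G_\Lambda(E;x,y)\,\lambda\,\mathcal{T}_v(y,z)\,\psi(z),
\]
and choosing $\Lambda$ to contain the origin but exclude $x$ gives $|\psi(x)|\le e^{-c|x|^\gamma}$ (exponential when $\gamma=1$), so the spectrum is pure point with sub-exponentially decaying eigenfunctions.

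The main obstacle is the Gevrey regime itself: sub-exponential decay is borderline, and MSA loses geometric factors more easily than in the analytic case. The most delicate points are (i) choosing the scale ratio $\sigma$ so that the sub-exponential losses incurred at each step of the resolvent expansion do not overwhelm the inductive gain, (ii) truncating $\widehat v$ to a polynomial of just the right degree so that the semi-algebraic complexity at scale $N_k$ remains polynomial in $N_k$ while the truncation error stays below $e^{-cN_k^\gamma}$, and (iii) closing the long-range boundary sums, where the standard finite-range Poisson bound has to be replaced by a convolution-type estimate weighted by $|\widehat v_{y-z}|$. Once these three ingredients are balanced, the analytic-potential MSA template carries through essentially verbatim.
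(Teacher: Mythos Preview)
Your proposal is correct and follows essentially the same route as the paper: the paper's proof of Theorem~\ref{mthm2} consists precisely of invoking the LDT (Theorem~\ref{ldt}, established via the multi-scale analysis of \cite{BG,JLS} with Cartan's estimates and the resolvent identity), then eliminating the energy $E$ via the semi-algebraic projection Lemma~\ref{proj} (your double-resonance step), and concluding by Shnol's theorem and the Poisson identity exactly as you wrote. Two minor points of departure worth noting: first, the paper's initial step (Lemma~\ref{lemini}) does \emph{not} use any Diophantine condition on $\omega$---the {\L}ojasiewicz bound from non-degeneracy of $f$ alone controls the bad set of $\theta$, and all arithmetic restrictions on $\omega$ enter later through the semi-algebraic eliminations (Lemmas~\ref{svl} and~\ref{rmf}); second, the paper takes a much faster scale growth $\log N\sim N_1^{c_1}$ (so $N_{k+1}\sim e^{N_k^{c_1}}$) rather than your polynomial $N_{k+1}\approx N_k^\sigma$, which is what makes the Gevrey losses in the Cartan step controllable.
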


\begin{rem}
This theorem extends the result of Bourgain \cite{BG} to the Gevrey long-range hopping case.
\end{rem}

 The search for  nature of the spectrum and the behaviour of the eigenfunctions for the $1D$ quasi-periodic operator  has attracted great attention over years. Of particular importance is the phenomenon of the Anderson localization (AL), {where we say an operator satisfies AL if it has   pure point spectrum with exponentially decaying eigenfunctions.
The early results on the AL  were perturbative and restricted on ``cos'' type potentials  \cite{SJ,FSWC}. The first {non-perturbative}\footnote{Here, by a {non-perturbative} argument  we mean the argument allows the size of the perturbation to be independent of the frequency.} AL  was obtained by Jitomirskaya \cite{Jit94} in the almost Matheiu operator (AMO) setting. By developing a new type of KAM arguments, Eliasson \cite{EA}  proved  pure point spectrum for the $1D$  Schr\"odinger operator with a large Gevrey potential. Eliasson's result is  perturbative and needs the potential to satisfy some \textit{transversality} condition.  Later, the celebrated work of Jitomirskaya \cite{JA} indicated that   the AL can  hold for the AMO with the Diophantine condition if the coupling $\lambda>1$.
Significantly, Bourgain-Goldstein \cite{BGA} established the {non-perturbative} AL for the $1D$ Schr\"odinger operator with a single-frequency and an analytic potential. Klein \cite{KJ,KJ1} proved the AL for the $1D$  Schr\"odinger operator with a Gevrey potential. 
 In the long-range setting Bourgain-Jitomirskaya \cite{BJI} proved the {non-perturbative} AL for the exponential long-range operator with a ``cos'' potential. In \cite{BB} Bourgain extended  the result of \cite{BJI} to an operator with an analytic potential. An improvement of some long-range estimates of \cite{BJI} has recently been  established by Avila-Jitomirskaya \cite{AJ10}. We also mention the work of Jian-Shi-Yuan \cite{JSY} in which a {non-perturbative} AL was obtained for some $1D$ long-range block operator. We refer the reader to {\cite{AJA,AYZD,JLA} }for more recent AL results in the $1D$ setting.

  In the multi-dimensional case only the {perturbative} localization can  be expected \cite{Bou021}. The first  multi-dimensional localization was obtained by Chulaevsky and Dinaburg \cite{CD93} for a single-phase operator with an exponential long-range hopping. Their {perturbative} KAM  methods seem not applicable in the multi-phase case. Bourgain, Goldstein and Schlag \cite{BGSA} developed a new way to combine the multi-scale analysis developed by Fr\"ohlich-Spencer \cite{FS83} and some of the {non-perturbative} methods to the case $(n,\theta,\omega)\in\Z^2\times\T^2\times\T^2$, and obtained the AL for the large analytic potential. To perform such multi-scale analysis, the sub-linear growth  of the number of ``bad'' small boxes contained in a big box  becomes essential. In the single-phase case only Diophantine condition of the frequency can ensure the sub-linearity property.  In the $(n,\theta,\omega)\in\Z^2\times\T^2\times\T^2$ case  to get the sub-linearity property, additionally arithmetic conditions on the frequency are needed \cite{BGSA}.  It was also shown by Bourgain \cite{BA} that the Diophantine property of the frequency of the skew shift is  sufficient to guarantee the sub-linearity property.  For $(n,\theta,\omega)\in\Z^d\times\T^d\times\T^d$ with $d\geq 3$,   it is difficult to ensure the sub-linearity property as in the case $d\leq2$ (or $dD$ with the single-phase). To overcome this problem, Bourgain \cite{BG} introduced new methods and allowed the eliminations of the frequency to depend on the potential when proving the LDT.  This enables him to extend results of \cite{BGSA} to \textit{arbitrary} dimension $d$. The basic techniques of \cite{BG} are also  semi-algebraic sets arguments and matrix-valued Cartan's estimates, but need more delicate analysis. Recently, methods of Bourgain \cite{BG} have been largely extended  by Jitomirskaya-Liu-Shi \cite{JLS} to  the long-range  operator with $(n,\theta,\omega)\in\Z^d\times\T^b\times\T^b$  for \textit{arbitrary} $b,  d$. The result of \cite{JLS} is significantly more general and more technically complex, and can also be viewed as both a clarification and at the same time streamlining of \cite{BG}.
  We also mention the work of Bourgain-Kachkovskiy \cite{BKG} in which the case $(n,\theta,\omega)\in\Z^2\times\T^2\times\T$ was studied. For the multi-dimensional long-range operator  with a ``cos'' potential, localization results
 with the {fixed Diophantine  frequency} have been obtained  by Jitomirskaya-Kachkovskiy \cite{JKM} and Ge-You-Zhou \cite{GYZarxiv}.




\subsection{Absence of point spectrum}
We then study the Aubry duality of \eqref{lro} in the case $f(\theta+n\omega)=g(\theta+n\cdot\omega)$,  where $\theta\in\T, n\cdot\omega=\sum\limits_{i=1}^dn_i\omega_i$ and
$g$ is a non-constant real analytic function.
This leads to the  $1D$ exponential long-range quasi-periodic  operator
\begin{equation}\label{qps}
H_{\lambda v,\omega,x}=\mathcal{T}_g +\lambda v(x+\ell\omega)\delta_{\ell\ell'},\ x\in\T^d, \ell,\ell'\in\Z,
\end{equation}
where $v$ is defined by \eqref{gvc}.

If $g(\theta)=2\cos2\pi\theta$, the operator  (\ref{qps}) becomes the standard multi-frequency quasi-periodic Schr\"odinger operator.
In particular, {we} call (\ref{qps}) the almost Mathieu operator  if it is  a $1D$ quasi-periodic Schr\"odinger operator satisfying $v(x)=2\cos2\pi x$.

Denote by $\mathrm{mes}(\cdot)$ the Lebesgue measure. We have
\begin{thm}\label{mthm1}
Let $H_{\lambda v,\omega,x}$ be defined by \eqref{qps} with $v$ satisfying \eqref{gvc}. Then for any $\varepsilon>0$, there exists a $\lambda_0=\lambda_0(g,d,\gamma,\rho,\varepsilon)>0$ such that the following holds:
For $0\leq\lambda\leq \lambda_0$,  there exists some $\Omega=\Omega(g,d,\gamma,\rho,\lambda)\subset \T^d$ with $\mathrm{mes}(\Omega)\leq \varepsilon$ such that, if $\omega\in\T^d\setminus\Omega$, then $H_{\lambda v,\omega,x}$ has no  point spectrum for all $x\in\T^d$.
\end{thm}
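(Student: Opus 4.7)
The plan is to use Aubry duality to reduce Theorem \ref{mthm1} to the large deviation estimates (LDT) that underpin Theorem \ref{mthm2}. Concretely, $H_{\lambda v,\omega,x}$ is Aubry-dual to the operator $\widetilde H_{\lambda f,\omega,\theta}$ of Theorem \ref{mthm2} with the choice $f(\theta_1,\ldots,\theta_d):=g(\theta_1+\cdots+\theta_d)$. This $f$ is real analytic on $\T^d$ and satisfies the non-degeneracy hypothesis of Theorem \ref{mthm2} precisely because $g$ is a non-constant real analytic function on $\T$. The duality is set up as follows: if $\psi\in\ell^2(\Z)$ satisfies $H_{\lambda v,\omega,x}\psi=E\psi$, then the Fourier transform $\hat\psi(\theta):=\sum_\ell\psi_\ell e^{-2\pi i\ell\theta}\in L^2(\T)$ satisfies
\begin{equation*}
g(\theta)\hat\psi(\theta)+\lambda\sum_{n\in\Z^d}\hat v_n e^{2\pi in\cdot x}\hat\psi(\theta-n\cdot\omega)=E\hat\psi(\theta),
\end{equation*}
and restricting this equation to the $\Z^d$-orbit of any $\theta_0\in\T$ produces a sequence $a_m^{\theta_0}:=\hat\psi(\theta_0-m\cdot\omega)$, $m\in\Z^d$, solving the eigenvalue equation for $\widetilde H_{\lambda g,\omega,\theta_0}$ (with $v$ replaced by its $x$-translate, which has identical Gevrey parameters and thus still satisfies \eqref{gvc}).

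I would then argue by contradiction. Fix $x\in\T^d$ and suppose $\psi\ne 0$ is an $\ell^2$ eigenfunction of $H_{\lambda v,\omega,x}$ at some $E$. Since $\hat\psi\in L^2(\T)$ is non-zero, for Diophantine $\omega$ the Birkhoff ergodic theorem applied to the $\Z^d$-action by $\omega$-translation yields, for almost every $\theta_0\in\T$,
\begin{equation*}
\frac{1}{(2N+1)^d}\sum_{|m|\le N}|a_m^{\theta_0}|^2\longrightarrow \|\hat\psi\|_{L^2(\T)}^2>0,
\end{equation*}
so $\{a_m^{\theta_0}\}$ is a non-trivial, polynomially bounded, non-decaying generalized solution of the dual eigenvalue equation at $E$. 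On the other hand, the LDT established in the proof of Theorem \ref{mthm2}, combined with the semi-algebraic and Cartan multi-scale machinery of \cite{BG,JLS}, forces any polynomially bounded solution of $\widetilde H_{\lambda g,\omega,\theta_0}a=Ea$ to decay sub-exponentially in $|m|$ (this is precisely the implication that underlies the pure point conclusion of Theorem \ref{mthm2}). The two conclusions are incompatible, so no such $\psi$ can exist.

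The main technical obstacle is producing a single exclusion set $\Omega=\Omega(g,d,\gamma,\rho,\lambda)$ that serves simultaneously every $x\in\T^d$ and almost every $\theta_0\in\T$, at the unknown eigenvalue $E$. Uniformity in $x$ is essentially automatic: the $x$-translate of $v$ retains the Gevrey bound \eqref{gvc}, so the LDT constants depend only on $d,\gamma,\rho$. The harder point is uniformity in $\theta_0$ and $E$, which requires controlling the exceptional $(\theta_0,E)$-set on which the polynomially-bounded-implies-sub-exponentially-decaying implication can fail. As in \cite{BG,JLS}, this is handled by semi-algebraic complexity estimates for the sets defined by bad Green's function events, adapted here to the Gevrey (sub-exponential) decay of the hopping. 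Once this uniform LDT is in place, the Aubry-duality contradiction goes through for \emph{every} $x\in\T^d$, yielding the ``for all $x$'' conclusion of Theorem \ref{mthm1}.
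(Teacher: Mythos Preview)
Your overall duality strategy matches the paper's: pass to the Aubry dual, obtain a polynomially bounded generalized solution $a_m^{\theta_0}$ of the $\Z^d$ equation, and derive a contradiction from the LDT. However, you substantially overcomplicate the contradiction step and misidentify the ``main technical obstacle''.

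The point you miss is that the LDT (Theorem~\ref{ldt}) already produces a frequency set $\Omega_N=\Omega_N(d,\rho,\gamma,\lambda f,c_1)$ that is \emph{independent of $E$, $\theta$, and $x$}. Hence no $E$-elimination, and no extra semi-algebraic work to get uniformity in $(\theta_0,E)$, is required. Once $\omega\in\bigcap_{N\ge\underline N_0}\Omega_N$ is fixed and you \emph{suppose} there is an $\ell^2$ eigenfunction $\psi$ at some $E$, you simply apply the LDT at that single $E$: the bad phase set $\Theta=\bigcup_M\bigcap_{N\ge M}X_N(E,\omega)$ has one-dimensional measure zero (using the section bound in Theorem~\ref{ldt}), so for a.e.\ $\theta_0\in\T$ the Green's function estimates hold for all large $N$.

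The paper then avoids your ``polynomially bounded $\Rightarrow$ sub-exponentially decaying'' implication entirely, via Delyon's trick: for such $\theta_0$, the Poisson identity \eqref{pi} and the polynomial bound $|a_m^{\theta_0}|\lesssim |m|^d$ (obtained from the elementary $L^2$ computation $\int_\T\sum_m|a_m^{\theta_0}|^2(1+|m|^{2d})^{-1}d\theta_0<\infty$, not Birkhoff) give directly $|a_0^{\theta_0}|=|\hat\psi(\theta_0)|=o(1)$ as $N\to\infty$, hence $\hat\psi(\theta_0)=0$. Since this holds for a.e.\ $\theta_0$, one gets $\|\hat\psi\|_{L^2}=0$, contradicting $\|\psi\|_{\ell^2}>0$. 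Your route via the full localization statement of Theorem~\ref{mthm2} would in principle work, but it drags in exactly the $\theta$-dependent frequency removal and $E$-elimination machinery that the Delyon argument is designed to bypass.
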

\begin{rem}
 {The \textit{non-degeneracy} condition on $v$ is not needed here. In addition, we think the operator should have pure ac spectrum if $0<\lambda\ll1$.} 
\end{rem}

  {In the following we review some results on the ac spectrum}.   Consider first the  one-frequency operator (i.e. $d=1$) case.  As is well-known, the spectrum of the free Laplacian   on $\Z$ is pure ac.  Thus the question whether pure {ac} spectrum property holds for the Schr\"odinger operator with a small quasi-periodic potential naturally arises. Early results  were restricted on the AMO case \cite{BLTC,CDJ}. In {the} continuous setting Eliasson \cite{EC} proved pure {ac} spectrum for a Schr\"odinger operator with the Diophantine frequency and a small analytic quasi-periodic potential by using the KAM  scheme (see \cite{Amo09} for the discrete case). Later, Bourgain-Jitomirskaya \cite{BJI} developed a {non-perturbative} argument to handle the $1D$ discrete Schr\"odinger operator with a small analytic quasi-periodic potential for a.e. phase $x\in\T$.  Puig \cite{Pui06} improved partial results of Eliasson \cite{EC}  to the non-perturbative and discrete setting.   The proof of Puig was based on the Aubry duality and a non-perturbative localization result in the exponential long-range Hamiltonian in \cite{BJI}. Significantly, Avila-Jitomirskaya \cite{AJ10} developed a quantitative version of the duality based on the dual concepts of almost reducibility and almost localization,  which ultimately implied a  non-perturbative pure ac spectrum result {holds} for the $1D$ analytic Schr\"odinger operator with the  Diophantine frequency for all phase $x\in\T$. { If $0\leq \lambda<1$, Avila-Damanik \cite{AD08} proved the pure ac spectrum of the AMO for every irrational frequency and for a.e. $x\in\T.$  We also mention the work \cite{AFK11} in which the existence of ac spectrum is obtained for a $1D$ analytic Schr\"odinger operator with any irrational frequency. Remarkably, Avila \cite{Avi10,AviARC} even established the Almost Reducibility Conjecture and proved the pure ac spectrum for the analytic quasi-periodic Schr\"odinger operator in the subcritical regime.}

{Much less is known about the multi-frequency quasi-periodic Schr\"odinger operator. Based on arguments of \cite{FK09}, Bjkerl\"ov-Krikorian \cite{BKarxiv}  showed the existence of ac spectrum for a smooth multi-frequency quasi-periodic Schr\"odinger operator  without the smallness restriction on the potential.  Recently, Hou-Wang-Zhou \cite{HWZ20} proved the existence of ac spectrum for the analytic multi-frequency quasi-periodic Schr\"odinger operator with a Liouville frequency. Very recently, Cai \cite{Cai21} obtained the pure ac spectrum for the multi-frequency quasi-periodic Schr\"odinger operator with a finitely differentiable potential relying on the  almost reducibility results of  \cite{CCYZ}.}




\subsection{Lebesgue measure of the spectrum}
It is well-known that the spectrum of $H_{\lambda v,\omega,x}$ is independent of $x\in\T^d$   if $1$ and $\omega$ are rationally independent.  In this case we denote by $\Sigma_{\lambda v,\omega}$ the spectrum of $H_{\lambda v,\omega,x}$.
We have
\begin{thm}\label{cor1}
Let $v$ satisfy \eqref{gvc} and let $g$ be a non-constant analytic function. Then  for any $\varepsilon>0$,  there exists a $\lambda_0=\lambda_0(g,d,\gamma,\rho,\varepsilon)>0$ such that
the following holds: For $0\leq\lambda\leq \lambda_0$, there exists some $\Omega=\Omega(g,d,\gamma,\rho,\lambda)\subset \T^d$ with $\mathrm{mes}(\Omega)\leq \varepsilon$ such that, if $\omega\in\T^d\setminus\Omega$,  then
\begin{align*}
\mathrm{mes}(\Sigma_{\lambda v,\omega})\geq c>0,
\end{align*}
where $c=c(\lambda_0)$.
\end{thm}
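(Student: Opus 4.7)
The plan is to combine Aubry duality with the LDT/multi-scale localization machinery underlying Theorem \ref{mthm2} to produce, from the localized eigenvalues of the dual operator, a continuum of energies lying in $\Sigma_{\lambda v,\omega}$ and sweeping out a set of positive length. Substituting the ansatz $u_\ell=\sum_{n\in\Z^d}c_n\,e^{2\pi i\ell(n\cdot\omega+\theta)}$ into $H_{\lambda v,\omega,x}u=Eu$ and matching Fourier modes yields the dual eigenvalue problem for
\[
\widetilde H_{x,\theta}=\lambda\mathcal{T}_{v_x}+g(\theta+n\cdot\omega)\delta_{nn'}
\]
acting on $\ell^2(\Z^d)$, where $\theta\in\T$ is the dual phase and $v_x(y):=v(y+x)$ satisfies the same Gevrey bound \eqref{gvc} with the same $\rho,\gamma$. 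Thus $\widetilde H_{x,\theta}$ fits the framework of the paper's LDT: running that LDT together with the multi-scale analysis (the engine driving Theorem \ref{mthm2}), for $\lambda$ small and $\omega$ outside a set of measure $\le\varepsilon$, $\widetilde H_{x,\theta}$ has pure point spectrum with sub-exponentially (exponentially if $\gamma=1$) decaying eigenfunctions, uniformly in $(x,\theta)\in\T^d\times\T$.

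Transferring back, any dual eigenfunction $\{c_n\}$ at eigenvalue $E$ makes $u_\ell=\sum_n c_n\,e^{2\pi i\ell(n\cdot\omega+\theta)}$ absolutely convergent and uniformly bounded in $\ell$, and a direct computation gives $H_{\lambda v,\omega,x}u=Eu$. By Schnol's theorem a bounded generalized eigenfunction places $E\in\Sigma_{\lambda v,\omega,x}=\Sigma_{\lambda v,\omega}$ (the spectrum being $x$-independent when $(1,\omega)$ is rationally independent). To get positive Lebesgue measure, I would track the $\theta$-dependence of a single localized eigenvalue branch: standard first-order perturbation theory, valid on a set of $\theta$'s of measure at least $1-\varepsilon$ thanks to the Green's function control from the LDT, produces a branch of the form $E_0(x,\theta)=g(\theta)+O(\lambda)$, namely the branch concentrated near the diagonal site $n=0$. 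Since $g$ is a non-constant analytic function on $\T$, $g(\T)$ is a non-degenerate interval $[a,b]$, and the image of $\theta\mapsto E_0(x,\theta)$ contains, after discarding the $O(\varepsilon)$-measure bad set, a subset of $[a,b]$ of measure at least $(b-a)-C\lambda-C\varepsilon$. Taking $\lambda$ and $\varepsilon$ small enough yields $\mathrm{mes}(\Sigma_{\lambda v,\omega})\ge c(g)>0$.

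The main obstacle is the last step: justifying the perturbative formula $E_0(x,\theta)=g(\theta)+O(\lambda)$ uniformly in $\theta$ while keeping the resonant-$\theta$ exclusion set of small Lebesgue measure. This requires re-running the semi-algebraic and Cartan's estimates from the proof of Theorem \ref{mthm2} with careful book-keeping of the $\theta$-dependence, so that the projection of the localized eigenvalue curve onto the $E$-axis is not eaten up by exceptional $\theta$-sets.
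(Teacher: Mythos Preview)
Your reduction to the dual side is correct and matches the paper: one uses $\Sigma_{\lambda v,\omega}=\widetilde\Sigma$ and then argues on the $d$-dimensional Gevrey long-range operator $\widetilde H(\theta)$. The gap is in the last step, and it is not a technicality.

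The ``single branch'' plan $E_0(\theta)=g(\theta)+O(\lambda)$ cannot be made global. As $\theta$ varies, the diagonal entries $g(\theta+n\cdot\omega)$ pass through resonances $g(\theta)\approx g(\theta+n\cdot\omega)$; at each such $\theta$ the eigenvalue label near $n=0$ switches, and there is no continuous eigenvalue curve over all of $\T$ (even after removing a set of small measure) that stays within $O(\lambda)$ of $g(\theta)$. The LDT does not give you Green's function control uniformly in $E$; its bad set $X_N=X_N(E)$ depends on $E$, so you cannot fix a small $\theta$-exceptional set and then do first-order perturbation theory for the branch at $n=0$. Your final paragraph correctly identifies this as the obstacle, but ``re-running the semi-algebraic and Cartan's estimates'' is not a fix---it \emph{is} the proof, and it does not produce a single curve.

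What the paper actually does is a renormalization scheme (Lemmas~\ref{l61}--\ref{l62}). At scale $N$ one does not follow one eigenvalue; instead one uses that for each $\theta$ there is \emph{some} finite-volume eigenvalue $\lambda_{s_\star(\theta)}(\theta)$ of $\widetilde H_{N}(\theta)$ close to the previous-scale value, and that the correponding eigenvector is localized thanks to the sublinear-bad-box output of Theorem~\ref{claim}. Semi-algebraic arguments (Tarski--Seidenberg and Betti-number bounds) break $\T$ into at most $N^{C_1}$ intervals on each of which the index $s_\star$ is constant; one keeps all of these pieces and tracks the total measure of $\bigcup_{I'}E_{I'}(I')$, losing at most $1/N_s$ per step. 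Iterating along $N_{s+1}\sim e^{(\log N_s)^{10}}$ and intersecting gives a genuine subset of $\widetilde\Sigma$ of measure $\ge\frac12\,\mathrm{mes}(E_{I_0}(I_0))$. Finally, the lower bound on $\mathrm{mes}(E_{I_0}(I_0))$ is \emph{not} obtained from ``$g(\T)=[a,b]$'' as you suggest; it comes from a contradiction: if $E_{I_0}(I_0)$ were essentially a point $E_0$, then the LDT at an intermediate scale $M\sim(\log N_0)^{3/(2c_1)}$ forces $\|G_{N_0}(E_0;\theta_0)\|\le e^{2M^{\gamma/2}}$ for some $\theta_0\in I_0$, which is incompatible with the approximate-eigenvalue bound at scale $N_0$. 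This yields the quantitative $c=c(\lambda_0)\ge e^{-10(\log N_0)^{3\gamma/(4c_1)}}$.
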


\begin{rem}
 The study of the Lebesgue measure of  the spectrum for the quasi-periodic operator has a long history. The famous Aubry-Andr\'e conjecture \cite{AAA} states that the measure of the AMO is exactly $|4-4\lambda|$ for all frequency $\omega\in \R\setminus\mathbb{Q}$. Before \cite{AKA}, only partial results were obtained \cite{HSM,AMSC,LC,JKM1}. Remarkably, Avila-Krikorian \cite{AKA} settled this conjecture completely. We would also like to mention the  recent elegant work \cite{JK19}, where  a short new  proof of zero measure of the spectrum for the critical (i.e. $\lambda=1$)  AMO was given. If one considers the more general Schr\"odinger operator, there is no explicit representation of the measure of the spectrum. However, based on the LDT and semi-algebraic sets arguments, Bourgain \cite{BB} was able to prove that the Lebesgue measure of the spectrum for the $1D$  Schr\"odinger operator with a single-frequency and an analytic potential is strictly positive. Bourgain's result is non-perturbative. In the present we extend Bourgain's result to the multi-frequency operator with a Gevrey potential and a long-range hopping (but {perturbative}).
 \end{rem}



\subsection{Perturbative essentials}
As mentioned above,  our results and methods are {perturbative}. Actually, even in the $1D$ Gevrey perturbation case, only  perturbative results could be expected.  Due to the relatively lower regularity (resp. slower decaying) of the potential (resp. long-range hopping), it seems that only perturbative methods (such as the multi-scale analysis) are applicable. In fact,  the appropriate  estimates on the Green's functions are key to establish the above spectral results. We can restrict our consideration to the case $(n,\theta,\omega)\in\Z\times\T\times \T$. We denote by $\widetilde H_N(\theta)$ the restriction of $\widetilde H_{\lambda f,\omega,\theta}$ on $[-N,N]\subset \Z$. Following the non-perturbative techniques (without any inductive arguments) of \cite{BJI,BB}, the Green's function $G_N(E;\theta)=(\widetilde H_N(\theta)-E)^{-1}$ can be represented via the Cramer's rule as
\begin{align*}
G_N(E;\theta)(m,n)=\frac{\mathcal{M}_{m,n}}{\det(\widetilde H_N(\theta)-E)},
\end{align*}
where $\mathcal{M}_{m,n}$ is the $(m,n)$-minor of $\widetilde H_N(\theta)-E$. As in \cite{BJI,BB}, one may show
\begin{align*}
|\det(\widetilde H_N(\theta)-E)|\sim e^{N\int_{\T}\log|f(\theta)-E|\mathrm{d}\theta+o(\lambda)N}
\end{align*}
for $\theta$ being outside a set of measure at most $e^{-N^c},c\in(0,1)$.
Due to the sub-exponentially decaying of $\widehat v_n$, the best possible upper bound of $\mathcal{M}_{m,n}$ may be
\begin{align*}
|\mathcal{M}_{m,n}|\leq e^{-\rho|m-n|^\gamma+N\int_{\T}\log|f(\theta)-E|\mathrm{d}\theta+o(\lambda)N}.
\end{align*}
Consequently,
\begin{align*}
|G_N(E;\theta)(m,n)|\leq  e^{-\rho|m-n|^\gamma+o(\lambda)N}.
\end{align*}
In the case of $\gamma\in (0,1)$, no off-diagonal decay of $G_N(E;\theta)$ could be expected for $0<\lambda\leq\lambda_0$. This technical difficulty is the main motivation of the present paper to use  methods developed by Bourgain \cite{BG} and Jitomirskaya-Liu-Shi \cite{JLS}, which depend mainly on the multi-scale analysis. That of course will  lead to  perturbative results.
\subsection{Strategy of the proofs}
 We outline the proofs. First, we will prove the LDT for Green's functions of $\widetilde H_{\lambda f,\omega,\theta}$. This  depends on the multi-scale analysis developed in \cite{BG,JLS}. The matrix-valued Cartan's estimates and semi-algebraic geometry arguments play essential roles in this step. 
In  \cite{JLS}  the authors considered the multi-dimensional quasi-periodic operator with the exponentially decaying long-range hopping (which deals with the more complicated $b$-frequency setting). It turns out that the Gevrey long-range hopping case needs to improve some arguments  of  \cite{JLS}:
 \begin{itemize}
 \item[$\bullet$] In the proof of the resolvent identity (see the Appendix for details) it needs the off-diagonal decaying speeds of the Green's functions to depend on the Gevrey index $\gamma$.  In the proof of the LDT it also needs to give more delicate estimates on various parameters. The key idea is to remove more $\theta$ in some sense when establishing the LDT . This depends sensitively on the Gevrey index $\gamma$ as well.
 \item[$\bullet$]Furthermore, the sub-linear growth property in our setting becomes more precise, which heavily relies  on $\gamma$.
 \end{itemize}

 To prove the pure point spectrum (i.e. Theorem \ref{mthm2}), it suffices to eliminate the energy in LDT and then apply the Shnol's Theorem. This will be finished by using semi-algebraic sets arguments (including Yomdin-Gromov triangulation Theorem) as in \cite{BG}.

  To prove the absence of point spectrum (i.e. Theorem \ref{mthm1}), we will combine the LDT  with a trick originated from Delyon \cite{Del87}.  In contrast with \cite{EA,KJ,KJ1}, our result holds without any \textit{transversality} restriction  on the Gevrey potential. The proofs of \cite{KJ,KJ1} dealt with the Schr\"odinger operator with a Gevrey potential directly. To prove the LDT, Klein performed an inductive scheme as in \cite{BGA,BGS01} and  needed the \textit{transversality} condition of the potential to guarantee the validity of the initial step (or a {\L}ojasiewicz type inequality). Instead, in the present we  establish the LDT for  the Aubry dual operator of \eqref{qps}. It turns out this  operator is actually a multi-dimensional quasi-periodic operator with an analytic potential and a Gevrey  long-range {hopping}.

 To prove the spectrum has positive measure (i.e. Theorem \ref{cor1}), we will use a renormalization scheme of Bourgain \cite{BB} relying on the complexity estimates.  In \cite{BB} Bourgain directly applied the LDT of \cite{BGA} together with semi-algebraic sets arguments (including Tarski-Seidenberg principle and bounds on the Betti numbers) to construct sufficiently many approximate eigenvalues.  However, for  the Schr\"odinger operator with a Gevrey potential, the only known LDTs were proved by Klein \cite{KJ,KJ1}, but require the potential to satisfty the \textit{transversality} condition. Moreover, Klein's methods seem invalid in the long-range case. To overcome these difficulties, we again use the powerful Aubry duality. Precisely, by {the} well-known result (see \cite{Pui06,JKM}), we have $\Sigma_{\lambda v,\omega}=\widetilde{\Sigma}$, here $\widetilde{\Sigma}$ denotes the spectrum of the Aubry duality of \eqref{qps}. It turns out this Aubry duality is a multi-dimensional Gevrey long-range operator with an analytic potential. Bourgain \cite{BB} claimed that his arguments  remain valid  for the long-range operator in the $1D$ and single-frequency case once the LDT was established. In this paper we extend Bourgain's method to the multi-dimensional  case.

\subsection{The structure of this paper}
The structure of the paper is as follows. Some preliminaries are introduced in \S2. The LDT is established in \S3. In \S4, \S5 and \S6, we finish the proof of Theorems \ref{mthm2}, \ref{mthm1} and \ref{cor1}, respectively. Some key estimates are included in the Appendix.

\section{Preliminaries}

\subsection{The notations} Let $a>0,b>0$. We define $a\lesssim b$ (resp. $a\ll b$) if there is some $\varepsilon>0$ (resp. small $\varepsilon>0$)  so that $a\leq \varepsilon b$. We write $a\sim b$ if $a\lesssim b$ and $b\lesssim a$. We write $a\pm$ to denote $a\pm\varepsilon$ for some small $\varepsilon>0$.

For any $x\in\mathbb{R}^d$, let $|x|=\max\limits_{1\leq i\leq d}|x_i|$. For $\Lambda\subset\mathbb{R}^d$,  we introduce
$$\mathrm{diam}(\Lambda)=\sup_{n,n'\in \Lambda}|n-n'|, \ \mathrm{dist}(m,\Lambda)=\inf_{n\in \Lambda}|m-n|.$$


For $\theta\in\R^d$ and $1\leq j\leq d$, let ${\theta}_j^\neg=(\theta_1,\cdots,\theta_{j-1},\theta_{j+1}\cdots,\theta_d)\in\R^{d-1}$.

For $x\in\mathbb{R}^{d_1}$ and $\emptyset\neq X\subset\mathbb{R}^{d_1+d_2}$,  define the $x$-section of $X$ to be
$$X(x)=\{y\in\mathbb{R}^{d_2}:\  (x,y)\in X\}.$$ For example, $X({\theta}_j^\neg)=\{\theta_j\in\T:\ (\theta_j,\theta_j^\neg)\in X\}$ if $\emptyset \neq X\subset\T^d$.

For $x\in\R$, we denote by $[x]$ its integer part.

Throughout this paper, we assume $\rho\in (0,1)$ for simplicity.

\subsection{Some facts on semi-algebraic sets}
\begin{defn}[Chapter 9, \cite{BB}]
A set $\mathcal{S}\subset \mathbb{R}^n$ is called a semi-algebraic set if it is a finite union of sets defined by a finite number of polynomial equalities and inequalities. More precisely, let $\{P_1,\cdots,P_s\}\subset\mathbb{R}[x_1,\cdots,x_n]$ be a family of real polynomials whose degrees are bounded by $d$. A (closed) semi-algebraic set $\mathcal{S}$ is given by an expression
\begin{equation}\label{smd}
\mathcal{S}=\bigcup\limits_{j}\bigcap\limits_{\ell\in\mathcal{L}_j}\left\{x\in\mathbb{R}^n: \ P_{\ell}(x)\varsigma_{j\ell}0\right\},
\end{equation}
where $\mathcal{L}_j\subset\{1,\cdots,s\}$ and $\varsigma_{j\ell}\in\{\geq,\leq,=\}$. Then we say that $\mathcal{S}$ has degree at most $sd$. In fact, the degree of $\mathcal{S}$ which is denoted by $\deg(\mathcal{S})$, is the  smallest $sd$ over all representations as in $(\ref{smd})$.
\end{defn}

\begin{lem}[Tarski-Seidenberg Principle, \cite{BB}]\label{tsp}
Denote by $(x,y)\in\mathbb{R}^{d_1+d_2}$ the product variable. If $\mathcal{S}\subset\mathbb{R}^{d_1+d_2}$ is semi-algebraic of degree $B$, then its projections $\mathrm{Proj}_x\mathcal{S}\subset\mathbb{R}^{d_1}$ and
 $\mathrm{Proj}_y\mathcal{S}\subset\mathbb{R}^{d_2}$ are semi-algebraic of degree at most $B^{C}$, where $C=C(d_1,d_2)>0$.
\end{lem}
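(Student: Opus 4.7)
The plan is to establish the principle by iterated quantifier elimination, reducing the general case to elimination of a single variable and then exhibiting an explicit semi-algebraic description of the projection with a controlled degree. By symmetry it suffices to handle $\mathrm{Proj}_x\mathcal{S}$, and by induction on $d_2$ the crux is the base case $d_2=1$: given $\mathcal{S}\subset\mathbb{R}^{d_1}\times\mathbb{R}$ defined by a Boolean combination of sign conditions $P_i(x,y)\,\varsigma_i\,0$ where $P_1,\ldots,P_s\in\mathbb{R}[x,y]$ have total degree at most $d$, I aim to show that $\{x\in\mathbb{R}^{d_1}:\exists\,y\in\mathbb{R},\ (x,y)\in\mathcal{S}\}$ is semi-algebraic with degree bounded polynomially in $sd$.

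The key observation is that for fixed $x$ the real $y$-line decomposes into intervals on which the sign vector $(\sgn P_1(x,\cdot),\ldots,\sgn P_s(x,\cdot))$ is constant, and whether $x$ lies in the projection depends only on the set of sign vectors realized on some such interval. The combinatorial type of this decomposition can change only across the hypersurface in $x$-space on which one of the following vanishes: (i) the leading coefficient in $y$ of some $P_i$ (a root escapes to infinity), (ii) the discriminant in $y$ of some $P_i$ (two roots of $P_i$ collide), or (iii) the resultant in $y$ of some pair $P_i,P_j$ (roots of distinct polynomials coincide). Each of these is a polynomial in $x$ of degree $O(d^2)$, and there are $O(s^2)$ of them. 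On each cell of the complementary stratification of $\mathbb{R}^{d_1}$ the root ordering and sign pattern on the $y$-line are locally constant, so membership in $\mathrm{Proj}_x\mathcal{S}$ is determined by sign conditions on a further auxiliary family built from subresultants (Thom encoding), whose degrees are again controlled polynomially in $d$.

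Assembling these ingredients, $\mathrm{Proj}_x\mathcal{S}$ is expressible as a finite Boolean combination of sign conditions on a family of polynomials of cardinality $\mathrm{poly}(s,d)$ and degree $\mathrm{poly}(d)$ in $x$, which yields a bound $(sd)^{C_1(d_1)}$ on its semi-algebraic degree. Iterating this one-variable elimination $d_2$ times and composing the resulting polynomial blow-ups produces the claimed bound $B^{C(d_1,d_2)}$. The main obstacle is bookkeeping: one must verify uniformly across all cells of the stratification that the sign-determination step can be expressed within the stated degree bounds, and that the composition of $d_2$ polynomial blow-ups remains polynomial rather than becoming tower-exponential. This is precisely what effective quantifier-elimination frameworks such as Collins' cylindrical algebraic decomposition, or the refined algorithms of Basu--Pollack--Roy, are designed to guarantee, and I would invoke them rather than reprove the constants; the paper only needs the qualitative bound $B^C$, not the sharp exponent.
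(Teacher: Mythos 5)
The paper does not prove this lemma at all: it is quoted verbatim from Bourgain's monograph \cite{BB} (which in turn rests on the quantitative real algebraic geometry literature), so there is no ``paper proof'' to compare against. Your sketch is the standard effective quantifier-elimination argument --- eliminate one variable at a time, stratify the $x$-space by leading coefficients, discriminants and resultants, and determine sign patterns on the fibers via subresultants/Thom encodings --- and as an outline it is correct and consistent with how the bound is actually established in the literature. Note, however, that the decisive quantitative step (that the sign-determination over all strata stays within degree $\mathrm{poly}(s,d)$, uniformly) is exactly the content of the effective Tarski--Seidenberg theorem, and you ultimately invoke Collins' CAD or Basu--Pollack--Roy for it; so your argument is a proof modulo citation, which is on par with, not stronger than, what the paper itself does.

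Two smaller remarks. First, your worry about the composition of $d_2$ one-variable eliminations becoming ``tower-exponential'' is moot for the statement at hand: each elimination raises the degree from $B$ to at most $B^{c}$ with $c=c(d_1)$, so $d_2$ iterations give $B^{c^{d_2}}$, and since the constant $C$ in the lemma is allowed to depend on both $d_1$ and $d_2$, this is already the claimed bound; no refined algorithm is needed for that point. Second, in the fiber decomposition you should include the root points themselves, not only the open intervals between them: when the sign conditions defining $\mathcal{S}$ involve equalities, membership of $x$ in the projection may be witnessed only at an isolated root $y$ of some $P_i(x,\cdot)$, so the relevant combinatorial datum is the list of sign vectors realized on all cells (points and intervals) of the fiber. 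The Thom-encoding machinery you invoke handles this automatically, but as written your reduction ``depends only on the sign vectors realized on some such interval'' is slightly too coarse.
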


\begin{lem}[\cite{BB}]\label{btb}
Let $\mathcal{S}\subset \R^d$ be a semi-algebraic set of degree $B$. Then the sum of all Betti numbers of $\mathcal{S}$ is bounded by $B^{C}$, where $C=C(d)>0$.
\end{lem}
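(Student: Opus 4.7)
The plan is to invoke the classical Oleinik--Petrovsky--Thom--Milnor estimate for real algebraic varieties and reduce the semi-algebraic case to it via perturbation and slack variables, in the form refined by Basu--Pollack--Roy. From the definition of degree, $\mathcal{S}$ is a Boolean combination of sign conditions on a family $\{P_1,\ldots,P_s\}\subset \R[x_1,\ldots,x_d]$ with $s\le B$ and $\max_i\deg P_i\le B$, so the goal is to bound $\sum_i b_i(\mathcal{S})$ polynomially in $s$ and $\max_i\deg P_i$ with exponent only depending on $d$.

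First I would record the purely algebraic bound: for $Q\in\R[y_1,\ldots,y_n]$ of degree $\le\Delta$, the variety $V=\{Q=0\}\subset\R^n$ satisfies $\sum_i b_i(V)\le \Delta(2\Delta-1)^{n-1}$. This comes from Morse theory applied to a generic linear height function on a small analytic perturbation of $V$, together with a B\'ezout count of the critical points. An intersection $\{P_1=\cdots=P_k=0\}$ is absorbed into this estimate by replacing it with $\{\sum P_i^2=0\}$, of degree at most $2\max_i\deg P_i$.

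Next I would handle a basic closed semi-algebraic set $\mathcal{B}=\bigcap_{i=1}^s\{P_i\ge 0\}\subset\R^d$ by the slack-variable trick: form $\mathcal{W}=\{(x,y)\in\R^{d+s}:\ P_i(x)=y_i^2,\ 1\le i\le s\}$, so that the projection $\pi:\mathcal{W}\to\mathcal{B}$ is surjective and is a $2^s$-fold covering over the interior, with ramification confined to $\partial\mathcal{B}$. A Leray/Vietoris--Begle comparison then bounds $\sum b_i(\mathcal{B})$ in terms of $\sum b_i(\mathcal{W})$, and $\mathcal{W}$ is a real algebraic set in $\R^{d+s}$ of controlled degree to which the previous step applies. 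Strict sign conditions $P_i>0$ are reached by perturbing $P_i\ge 0$ to $P_i\ge \ep$ and letting $\ep\to 0^+$; generic choice of $\ep$ produces a smooth variety whose homotopy type stabilizes.

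Finally, the general $\mathcal{S}$ as in (\ref{smd}) is a finite union of such basic closed pieces, and Mayer--Vietoris bounds its total Betti number by a sum of Betti numbers of intersections of the basic pieces (each of which is itself a basic closed set governed by the previous step). The main obstacle is that a naive enumeration of sign patterns yields an unacceptable $2^B$ blow-up in the combinatorics; the remedy, and the technical heart of the proof, is the Basu--Pollack--Roy cylindrical stratification, which packages the sign patterns into polynomially many cells, each with individually controlled Betti contribution, and produces the desired bound of the form $(sD)^{C(d)}\le B^{C(d)}$ with the exponent depending only on $d$.
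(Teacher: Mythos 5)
The paper itself contains no argument for this lemma: it is quoted from Bourgain's book \cite{BB} (Chapter 9), which in turn rests on the quantitative real algebraic geometry literature (Oleinik--Petrovsky--Thom--Milnor, Warren, Basu--Pollack--Roy). So your sketch can only be measured against that standard proof, and its first ingredient --- Milnor's bound for a real algebraic set, with $\{P_1=\cdots=P_k=0\}$ replaced by $\{\sum_i P_i^2=0\}$ --- is correctly recalled.

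There is, however, a genuine quantitative gap in your second step. Passing from $\mathcal{B}=\bigcap_{i=1}^s\{P_i\ge 0\}\subset\R^d$ to $\mathcal{W}=\{(x,y):P_i(x)=y_i^2\}\subset\R^{d+s}$ raises the ambient dimension to $d+s$, so Milnor's estimate applied to $\mathcal{W}$ gives a bound of order $\Delta(2\Delta-1)^{d+s-1}$, i.e.\ exponential in the number $s$ of polynomials. In the lemma, $B$ bounds the product of $s$ and the degree, so $s$ can be comparable to $B$, and this route produces something like $B^{d+B}$ rather than $B^{C(d)}$ --- precisely the blow-up the lemma must exclude. You acknowledge a $2^B$-type blow-up only at the final union/Mayer--Vietoris stage, but it already occurs here, and the remedy is not ``cylindrical stratification'': CAD cell counts are doubly exponential in $d$ (tolerable since $C(d)$ is arbitrary, but then one must still convert a cell decomposition into a Betti-number bound, e.g.\ via effective semi-algebraic triangulation, and Milnor plus slack variables play no role). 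The polynomial-in-$(s\Delta)$ bound for closed sets is Basu's theorem, whose proof combines infinitesimal general-position perturbation with Mayer--Vietoris--type inequalities in which only intersections of at most $i+1$ of the pieces contribute to the $i$-th Betti number; since Betti numbers above degree $d$ vanish, the $2^s$ sign-pattern count is replaced by roughly $s^{d+1}$ terms. As written, then, your outline is a correct pointer to the literature, but the step that actually yields the exponent $C(d)$ is exactly the one you defer to Basu--Pollack--Roy, so it does not stand as a proof. A minor simplification: in the paper's Definition of a semi-algebraic set only the relations $\ge$, $\le$, $=$ occur, so $\mathcal{S}$ is closed and your limiting argument $P_i\ge\ep$, $\ep\to 0^+$, for strict inequalities is unnecessary.
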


\begin{lem}[\cite{BG}]\label{proj}
Let $\mathcal{S}\subset[0,1]^{d=d_1+d_2}$ be a semi-algebraic set of degree $\deg(\mathcal{S})=B$ and $\mathrm{mes}_d(\mathcal{S})\leq\eta$, where
\begin{equation*}
\log B\ll \log\frac{1}{\eta}.
\end{equation*}
Denote by $(x_1,x_2)\in[0,1]^{d_1}\times[0,1]^{d_2}$ the product variable. Suppose
$$ \eta^{\frac{1}{d}}\leq\varepsilon.$$
Then there is a decomposition of $\mathcal{S}$ as
$$\mathcal{S}=\mathcal{S}_1\cup\mathcal{S}_2$$
with the following properties. The projection of $\mathcal{S}_1$ on $[0,1]^{d_1}$ has small measure
$$\mathrm{mes}_{d_1}(\mathrm{Proj}_{x_1}\mathcal{S}_1)\leq B^{C(d)}\varepsilon,$$
and $\mathcal{S}_2$ has the transversality property
$$\mathrm{mes}_{d_2}(\mathcal{L}\cap \mathcal{S}_2)\leq B^{C(d)}\varepsilon^{-1}\eta^{\frac{1}{d}},$$
where $\mathcal{L}$ is any $d_2$-dimensional hyperplane in $[0,1]^d$ s.t.,
$$\max\limits_{1\leq j\leq d_1}|\mathrm{Proj}_\mathcal{L}(e_j)|<{\varepsilon},$$
where we denote by $e_1,\cdots,e_{d_1}$ the $x_1$-coordinate vectors.
\end{lem}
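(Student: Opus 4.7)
The plan is to invoke the Yomdin--Gromov smooth parametrization theorem for semi-algebraic sets, reducing $\mathcal{S}$ to a controlled number of smooth atoms, and then sort the atoms between $\mathcal{S}_1$ and $\mathcal{S}_2$ according to their orientation relative to the $x_1$-direction.

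First I would produce a decomposition $\mathcal{S}=\bigcup_\alpha P_\alpha$ with atom count $\#\{\alpha\}\leq B^{C(d)}$, where each atom is the image $P_\alpha=\Phi_\alpha([0,1]^{k_\alpha})$ of a semi-algebraic $C^1$-map $\Phi_\alpha\colon[0,1]^{k_\alpha}\to[0,1]^d$ of dimension $0\leq k_\alpha\leq d$ with $\|\Phi_\alpha\|_{C^1}\leq 1$. Writing $\Phi_\alpha=(\Phi_\alpha^{(1)},\Phi_\alpha^{(2)})$ according to the splitting $[0,1]^d=[0,1]^{d_1}\times[0,1]^{d_2}$, the key quantity is the $x_1$-Jacobian $d\Phi_\alpha^{(1)}$. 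At the cost of a further $B^{C(d)}$ factor in the atom count, one may refine the parametrization so that on each atom $\|d\Phi_\alpha^{(1)}\|$ is either uniformly at most $\varepsilon$ (call the atom \emph{flat}) or uniformly greater than $\varepsilon$ (call it \emph{steep}).

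Next I would set $\mathcal{S}_1:=\bigcup_{\alpha\text{ flat}}P_\alpha$ and $\mathcal{S}_2:=\bigcup_{\alpha\text{ steep}}P_\alpha$. The projection bound on $\mathcal{S}_1$ is immediate: a flat atom satisfies $|\Phi_\alpha^{(1)}(t)-\Phi_\alpha^{(1)}(t')|\leq\varepsilon|t-t'|\leq\varepsilon\sqrt{d}$ for $t,t'\in[0,1]^{k_\alpha}$, so $\mathrm{Proj}_{x_1}(P_\alpha)$ lies in a ball of radius $C(d)\varepsilon$ in $[0,1]^{d_1}$ and therefore has $d_1$-measure at most $C(d)\varepsilon^{d_1}\leq C(d)\varepsilon$; summing over the $\leq B^{C(d)}$ flat atoms yields $\mathrm{mes}_{d_1}(\mathrm{Proj}_{x_1}\mathcal{S}_1)\leq B^{C(d)}\varepsilon$ as required. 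The transversality bound on $\mathcal{S}_2$ is the more delicate part. For a steep atom $P_\alpha$ (which forces $k_\alpha\geq d_1$) and a $d_2$-plane $\mathcal{L}$ with $\max_j|\mathrm{Proj}_\mathcal{L}(e_j)|<\varepsilon$, observe first that $\mathcal{L}$ is nearly orthogonal to the $x_1$-axes, so $\mathrm{Proj}_{x_1}\mathcal{L}\cap [0,1]^{d_1}$ has $d_1$-diameter bounded by $C(d)\varepsilon$; the intersection $\mathcal{L}\cap P_\alpha$ is therefore controlled by the pre-image $(\Phi_\alpha^{(1)})^{-1}(\mathrm{Proj}_{x_1}\mathcal{L}\cap\mathrm{Proj}_{x_1}P_\alpha)\subset[0,1]^{k_\alpha}$. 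The steepness $\|d\Phi_\alpha^{(1)}\|>\varepsilon$ provides at least one expanding direction of factor $\geq\varepsilon$ for $\Phi_\alpha^{(1)}$; combining this, via the coarea formula, with the global bound $\mathrm{mes}(P_\alpha)\leq\mathrm{mes}(\mathcal{S})\leq\eta$, one obtains $\mathrm{mes}_{d_2}(\mathcal{L}\cap P_\alpha)\leq C(d)\varepsilon^{-1}\eta^{1/d}$, and summing over the steep atoms produces the $B^{C(d)}$ prefactor.

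The main obstacle will be the transversality estimate, specifically the bookkeeping required to interleave the three ingredients (the steepness $>\varepsilon$, the near-verticality of $\mathcal{L}$, and the total measure $\leq\eta$) through the coarea/implicit-function identity to extract precisely the factor $\varepsilon^{-1}\eta^{1/d}$ for each atom. A secondary technical point is to verify that the refinement producing the flat/steep dichotomy does not blow up the complexity beyond $B^{C(d)}$, and to treat the intermediate-dimensional atoms ($d_1\leq k_\alpha<d$) uniformly with the full-dimensional ones in the coarea bound. Lower-dimensional atoms with $k_\alpha<d_1$ have vanishing $d_1$-projection measure and can safely be absorbed into $\mathcal{S}_1$ without affecting either bound.
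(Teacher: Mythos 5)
The paper itself does not prove this lemma: it is quoted from Bourgain \cite{BG}, and the argument there is organized differently from yours in a way that turns out to be essential. Bourgain uses the hypothesis $\mathrm{mes}_d(\mathcal{S})\leq\eta$ only through the elementary observation that $\mathcal{S}$ is contained in the $C\eta^{1/d}$-neighbourhood of $\partial\mathcal{S}$; the Yomdin--Gromov parametrization is then applied to the $(d-1)$-dimensional set $\partial\mathcal{S}$, and the $B^{C}$ cells are sorted by the direction of their unit normal $\nu$: if $\nu$ is within $\sim\varepsilon$ of the $x_1$-coordinate plane, the restriction of $\mathrm{Proj}_{x_1}$ to the cell is degenerate and its projection has measure $\lesssim\varepsilon$ (this side builds $\mathcal{S}_1$), while if $|\mathrm{Proj}_{x_2}\nu|\gtrsim\varepsilon$ every admissible plane $\mathcal{L}$ crosses the $\eta^{1/d}$-thickened cell at angle $\gtrsim\varepsilon$, so the intersection has $d_2$-measure $\lesssim\eta^{1/d}/\varepsilon$. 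Thus the factor $\varepsilon^{-1}\eta^{1/d}$ is ``slab thickness divided by crossing angle,'' not something extracted from a coarea identity on full-dimensional pieces.

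Your flat/steep dichotomy based on $\|d\Phi_\alpha^{(1)}\|$ is not the right invariant, and this is a genuine gap, not bookkeeping. The condition $\|d\Phi_\alpha^{(1)}\|>\varepsilon$ only says that \emph{some} tangent direction of the atom has nontrivial $x_1$-motion; it neither prevents $\mathrm{Proj}_{x_1}$ from being degenerate on the atom nor prevents the atom from containing a large piece of an admissible plane. Concretely, take $d=3$, $d_1=2$, $d_2=1$ and the atom $P=\Phi([0,1]^2)$ with $\Phi(t_1,t_2)=(2\varepsilon t_1,0,t_2)$, i.e.\ the vertical rectangle $\{(s,0,t):0\leq s\leq 2\varepsilon,\ 0\leq t\leq1\}$; one may take $\mathcal{S}$ to be essentially this rectangle, so $\mathrm{mes}_3(\mathcal{S})\leq\eta$ for every $\eta$, with bounded degree. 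Your rule declares this atom steep ($\|d\Phi^{(1)}\|=2\varepsilon>\varepsilon$) and places it in $\mathcal{S}_2$, yet the line $\mathcal{L}=\{(a,0,t):t\in\R\}$ with $0\leq a\leq2\varepsilon$ is admissible ($\mathrm{Proj}_{\mathcal{L}}e_1=\mathrm{Proj}_{\mathcal{L}}e_2=0$) and meets it in a segment of $d_2$-measure $1\gg B^{C}\varepsilon^{-1}\eta^{1/3}$; the correct decomposition must put this set into $\mathcal{S}_1$, since its $x_1$-projection has measure $0$. Relatedly, your proposed source of the transversality bound --- coarea combined with $\mathrm{mes}(P_\alpha)\leq\eta$ --- is vacuous exactly for atoms of dimension $k_\alpha<d$, whose $d$-dimensional measure is $0$ irrespective of $\eta$, and these are the atoms on which the estimate can fail; even for full-dimensional atoms the step is only asserted, and you flag it yourself as the main obstacle. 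To repair the argument you need to classify pieces by the degeneracy of $\mathrm{Proj}_{x_1}$ restricted to their tangent planes (equivalently, by the normal direction for the codimension-one pieces), and feed the measure hypothesis in through the neighbourhood-of-the-boundary reduction, as in \cite{BG}.
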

In \cite{BG}, Bourgain proved a  result for eliminating multi-variables.
\begin{lem}[Lemma 1.18, \cite{BG}]\label{svl}
Let $\mathcal{S}\subset [0,1]^{d+r}$ be a semi-algebraic set of degree $B$ and such that
$$\mathrm{mes}_d(\mathcal{S}(y))<\eta\ \mathrm{for}\ \forall\  y\in [0,1]^r.$$
Then the set
\begin{equation*}
\left\{(x_1,\cdots,x_{2^r})\in [0,1]^{d2^r}:\  \bigcap\limits_{1\leq i\leq 2^r}\mathcal{S}(x_i)\neq\emptyset\right\}
\end{equation*}
is semi-algebraic of degree at most $B^{C}$ and measure at most
\begin{equation*}
B^{C}\eta^{d^{-r}2^{-r(r-1)/2}},
\end{equation*}
where $C=C(d,r)>0$.
\end{lem}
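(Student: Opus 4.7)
My plan is to induct on $r$, using Tarski--Seidenberg (Lemma~\ref{tsp}) to control the semi-algebraic degree at each step and Lemma~\ref{proj} as the quantitative tool for converting measure bounds into projection bounds.

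The degree claim is immediate: the target set is the $x$-projection of the visibly semi-algebraic set
$$\widetilde{\mathcal{S}}_r:=\left\{(x_1,\dots,x_{2^r},y)\in[0,1]^{d2^r+r}:\ (x_i,y)\in\mathcal{S},\ i=1,\dots,2^r\right\},$$
whose degree is at most $2^rB$ as the intersection of $2^r$ lifted copies of $\mathcal{S}$; Lemma~\ref{tsp} then bounds the degree of its projection by $B^{C(d,r)}$.

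For the measure bound I set $\alpha_r:=d^{-r}2^{-r(r-1)/2}$ and prove $\mathrm{mes}(\mathcal{U}_r)\leq B^{C}\eta^{\alpha_r}$ by induction on $r$. The base case $r=1$ reduces to bounding the $x$-projection of $\widetilde{\mathcal{S}}_1\subset[0,1]^{2d+1}$, whose total measure is $<\eta^2$ by Fubini and the hypothesis $\mathrm{mes}_d(\mathcal{S}(y))<\eta$; Lemma~\ref{proj} with $(d_1,d_2)=(2d,1)$ and $\varepsilon$ chosen to balance the two components of the decomposition yields the required $\eta^{1/d}$ bound. For the inductive step $r\mapsto r+1$, I split $y=(y',y'')\in[0,1]^r\times[0,1]$, freeze $y''$, and apply the inductive hypothesis to the slice $\mathcal{S}^{y''}\subset[0,1]^{d+r}$ to obtain, for each $y''$, a set
$$\mathcal{T}^{y''}:=\left\{(x_1,\dots,x_{2^r}):\exists y',(x_i,y',y'')\in\mathcal{S}\ \forall i\right\}$$
of measure at most $B^{C}\eta^{\alpha_r}$. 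The key combinatorial observation is that any admissible $(2^{r+1})$-tuple splits into two $2^r$-subtuples sharing the \emph{same} $y''$, so $\mathcal{U}_{r+1}$ is contained in the $x$-projection of the doubled set
$$\widetilde{\mathcal{T}}:=\left\{(x_1,\dots,x_{2^{r+1}},y''):\ (x_1,\dots,x_{2^r}),\,(x_{2^r+1},\dots,x_{2^{r+1}})\in\mathcal{T}^{y''}\right\}\subset[0,1]^{d2^{r+1}+1},$$
which has total measure $\leq B^{2C}\eta^{2\alpha_r}$ by Fubini. A final application of Lemma~\ref{proj} with $(d_1,d_2)=(d2^{r+1},1)$, after careful choice of $\varepsilon$, produces a projection bound of the form $B^{C'}\eta^{\alpha_{r+1}}$; the crucial arithmetic identity $2\alpha_r/(d\cdot 2^{r+1})=\alpha_{r+1}$ (equivalently $-r(r-1)/2-r=-r(r+1)/2$) ensures the exponent is exactly the claimed one.

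The main obstacle is that Lemma~\ref{proj} only controls the $x$-projection of the $\mathcal{S}_1$-piece of its decomposition directly; the transversal $\mathcal{S}_2$-piece has only small vertical $y$-sections, which \emph{a priori} does not imply a small $x$-projection. My plan is to handle this either by iterating Lemma~\ref{proj} once more on $\mathcal{S}_2$ (with the coordinate roles partially exchanged, so that the vertical-section bound is converted into a projection bound on the next decomposition), or by invoking the Yomdin--Gromov triangulation theorem together with the Betti-number estimate (Lemma~\ref{btb}) to control directly the measure of the projection of a semi-algebraic set in terms of its total measure and degree. Keeping the $\varepsilon$-choices at each iteration consistent with the overall exponent bookkeeping, while absorbing all dimension- and $r$-dependent constants into the $B^C$ factor, is the most delicate and labor-intensive part of the argument.
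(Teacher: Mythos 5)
Your degree bound (lift, intersect $2^r$ copies, project, apply Lemma~\ref{tsp}) is fine, and your induction skeleton is sound: the containment of $\mathcal{U}_{r+1}$ in the projection of the doubled set over the shared last coordinate, together with the arithmetic $\alpha_{r+1}=\alpha_r/(d2^r)$, correctly reduces everything to the one-parameter, two-copy case. But that one-parameter case is the entire content of the lemma, and your treatment of it has a genuine gap. After Fubini you only retain ``total measure of $\widetilde{\mathcal{S}}_1$ (resp. $\widetilde{\mathcal{T}}$) is small, degree is $B^C$'' and then try to bound the measure of its $x$-projection. No such bound exists: the set $\{(x,t)\in[0,1]^{D}\times[0,1]:\ |x_1-t|\le\eta\}$ has degree $O(1)$, measure $O(\eta)$, and its $x$-projection is all of $[0,1]^{D}$. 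This is exactly why Lemma~\ref{proj} is structured the way it is -- it controls the projection only of the piece $\mathcal{S}_1$, while the transversal piece $\mathcal{S}_2$ (into which the slab above falls) can project onto everything -- so your first fallback (iterating Lemma~\ref{proj} with roles exchanged) has no mechanism to close the argument, and your second fallback (a Yomdin--Gromov/Betti-number principle ``measure of projection controlled by total measure and degree'') is simply false. Even setting this aside, the proposed balancing in the base case gives $\varepsilon\sim\eta^{1/(2d+1)}$, i.e. a bound $B^C\eta^{1/(2d+1)}$, which falls short of the claimed $\eta^{1/d}$ and would propagate to a weaker exponent than $d^{-r}2^{-r(r-1)/2}$.

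What the projection step actually requires is re-using the per-slice hypothesis, not just the total measure: the sets you are intersecting are one-dimensional semi-algebraic sections $\mathcal{T}(X_1),\mathcal{T}(X_2)\subset[0,1]$, each a union of at most $B^{C}$ intervals, and if they meet then an endpoint of an interval of one lies in (the closure of) the other; endpoints are roots of the defining polynomials along the fiber, and for each \emph{fixed} value $t$ of the parameter the set of $X$ with $t\in\mathcal{T}(X)$ has measure at most the slice bound, so a Fubini argument over the at most $B^{C}$ root branches controls the bad set. Some device of this kind (this is the role the doubled product structure $\mathcal{T}^{y''}\times\mathcal{T}^{y''}$ plays, which your reduction retains but your tools never exploit) is indispensable. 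Note also that the paper itself gives no proof of this statement -- it is quoted verbatim as Lemma 1.18 of \cite{BG} -- so the relevant comparison is with Bourgain's argument there, whose crux is precisely this elimination of the single shared parameter; as it stands, your proposal identifies the obstacle but does not overcome it.
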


\begin{lem}[Lemma 1.20, \cite{BG}]\label{rmf}
Let $\mathcal{S}\subset [0,1]^{dr}$ be a semi-algebraic set of degree $B$ and $\mathrm{mes}(\mathcal{S})<\eta$ with $\eta>0.$

For  $\omega= (\omega_1,\cdots,\omega_r)\in[0,1]^{r}$ and $ n=(n_1,\cdots,n_r)\in\mathbb{Z}^r$, define
$$n\omega=(n_1\omega_1,\cdots, n_r\omega_r).$$

For any $C>1$, define  $\mathcal{N}_1,\cdots,\mathcal{N}_{d-1}\subset \mathbb{Z}^r$ to be finite sets with the following property:
$$\min\limits_{1\leq s\leq r}|n_s|>(B\max\limits_{1\leq s\leq r}|m_s|)^C, $$
where $n\in \mathcal{N}_{i}, m\in\mathcal{N}_{i-1}\  (2\leq i\leq d-1)$.

 Then there is some $C=C(r,d)>0$ such that for
$\max\limits_{n\in\mathcal{N}_{d-1}}|n|^C<\frac{1}{\eta},$
one has
\begin{align*}
&\ \mathrm{mes}(\{\omega\in [0,1]^{r}:\  \exists \ n^{(i)}\ \in\mathcal{N}_i\  s.t.,\  (\omega,n^{(1)}\omega,\cdots,n^{(d-1)}\omega)\mod\mathbb{Z}^{dr}\in \mathcal{S}\})\\
&\ \ \ \ \ \leq B^{C}\delta,
\end{align*}
where
$$\delta^{-1}=\min\limits_{n\in\mathcal{N}_1}\min\limits_{1\leq s\leq r}|n_s|.$$
\end{lem}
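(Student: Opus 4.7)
The plan is to argue by induction on $d$, with Lemma~\ref{proj} (the projection/transversality dichotomy) serving as the engine at every scale. The growth hypothesis $\min_s|n^{(i)}_s|>(B\max_s|n^{(i-1)}_s|)^C$, together with the smallness $\max_{n\in\mathcal{N}_{d-1}}|n|^C<\eta^{-1}$, is exactly what makes the resulting estimates compound without a catastrophic loss of $B^C$ per iteration.

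\textbf{Base case $d=2$.} One must bound
$$\mathrm{mes}\Bigl(\bigcup_{n^{(1)}\in\mathcal{N}_1}\{\omega\in[0,1]^r:(\omega,n^{(1)}\omega)\bmod\mathbb{Z}^{2r}\in\mathcal{S}\}\Bigr).$$
For fixed $n^{(1)}\in\mathcal{N}_1$, partition $[0,1]^r$ into $\prod_s|n^{(1)}_s|$ boxes $J_\alpha$ of sidelength $1/|n^{(1)}_s|$ in the $s$-th direction; on each $J_\alpha$ the map $\omega\mapsto n^{(1)}\omega\bmod\mathbb{Z}^r$ is an affine bijection onto a unit box. The image $\{(\omega,n^{(1)}\omega):\omega\in J_\alpha\}$ lies in an affine $r$-plane $\mathcal{L}_\alpha\subset[0,1]^{2r}$ whose tangent frame $(e_s,n^{(1)}_se_s)_{s=1}^r$ satisfies $|\mathrm{Proj}_{\mathcal{L}_\alpha}(e_j)|\asymp1/|n^{(1)}_j|\leq\delta$ for each first-block basis vector $e_j$. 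Apply Lemma~\ref{proj} to $\mathcal{S}\subset[0,1]^{2r}$ with $d_1=d_2=r$ and a splitting parameter $\varepsilon$ slightly exceeding $\delta$: the bad-projection piece contributes at most $B^C\varepsilon$ to the $\omega$-measure, while on each $\mathcal{L}_\alpha$ the transversality bound gives $\mathrm{mes}_r(\mathcal{L}_\alpha\cap\mathcal{S}_2)\leq B^C\varepsilon^{-1}\eta^{1/(2r)}$. Pulling back via the parametrization Jacobian $\asymp\prod_s|n^{(1)}_s|$ and then summing over the $\prod_s|n^{(1)}_s|$ boxes cancels exactly, so the transversal contribution is $\leq B^C\varepsilon^{-1}\eta^{1/(2r)}$ uniformly in $n^{(1)}$; the hypothesis $|n^{(1)}|^C<\eta^{-1}$ then forces this to be $\ll\delta$. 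The finite union over $n^{(1)}\in\mathcal{N}_1$ is absorbed via $|\mathcal{N}_1|\leq(\max|n^{(1)}|)^{O(r)}\leq B^{O(1)}$ after the final renaming of $C$.

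\textbf{Inductive step and principal obstacle.} For the passage $d\to d+1$, peel off the last block $\omega^{(d)}=n^{(d)}\omega$: apply Lemma~\ref{proj} to $\mathcal{S}\subset[0,1]^{(d+1)r}$ with $d_1=dr$, $d_2=r$ and $\varepsilon_d$ slightly larger than $1/\min_s|n^{(d)}_s|$. The projection $\mathrm{Proj}_{x_1}\mathcal{S}_1\subset[0,1]^{dr}$ is semi-algebraic of degree $\leq B^C$ (by Tarski--Seidenberg, Lemma~\ref{tsp}) and measure $\leq B^C\varepsilon_d$, so the induction hypothesis applied with the truncated sequence $\mathcal{N}_1,\ldots,\mathcal{N}_{d-1}$ and the same $\delta$ yields a bound $\leq B^C\delta$. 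The transversal piece $\mathcal{S}_2$ is handled exactly as in the base case, the growth condition $|n^{(d)}|>(B|n^{(d-1)}|)^C$ guaranteeing that the tangent $r$-planes' first-block projections are $\lesssim1/|n^{(d)}|\ll\varepsilon_d$. The main difficulty is not conceptual but parametric: at each scale $i=1,\ldots,d-1$ the splitting parameter $\varepsilon_i$ must simultaneously (a) satisfy $B^C\varepsilon_i\leq\delta$ so the projection piece feeds into the induction with the right smallness, (b) exceed $1/\min_s|n^{(i)}_s|$ so that Lemma~\ref{proj} applies to the graph, and (c) keep $B^C\varepsilon_i^{-1}\eta^{1/(dr)}\leq\delta$. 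The tower $|n^{(i)}|>(B|n^{(i-1)}|)^C$ coupled with $|n^{(d-1)}|^C<\eta^{-1}$ leaves precisely a non-empty window for each $\varepsilon_i$; propagating this arithmetic chain and the $B^C$ losses through $d-1$ iterations is the bulk of the technical work.
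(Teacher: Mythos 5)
The paper itself does not prove this lemma; it is quoted verbatim from Bourgain's [BG, Lemma 1.20]. Your outline — induction on the number of blocks, peeling off the last (largest) block, decomposing $\mathcal{S}$ via Lemma \ref{proj} into a small-projection piece (fed to the induction after Tarski--Seidenberg controls its degree) and a transversal piece handled by covering $[0,1]^r$ with boxes on which $\omega\mapsto n\omega\bmod\Z^r$ is affine, so that the graph sits in an $r$-plane with small projections of the $x_1$-coordinate vectors — is exactly the architecture of Bourgain's original argument, so the route is sound.

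However, the quantitative skeleton you record is not the right one, and the gap sits precisely at the step you defer as ``the bulk of the technical work.'' First, the absorption of the union over $\mathcal{N}_1$ (and, in the inductive step, over tuples in $\mathcal{N}_1\times\cdots\times\mathcal{N}_{d-1}$) via $|\mathcal{N}_1|\leq(\max|n^{(1)}|)^{O(r)}\leq B^{O(1)}$ is unjustified: nothing bounds $\max_{n\in\mathcal{N}_1}|n|$ by a power of $B$, so the cardinality must instead be beaten by the $\eta$-power coming from the hypothesis $\max_{n\in\mathcal{N}_{d-1}}|n|^{C}<\eta^{-1}$ (with $C=C(r,d)$ large), not by the degree. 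Second, your window for $\varepsilon_i$ is mis-specified on both sides. The lower constraint is not $\varepsilon_i>1/\min_s|n^{(i)}_s|$: the tangent plane spanned by $(e_s,n^{(1)}_se_s,\dots,n^{(i)}_se_s)$ has projections of the \emph{intermediate-block} coordinate vectors of size $\sim|n^{(i-1)}_j|/|n^{(i)}_j|$, and it is exactly the separation $\min_s|n^{(i)}_s|>(B\max_s|n^{(i-1)}_s|)^{C}$ that makes these ratios small — this is where the factor $B$ in the hypothesis is actually consumed, and your ``$\lesssim1/|n^{(i)}|$'' claim hides it. The upper constraint for the projection piece is not ``$B^{C}\varepsilon_i\leq\delta$'' but the measure condition required by the induction hypothesis one level down, namely that $B^{C}\varepsilon_i$ be smaller than a fixed negative power of $\max_{n\in\mathcal{N}_{i-1}}|n|$, which is a strictly stronger demand than $\leq\delta$ once $i\geq2$. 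With the constraints as you state them the iteration does not close; with the corrected ones it does (the tower growth leaves a nonempty window), but verifying this propagation is the actual content of the lemma, so as written the proof is incomplete at its decisive point even though the strategy coincides with Bourgain's.
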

\section{LDT of Green's functions}
If $\Lambda\subset \Z^d$, we denote $\widetilde{H}_{\Lambda}(\theta)=R_{\Lambda}{\widetilde{H}_{\lambda f,\omega,\theta}}R_{\Lambda}$,  where $R_{\Lambda}$ is the restriction operator. Define the Green's function as
\begin{align*}
G_{\Lambda}(E;\theta)=(\widetilde{{H}}_{\Lambda}(\theta)-E+i0)^{-1}.
\end{align*}
We denote by $Q_N$ an elementary region of size $N$ centered at $0$ (see \cite{JLS}), which is one of the following regions:
\begin{equation*}
  Q_N=[-N,N]^d
\end{equation*}
or
$$Q_N=[-N,N]^d\setminus\{n\in\mathbb{Z}^d: \ n_i\varsigma_i 0, 1\leq i\leq d\},$$
where  for $ i=1,2,\cdots,d$, $ {\varsigma_i\in \{\{n<0\},\{n>0\},\emptyset\}}$ and at least two $ \varsigma_i$  are not $\emptyset$.
Denote by $\mathcal{E}_N^{0}$ the set of all elementary regions of size $N$ centered at $0$. Let $\mathcal{E}_N$ be the set of all translates of  elementary regions, namely,
$$\mathcal{E}_N:=\bigcup\limits_{n\in\mathbb{Z}^d,Q_N\in \mathcal{E}_N^{0}}\{n+Q_N\}.$$

The main result of this section is  
\begin{thm}[LDT]\label{ldt}
Fix any  $0<c_1\ll\gamma$.  Then there exist $\underline{N}_0=\underline{N}_0(d,\rho,\gamma,f,c_1)$ and $ \lambda_0=\lambda_0(\underline{N}_0)>0$
such that for all $N\geq \underline{N}_0$ and $0<\lambda\leq \lambda_0$,  the following statements hold:
\begin{itemize}
  \item There is some semi-algebraic set $\Omega_N=\Omega_N(d,\rho,\gamma, \lambda f,c_1) \subset \mathbb{T}^d$ with  $\deg(\Omega_N)\leq N^{4d}$, and as $\lambda \to \infty$,
  \begin{align*}
    \mathrm{mes}(\T^d\backslash \cap_{N\geq \underline{N}_0} \Omega_N) \to 0.
  \end{align*}
  \item
  If  $\omega\in\Omega_N$ and $E\in\mathbb{R}$, then there exists some  set $X_N=X_N(d,\rho,\gamma, \lambda f,c_1,\omega,E)\subset \mathbb{T}^d$  such that
 \begin{align*}\sup_{1\leq j\leq d,\theta_j^\neg\in \T^{d-1}}\mathrm{mes}(X_N(\theta_j^\neg))\leq e^{-N^{c_1}},\end{align*}
and for  $\theta\notin X_N$, $Q\in\mathcal{E}_N^{0}$, one has
\begin{align*}
 \|G_{Q}(E;\theta)\|&\leq e^{{N}^{\gamma/2}},\\
|G_{Q}(E;\theta)(n,n')|&\leq  e^{-\frac{(1-5^{-\gamma})\rho}{2}|n-n'|^\gamma}\  {\mathrm{for} \ |n-n'|\geq {N}/{10}}.
\end{align*}
\end{itemize}
\end{thm}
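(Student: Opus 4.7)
The plan is to prove Theorem \ref{ldt} by multi-scale analysis on a sequence of scales $N_0<N_1<\cdots$ with $N_{k+1}\sim N_k^{\beta}$ for some $\beta>1$, following the template of Bourgain \cite{BG} and Jitomirskaya-Liu-Shi \cite{JLS} but tuning every quantitative bound to the Gevrey index $\gamma$. The inductive hypothesis at scale $N_k$ supplies a semi-algebraic frequency set $\Omega_{N_k}\subset\T^d$ of degree $\leq N_k^{4d}$, and, for each $\omega\in\Omega_{N_k}$ and each $E\in\R$, a bad phase set $X_{N_k}$ whose one-dimensional sections $X_{N_k}(\theta_j^\neg)$ have measure $\leq e^{-N_k^{c_1}}$, outside of which both Green-function bounds hold for every $Q\in\mathcal{E}_{N_k}^{0}$.

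For the initial scale I would take $\underline{N}_0$ so large that $\lambda_0 e^{\underline{N}_0^{\gamma/2}}\ll 1$, and use the analyticity and non-degeneracy of $f$: after excising the $\theta$-set on which $\min_{n\in Q}|f(\theta+n\omega)-E|<e^{-\underline{N}_0^{c_1}}$, whose $\theta_j^\neg$-sections have measure of the required size by a \L{}ojasiewicz inequality for the non-constant analytic function $\theta_j\mapsto f(\theta_j,\theta_j^\neg)$, the diagonal dominates the long-range perturbation $\lambda\mathcal{T}_v$ and a Neumann series yields both bounds with ample slack.

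The inductive step $N\rightsquigarrow N'=N_{k+1}$ is the main work. Cover $Q'\in\mathcal{E}_{N'}^{0}$ by translates $Q_n\in\mathcal{E}_{N}$ and call $n$ \emph{bad} if $\theta+n\omega\pmod{\Z^d}\in X_N$. The decisive combinatorial input is a precise sub-linear growth statement: after removing an additional $\omega$-set obtained from Lemma \ref{rmf} and a further $\theta$-set from the projection Lemma \ref{proj}, each applied to the semi-algebraic description of $X_N$, the number of bad sub-boxes in $Q'$ is bounded by $(N'/N)^{1-\delta(\gamma)}$ with $\delta(\gamma)>0$ depending quantitatively on $\gamma$. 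Chaining the resolvent identity across the good sub-boxes and using the Gevrey decay of $\widehat v$ to bound the non-local boundary terms then delivers the claimed off-diagonal bound at scale $N'$; the constant $\tfrac12(1-5^{-\gamma})\rho$ arises from reserving a $5^{-\gamma}$-fraction of the decay budget to absorb losses from bad-site neighbourhoods over many iterations. Simultaneously, a matrix-valued Cartan estimate (Appendix) applied to the subharmonic function $\theta_j\mapsto\log\|G_{Q'}(E;\theta_j,\theta_j^\neg)^{-1}\|$ delivers both the norm bound $\|G_{Q'}\|\leq e^{(N')^{\gamma/2}}$ and the measure bound $e^{-(N')^{c_1}}$ on the $\theta_j$-sections of the bad set, with $\theta_j^\neg$ frozen. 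Finally $\Omega_{N'}$ is obtained by intersecting $\Omega_N$ with the Diophantine-type semi-algebraic conditions required above; after truncating $\widehat v$ at modes $|n|\lesssim (N')^{1/\gamma}$ everything becomes polynomial of degree $\lesssim (N')^{4d}$, and the measure of the complement tends to zero as $\lambda\to 0$ (equivalently $\underline{N}_0\to\infty$) by iterated application of Lemma \ref{rmf}.

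The hardest part is propagating sub-exponential decay through arbitrarily many scales without degrading the constant in front of $|n-n'|^\gamma$. In the analytic case $\gamma=1$ treated in \cite{JLS} the per-scale losses are exponentially summable; for $\gamma<1$ sub-additivity of $x\mapsto x^\gamma$ forbids na\"ive telescoping and one must instead show that on every resolvent chain realizing a jump of length $\ell\gtrsim N$ the accumulated Gevrey exponents sum to at least $(1-O(5^{-\gamma}))\ell^\gamma$. This forces the quantitative sub-linearity $\delta(\gamma)>0$, the assumption $c_1\ll\gamma$, and the precise loss $e^{(N')^{\gamma/2}}$ in the norm bound. Closing the induction amounts to verifying that these three quantities can be chosen jointly so that the Cartan measure estimate, the semi-algebraic elimination, and the resolvent iteration remain simultaneously consistent at every scale; this is exactly the improvement over \cite{JLS} referred to in the Perturbative Essentials subsection.
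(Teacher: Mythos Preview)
Your initial step (\L{}ojasiewicz for the analytic non-degenerate $f$ plus a Neumann series) matches the paper's Lemma \ref{lemini}. The inductive step you sketch, however, is the two-scale sublinear-growth template of \cite{BGSA}, not the three-scale scheme of \cite{BG}/\cite{JLS} that the paper actually runs, and this is a genuine gap in the setting $n\omega=(n_1\omega_1,\dots,n_d\omega_d)$ with $d\geq 3$.

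Concretely, the paper does not pass from $N$ to $N'\sim N^\beta$. It works with three scales at once, $N_1$, $N_2\sim N_1^{2/c_1}$, and $N$ with $\log N\sim N_1^{c_1}$, so the jump to $N$ is \emph{exponential}. The combinatorial input (Theorem \ref{claim}) is not a bound $(N'/N)^{1-\delta(\gamma)}$ on the number of bad $N$-boxes inside an $N'$-box; instead, after a further $\omega$-elimination via Lemmas \ref{svl} and \ref{rmf}, one exhibits an intermediate scale $M\in[N^{c_3},N^{c_4}]$ and a core $\bar\Lambda\subset[-M,M]^d$ of diameter only $M^{\gamma/(10d)}$ such that \emph{every} site of $[-M,M]^d\setminus\bar\Lambda$ is $N_1$-good. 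Cartan's lemma is then applied at this intermediate scale $\widetilde N\sim M$ (Theorem \ref{thmcar}), with the LDT at scale $N_2$ supplying the a-priori measure bound (iii); finally one paves $Q\in\mathcal{E}_N^0$ by these $M$-sized blocks (Theorem \ref{xn}). The huge gap between $N_1,N_2$ and $N$ is exactly what makes the Cartan output $e^{-\widetilde N^{\gamma/3}}$ dominate the target $e^{-N^{c_1}}$ (this forces $c_1<\gamma c_3/10$, hence $c_1\ll\gamma$) and what allows the per-step decay loss $C/N_1^{\gamma/2}$ in $\bar\rho$ to sum.

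Your proposed sublinearity bound is precisely the property the paper's introduction points out cannot be secured for the diagonal shift when $d\geq 3$; bypassing it was Bourgain's innovation in \cite{BG}. Two related misidentifications: the elimination inside the LDT uses the multi-variable Lemma \ref{svl} (to eliminate $(E,\theta)$ jointly), not the projection Lemma \ref{proj}, which only enters later in the proof of Theorem \ref{mthm2}; and Cartan is applied on the $M$-sized block with $|V|=|\bar\Lambda|\lesssim M^{\gamma/10}$, not directly on $G_{Q'}$ at scale $N'$, where the bad set would be far too large for Lemma \ref{mcl} to yield a useful measure estimate.
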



\begin{proof}[\bf Proof of Theorem \ref{ldt}]
The proof is based on the multi-scale analysis {scheme} as in \cite{BG,JLS}.  The proof breaks up into  three steps.

{\bf STEP 1: Proof of inductive step}

This will be completed by using semi-algebraic sets {arguments} and Cartan's estimates as in \cite{BG} and \cite{JLS}.

We define for $1\ll N_1\in\N$ the  scales
\begin{align*}
N_2\sim N_1^{2/{c_1}}, \ \log N\sim {N_1^{c_1}}.
\end{align*}
Then we have
\begin{thm}\label{claim}
Let $\Omega_{N_i}$ $(i=1,2)$ be some semi-algebraic set satisfying $\deg(\Omega_{N_i})\leq N_i^{4d}$ and let $\bar\rho_i\in (0,\rho)$. Assume further the following holds:
If  $\omega\in\Omega_{N_i}$ and $E\in\mathbb{R}$, then there exists some semi-algebraic set $X_{N_i}\subset \mathbb{T}^d$ satisfying $\deg(X_{N_i})\leq N_i^{C(d)}$  such that
 \begin{align*}\sup_{1\leq j\leq d,\theta_j^\neg\in \T^{d-1}}\mathrm{mes}( X_{N_i}(\theta_j^\neg))\leq e^{-N_i^{c_1}},\end{align*}
and for  $\theta\notin X_{N_i}$, $Q\in\mathcal{E}_{N_i}^{0}$, one has
\begin{align}
\label{ldt1newi} \|G_{Q}(E;\theta)\|&\leq e^{{N_i}^{\gamma/2}},\\
\label{ldt2newi} |G_{Q}(E;\theta)(n,n')|&\leq  e^{-\bar\rho_i|n-n'|^\gamma}\  {\mathrm{for} \ |n-n'|\geq {N_i}/{10}},\\
\nonumber&(i=1,2).
\end{align}
Then there exist positive constants $c_2<c_3<c_4<\gamma/10$ (depending only on $d$)  such that the following holds:  There exists some semi-algebraic set $\Omega_{{N}} \subset \Omega_{{N}_1}\cap\Omega_{{N}_2}$ with  $\deg(\Omega_{N})\leq {N}^{4d}$ and
$\mathrm{mes}((\Omega_{{N}_1}\cap\Omega_{{N}_2})\backslash\Omega_{{N}})\leq {N}^{-c_2}$
such that, if $\omega\in\Omega_{N}$, then for $E\in \R$ and $\theta\in \T^d$, there is  $\frac{{N}^{c_3}}{10}<M<10{N}^{c_4}$ such that
for all  $k\in \Lambda \backslash \bar\Lambda$,  one has $ \theta+k\omega \mod \Z^d\notin X_{N_1}$, where
\begin{equation*}
  \Lambda=[-M,M]^d,\bar\Lambda=[-M^{\frac{\gamma}{10d}},M^{\frac{\gamma}{10d}}]^{d}.
\end{equation*}
\end{thm}
\begin{proof}
The main point of the proof is to eliminate $(E,\theta)$ by applying Lemmas \ref{svl} and \ref{rmf}. We refer to \cite{BG} for details (see also comments in \cite{JLS}). We remark that the resolvent identity is actually unnecessary in the proof.
\end{proof}

We then construct the $X_N$ by using Cartan's estimates and the resolvent identity.
\begin{lem}[Cartan's estimates, \cite{BB}]\label{mcl}
Let $T(\theta)$ be a self-adjoint $N\times N$ matrix-valued function of a parameter $\theta\in[-\delta,\delta]$  satisfying the following conditions:
\begin{itemize}
\item[(i)] $T(\theta)$ is real analytic in $\theta\in [-\delta,\delta]$ and has a holomorphic extension to
\begin{align*}
\mathcal{D}_{\delta}=\left\{\theta\in\mathbb{C}: \ |\Re \theta|\leq\delta,|\Im{\theta}|\leq \delta\right\}
\end{align*}
satisfying
\begin{align*}
\sup_{\theta\in \mathcal{D}_{\delta}}\|T(\theta)\|\leq K_1, K_1\geq 1.
\end{align*}
\item[(ii)]  For all $\theta\in[-\delta,\delta]$, there is a subset $V\subset [1,N]$ with
\begin{align*}
|V|\leq M
\end{align*}
and
\begin{align*}
\|(R_{[1,N]\setminus V}T(\theta)R_{[1,N]\setminus V})^{-1}\|\leq K_2, K_2\geq 1.
\end{align*}
\item[(iii)]
\begin{align*}
\mathrm{mes}\{\theta\in[-{\delta}, {\delta}]: \ \|T^{-1}(\theta)\|\geq K_3\}\leq 10^{-3}\delta(1+K_1)^{-1}(1+K_2)^{-1}.
\end{align*}
\end{itemize}
Let
\begin{align*}
0<\varepsilon\leq (1+K_1+K_2)^{-10 M}.
\end{align*}
Then
\begin{align}\label{mc5}
\mathrm{mes}\left\{\theta\in\left[-{\delta}/{2}, {\delta}/{2}\right]:\  \|T^{-1}(\theta)\|\geq \varepsilon^{-1}\right\}\leq C\delta e^{-\frac{c\log \frac1\varepsilon}{M\log(K_1+K_2+K_3)}},
\end{align}
where $C, c>0$ are some absolute constants.
\end{lem}

Applying  Cartan's estimates yields the following result.
\begin{thm}\label{thmcar}
Fix $1\leq j\leq d$ and $\theta_j^\neg\in\T^{d-1}$. Write $\theta=(\theta_j, \theta_j^\neg)\in\T^{d}$. Assume that the assumptions of Theorem \ref{claim} are satisfied. Assume further there exist $\widetilde N\in[{N^{c_3}}/{4}, N^{c_4}]$ and $\bar{\Lambda}\subset \Lambda\in \mathcal{E}_{\widetilde N}$ with ${\rm diam}(\bar \Lambda)\leq 4\widetilde N^{\frac{\gamma}{10d}}$ such that,
for any $ k\in  \Lambda \backslash  \bar{\Lambda}$,
there exists some $\mathcal{E}_{N_1}\ni W\subset \Lambda\backslash  \bar{\Lambda}$ such that ${\rm dist}(k,\Lambda \backslash  \bar{\Lambda}\backslash W)\geq {N_1}/{2},$ and $\theta+k\omega\mod\Z^d\notin X_{N_1}$.
Let
\begin{align*}
  Y_\theta=\left\{y\in \R: |y-\theta_j|\leq e^{-10\rho N_1^\gamma},\|G_{\Lambda}(E;(y,\theta_j^\neg))\|\geq e^{{\widetilde N}^{\gamma/2}}\right\}.
\end{align*}
Then for $\omega\in\Omega_{N_1}\cap\Omega_{N_2}$, one has
\begin{align*}
\mathrm{mes}(Y_\theta)\leq e^{-{\widetilde N}^{\gamma/3}}.
\end{align*}

\end{thm}

\begin{proof}
 The proof is similar to that in \cite{JLS}. Let $\mathcal{D}$ be the $e^{-10\rho N_1^\gamma}$-neighbourhood  of $\theta_j$ in the complex plane, i,e.,
\begin{equation*}
 \mathcal{D}=\{y\in \mathbb{C}:\  |\Im y|\leq e^{-10\rho N_1^\gamma}, |\Re y-\theta_j|\leq e^{-10\rho N_1^\gamma}\}.
\end{equation*}
Applying Theorem \ref{claim} yields  for all $k\in \Lambda\backslash \bar{\Lambda}$ and $Q\in \mathcal{E}_{N_1}^0$,
\begin{align}
\|G_{Q}(E;\theta+k\omega)\|&\leq e^{{N_1}^{\gamma/2}},\label{Apr20}\\
|G_{Q}(E;\theta+k\omega)(n,n')|&\leq e^{-\bar{\rho}_1|n-n'|^\gamma}\ {\rm for}\ |n-n'|\geq{N_1}/{10}.\label{Apr21}
\end{align}
Note that for all $n,n'\in [-N_1,N_1]^d$, one has
\begin{align*}
e^{-10\rho N_1^\gamma}<e^{-3\bar\rho_1N_1^\gamma-\bar\rho_1|n-n'|^\gamma}.
\end{align*}
Then by Lemma \ref{pag},  \eqref{Apr20} and \eqref{Apr21}, we have for any $y\in \mathcal{D}$, $Q\in \mathcal{E}_{N_1}^0$
and $k\in\Lambda\backslash\bar\Lambda$,
\begin{align}
\|G_{Q}(E;(\theta_j+y,\theta_j^\neg)+k\omega)\|&\leq 2e^{{N_1}^{\gamma/2}},\label{Apr22}\\
|G_{Q}(E;(\theta_j+y,\theta_j^\neg)+k\omega)(n,n')|&\leq 2e^{-\bar{\rho}_1|n-n'|^\gamma}\ {\rm for}\ |n-n'|\geq{N_1}/10.\label{Apr23}
\end{align}
 Applying  Lemma \ref{res1} with $M_1=M_0=N_1$ implies for any $y\in \mathcal{D}$,
\begin{align}
\label{bs3}\|G_{\Lambda\setminus \bar\Lambda}(E;(\theta_j+y,\theta_j^\neg))\|\leq 4(2N_1+1)^de^{{N_1}^{\gamma/2}}\leq e^{2{N_1}^{\gamma/2}}.
\end{align}

We want to use  Lemma \ref{mcl}  to finish the  proof.
For this purpose, let  \begin{align}\label{sep17}
T(y)=\widetilde{H}_{\Lambda}((\theta_j+y,\theta_j^\neg))-{E},\delta=\delta_1=2e^{-10\rho N_1^\gamma}.
\end{align}
It suffices to verify the assumptions of Lemma \ref{mcl}.
Obviously, $K_1=O(1)$.
By  \eqref{bs3}, one has
\begin{align}\label{mb}
 M=|\bar\Lambda|\leq 100^d\widetilde N^{{\gamma}/{10}},
K_2=e^{2{N_1}^{\gamma/2}}.
\end{align}
Since $\omega\in\Omega_{N_2}$,
\eqref{ldt1newi} and \eqref{ldt2newi} hold at scale $N_2$ for $y$ being outside a set  of measure at most $e^{-{N_2^{c_1}}}$.
Applying Lemma \ref{res1} with $M_0=M_1=N_2$ yields
\begin{eqnarray*}
 \|T^{-1}(y)\|\leq 4(2N_2+1)^de^{{N_2}^{\gamma/2}}\leq e^{2{N_2}^{\gamma/2}}=K_3
\end{eqnarray*}
for $y$ being outside a set  of the measure at most
$$(2\widetilde N+1)^de^{-{N_2^{c_1}}}\leq e^{-{N_2^{c_1}}/{2}}.$$
It follows from $100N_1^\gamma<N_2^{c_1}$ that
\begin{equation*}
10^{-3}\delta_1(1+K_1)^{-1}(1+K_2)^{-1}\geq e^{-{N_2^{c_1}}/{2}}.
\end{equation*}
This verifies (iii) of Lemma \ref{mcl}.
For $\varepsilon=e^{-{\widetilde N}^{\gamma/2}}$,  one has by \eqref{sep17} and \eqref{mb},
  $$\varepsilon<(1+K_1+K_2)^{-10M}.$$
By \eqref{mc5} of Lemma \ref{mcl}, we obtain
\begin{equation*}
\mathrm{mes}(Y_\theta)\leq e^{-\frac{c{\widetilde N}^{\gamma/2}}{N_2\widetilde N^{{\gamma}/{10}}\log \widetilde N}}\leq e^{-\widetilde N^{\gamma/3}}.
\end{equation*}

\end{proof}

Combining Theorems \ref{claim} and \ref{thmcar} yields
\begin{thm}\label{xn}
Let $\omega\in\Omega_N$ and fix $N_\star\in[N,N^2]$. If $E\in\mathbb{R}$ and $c_1<\gamma c_3/10$, then there exists some  set $X_{N_\star}=X_{N_\star}(E,\omega)\subset \mathbb{T}^d$  such that
 \begin{align*}\sup_{1\leq j\leq d,\theta_j^\neg\in \T^{d-1}}\mathrm{mes}( X_{N_\star}(\theta_j^\neg))\leq e^{-N_\star^{c_1}},\end{align*}
and for $\theta\notin X_{N_\star}$, $Q\in\mathcal{E}_{N_\star}^{0}$, one has
\begin{align*}
 |G_{Q}(E;\theta)(n,n')|&\leq  e^{-(\bar\rho_1-\frac{C}{N_1^{\gamma/2}})|n-n'|^\gamma}\  {\mathrm{for} \ |n-n'|\geq {N_\star}/{10}},
\end{align*}
where $C=C(d,\gamma,\rho)>0$.
\end{thm}

\begin{proof}
Fix $1\leq j\leq d,\theta_j^\neg\in\T^{d-1}$ and  $\theta=(\theta_j,\theta_j^\neg)\in\T^d$.  As done in \cite{JLS} by using Theorem \ref{claim}, for such $\theta$ and any $n\in Q\in\mathcal{E}_{N_\star}^0$, there exist $\frac{1}{4}N^{c_3}\leq \widetilde{N}_{n,\theta} \leq N^{c_4}$,  $\Lambda_{n,\theta} \in \mathcal{E}_{\widetilde{N}}$ and $\bar{\Lambda}_{n,\theta}$, such that
\begin{align*}
n\in \bar\Lambda_{n,\theta}\subset\Lambda_{n,\theta} \subset Q, {\rm dist}(n,  Q\backslash \Lambda_{n,\theta})\geq{\widetilde{N}}/{2},\ {\rm diam}(\bar{\Lambda}_{n,\theta}) \leq 4 \widetilde{N}_{n,\theta}^{\frac{\gamma}{10d}}.
\end{align*}
Moreover,  for any $ k\in  \Lambda_{n,\theta}\backslash  \bar{\Lambda}_{n,\theta}$, we have $\theta+k\omega\mod\Z^d\notin X_{N_1}$,  and
there exists some $\mathcal{E}_{N_1} \ni W \subset \Lambda_{n,\theta}\backslash  \bar{\Lambda}_{n,\theta}$ such that
\begin{equation*}\label{Apr5}
  k\in W,\  {\rm dist}(k,\Lambda_{n,\theta}\backslash  \bar{\Lambda}_{n,\theta}\backslash W)\geq {N_1}/{2}.
\end{equation*}

We now fix the above $\widetilde N_{n,\theta}, \bar\Lambda_{n,\theta},\Lambda_{n,\theta}$ throughout the set $\{(y,\theta_j^\neg)\in\R^d:\ |y-\theta_j|\leq e^{-10\rho N_1^\gamma}\}$. Recalling Lemma \ref{pag} and the above constructions, the assumptions of Theorem \ref{thmcar} are essentially satisfied. Applying Theorem \ref{thmcar} implies that 
 there exists a set $Y_{n,\theta}\subset \{y\in\R:\ |y-\theta_j|\leq e^{-10\rho N_1^\gamma}\}$ such that
\begin{equation}\label{Apr6}
\mathrm{mes}(Y_{n,\theta})\leq e^{-\widetilde{N}_{n,\theta}^{{\gamma}/{3}}},
\end{equation}
and for $\theta_j\notin Y_{n,\theta}$, one has
\begin{equation*}\label{Apr7}
\|G_{\Lambda_{n,\theta}}(E;\theta)\|\leq e^{{\widetilde{N}_{n,\theta}^{\gamma/2}}}.
\end{equation*}
Applying Lemma \ref{res2} with $M_0=N_1, \Lambda=\Lambda_{n,\theta}$ and $\Lambda_1=\bar\Lambda_{n,\theta}$ yields 
\begin{equation*}\label{Apr8}
  |G_{\Lambda_{n,\theta}}(E;\theta)(n,n')|\leq  e^{- (\bar{\rho}-\frac{C}{N_1^{\gamma/2}})|n-n'|^\gamma}\  {\mathrm{for} \ |n-n'|\geq {\widetilde{N}_{n,\theta}}/{10}}.
\end{equation*}

Cover $[0,1]$ by pairwise disjoint $e^{-10\rho N_1^\gamma}$-size intervals and
let
\begin{equation}\label{Apr9}
 X_{N_\star}(\theta_j^\neg)=\bigcup_{Q\in\mathcal{E}_{N_\star}^0, n\in Q, \theta=(\theta_j,\theta_j^\neg)}
 Y_{n,\theta}.
\end{equation}
We remark that while $\theta=(\theta_j,\theta_j^\neg)$ varies on a line for a fixed $\theta_j^\neg$, the total number of $Y_{n,\theta}$ is bounded by $e^{10\rho N_1^\gamma}$. Thus by \eqref{Apr6}, \eqref{Apr9} and $c_1<\gamma c_3/10$,
one has 
\begin{equation*}\label{Apr12}
\mathrm{mes}({X}_{{N_\star}}(\theta_j^\neg))\leq C(2N+1)^{d}e^{10\rho N_1^\gamma}e^{-\widetilde N_{n,\theta}^{\gamma/3}} \leq e^{-{N_\star}^{c_3\gamma/7}}\leq e^{-{N_\star}^{c_1}}.
\end{equation*}

Suppose  now $\theta\notin X_{N_\star}$. Applying Lemma \ref{res1} with $\Lambda=Q\in \mathcal{E}_{N_\star}^0$, $M_0=\frac{1}{4}N^{c_3}$ and $M_1=\widetilde N_{n,\theta}\leq N^{c_4}$, 
 one has
\begin{equation*}\label{Apr10}
\|G_{Q}(E;\theta)\|\leq 4(2 N^{c_4}+1)^de^{{N^{c_4\gamma/2}}}\leq e^{{N_\star}^{\gamma/2}}.
\end{equation*}
Applying  Lemma \ref{res2} with $\Lambda=Q$, $M_0=\frac{1}{4}N^{c_3}$, $M_1= \widetilde N_{n,\theta}\leq N^{c_4}$ and $ \Lambda_1=\emptyset$,
we have
\begin{equation*}\label{Apr11}
  |G_{Q}(E;\theta)(n,n')|\leq  e^{- (\bar{\rho}_1-\frac{C}{N_1^{\gamma/2}})|n-n'|^\gamma}\  {\mathrm{for} \ |n-n'|\geq {N_\star}/{10}}.
\end{equation*}
This proves the {theorem}.
\end{proof}

{\bf STEP 2: Proof of initial step}

\begin{lem}\label{lemini}
Let 
\begin{align*}
X_{N}=\bigcup_{|n|\leq {N}}\left\{\theta:\  |f(\theta+n\omega)-E|<\delta\right\}.
\end{align*}
Then  we have for any $1\leq j\leq d$,
\begin{align*}
\sup_{\theta_j^\neg\in\T^{d-1}}\mathrm{mes}(X_N(\theta_j^\neg))\leq C(2N+1)^d\delta^{c},
\end{align*}
where $C=C(f)>0,c=c(f)>0$.
Moreover, if
$\lambda^{-1}\geq 2\delta^{-1}(2N+1)^d$,
then for any $\theta\notin X_{N},\ \omega\in\T^d$ and $\Lambda\subset [-N,N]^d$, we have
\begin{align*}
\|G_{\Lambda}(E;\theta)\|&\leq 2\delta^{-1}, \\
|G_{\Lambda}(E;\theta)(n,n')|&\leq 2\delta^{-1}e^{-{\rho}|n-n'|^{\gamma}}.
\end{align*}
\end{lem}
\begin{proof}
The measure bound follows  from a {\L}ojasiewicz  type inequality (see Lemma 5.2 of \cite{JLS}) and the \textit{non-degeneracy} condition of $f$ immediately.

The Green's function estimates follow from the Neumann series argument. For details, we refer to \cite{JLS} (or the proof of Lemma \ref{pag}, which deals with some more complicated setting).

\end{proof}

{\bf STEP 3: Completion of the proof}

This will follow from Theorem \ref{xn}, Lemma \ref{lemini} and a multi-scale induction. For details, we refer to \cite{JLS}.
\end{proof}

\section{Proof of Theorem \ref{mthm2}}
The key {point} of the proof is to eliminate the energy $E$ in the LDT and this needs to remove further $\omega$  by semi-algebraic geometry arguments (i.e. Lemma \ref{proj}).

\begin{proof}[\bf Proof of Theorem \ref{mthm2}]
The proof is rather standard and based on Theorems \ref{ldt}, \ref{claim} and Lemma \ref{proj}. We refer to \cite{BG} for details.
\end{proof}

\section{Proof of Theorem \ref{mthm1}}
In this section we will prove Theorem \ref{mthm1} by using the  LDT and the Delyon's trick \cite{Del87}. 

Fix
\begin{align*}
\bar\rho=(1-5^{-\gamma})\rho.
\end{align*}
We have the  Poisson's identity: For $\widetilde H(\theta)\xi=E\xi$ and $n\in \Lambda\subset\Z^d$,
\begin{align}\label{pi}
\xi_n=-\lambda\sum_{n'\in\Lambda,n''\notin\Lambda}G_{\Lambda}(E;\theta)(n,n')\widehat{v}_{n'-n''}\xi_{n''}.
\end{align}
\begin{proof}[\bf Proof of Theorem \ref{mthm1}]
Let $\omega\in \bigcap_{N\geq \underline{N}_0} \Omega_{N}$ and $ 0<\lambda\leq\lambda_0$ be as in Theorem \ref{ldt}. 
Suppose $H_{\lambda v,\omega,x}$ has some eigenvalue $E$. Then there must be some $0\neq\psi=\{\psi_\ell\}_{\ell\in\Z}\in\ell^2(\Z)$ so that
\begin{align*}
\sum_{\ell'\in\Z}\widehat g_{\ell-\ell'}\psi_{\ell'}+(\lambda v({x}+\ell{\omega})-E)\psi_\ell=0.
\end{align*}
Define
\begin{align*}
F(\theta)&=\sum_{\ell\in\Z}\psi_\ell e^{2\pi i \ell\theta}
\end{align*}
and
\begin{align*}
\xi_{n}(\theta)&=e^{2\pi i{n}\cdot {x}}F(\theta+{n}\cdot\omega).
\end{align*}
We have
\begin{align}\label{fn0}
\|F\|_{L^2(\T)}=\|\psi\|_{\ell^2(\Z)}>0
\end{align}
and by direct computation
\begin{align}\label{Fe}
(g(\theta)-E)F(\theta)+\lambda\sum_{{k}\in\Z^d}\widehat v_{k} \xi_{k}(\theta)=0.
\end{align}

Then
\begin{align*}
\int_{\T}\sum_{{n}\in\Z^d}\frac{|\xi_{n}(\theta)|^2}{1+|{n}|^{2d}}{d}\theta&=\sum_{{n}\in\Z^d}\frac{\|F\|_{L^2(\T)}^2}{1+|{n}|^{2d}}\\
&\leq C\|F\|_{L^2(\T)}^2<\infty.
\end{align*}
This implies that for a.e. $\theta$, we have $\sum\limits_{{n}\in\Z^d}\frac{|\xi_{n}(\theta)|^2}{1+|{n}|^{2d}}<\infty$ and
\begin{align*}
|\xi_{n}(\theta)|\leq C(\theta,d)|{n}|^{d},\ C(\theta,d)>0.
\end{align*}
We let $\theta=\theta+{n}\cdot\omega$  in (\ref{Fe}).  Then
\begin{align*}
(g(\theta+{n}\cdot\omega)-E)F(\theta+{n}\cdot\omega)+\lambda\sum_{{k}\in\Z^d}\widehat v_{k} e^{2\pi i{k}\cdot {x}}F(\theta+({n}+{k})\cdot\omega)=0.
\end{align*}
Multiplying by $e^{2\pi i{n}\cdot {x}}$ on the above equality implies
\begin{align}\label{xie}
(g(\theta+{n}\cdot\omega)-E)\xi_{n}(\theta)+\lambda\sum_{{k}\in\Z^d}\widehat v_{{n}-{k}} \xi_{{k}}(\theta)=0.
\end{align}

Now let  $X_N=X_N(\omega,E)$ be as in Theorem \ref{ldt}. We define
\begin{align*}
\Theta=\bigcup_{M\geq \underline{N}_0}\bigcap_{N\geq M} X_N.
\end{align*}
Then by $\mathrm{mes}(X_N)\leq e^{-N^{c_1}}$, one has $\mathrm{mes}(\Theta)=0$. Fix $\theta\in \T\setminus\Theta$. Then there exists  $M\geq \underline{N}_0$  such that
$$\theta\notin X_{N}\ \mathrm{for}\ N\geq M.$$
Recalling \eqref{pi},  \eqref{xie} and Theorem \ref{ldt}, one has for $N\geq M\gg1$,
\begin{align*}
|F(\theta)|=|\xi_{0}(\theta)|&=|\sum_{|{n}|\leq N,|{n}'|>N}G_{[-N,N]^d}(E;\theta)(0,{n})\widehat v_{{n}-{n}'} \xi_{{n}'}(\theta)|\\
&\leq C(\theta,d) \sum_{|{n}|\leq N,|{n}'|>N}e^{-\frac{\bar\rho}{2}|{n}|^\gamma+\frac{\bar\rho}{2} (N/10)^\gamma+N^{{\gamma}/{2}}}e^{-\rho|{n}-{n}'|^\gamma} |{n}'|^d\\
&\leq C(\theta,d)N^d\sum_{|{n}'|>N}e^{-\frac{\bar\rho}{2}|{n}'|^\gamma+\frac{\bar\rho}{2} (N/10)^\gamma+N^{\gamma/2}}|{n}'|^d\\
&=o(N).
\end{align*}
Letting $N\to\infty$, we have $F(\theta)=0$ for a.e. $\theta\in\T\setminus\Theta$.  Thus $\|F\|_{L^2(\T)}=0$, which contradicts (\ref{fn0}).

This proves Theorem \ref{mthm1}.
\end{proof}


\section{Proof of Theorem \ref{cor1}}
In this section we will prove Theorem \ref{cor1} by applying the LDT. The main idea of the proof  is from Bourgain \cite{BB}, where the $1D$ analytic Schr\"odinger operator with the single-frequency  was investigated. For $f(\theta+n\omega)=g(\theta+n\cdot\omega)$, we denote by $\widetilde\Sigma$ the spectrum of $\widetilde H_{\lambda f,\omega,\theta}$, which is also independent of $\theta$.  Thus to prove Theorem \ref{cor1}, it suffices to show  $\widetilde\Sigma$ has  positive Lebesgue measure.

For simplicity, we write $\widetilde H(\theta)=\widetilde H_{\lambda f,\omega,\theta}$ and $$\widetilde H_N(\theta)=R_{\Lambda}\widetilde H(\theta)R_{\Lambda} \ \mathrm{for}\ \Lambda\in \mathcal{E}_N^0.$$ We denote by $\{e_k:\ k\in\Z^d\}$ (resp. $\langle\cdot,\cdot\rangle$) the standard orthogonal basis (resp. inner product) on $\ell^2{(\Z^d)}$.
\begin{lem}\label{l61}
Let $\omega\in\bigcap_{N\geq \underline{N}_0}\Omega_{N}$ and $N_0\gg \underline{N}_0$. Then there exists a positive constant $\lambda_0=\lambda_0(N_0)\ll1$ such that the following holds: If $0\leq \lambda\leq\lambda_0$, then there exist an interval $I_0\subset[0,1]$ and a continuous function $E_{I_0}(\cdot)$ on $I_0$ satisfying
\begin{align*}
|I_0|\geq N_0^{-C_1}
\end{align*}
and for $\theta\in I_0$,
\begin{align*}
\min_{\xi\in\mathrm{Span}\{e_k:\ k\in\Z^d, |k|\leq N_0\},\ \|\xi\|=1}\|(\widetilde H(\theta)-E_{I_0}(\theta))\xi\|\leq e^{-c_5(\log N_0)^{\gamma/c_1}},
\end{align*}
where $0<c_5=c_5(\gamma,\rho)\ll 1$ and $C_1=C_1(d)>1$.
\end{lem}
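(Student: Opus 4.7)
The plan follows Bourgain's construction for the one-frequency case \cite{BB}, adapted to the multi-dimensional Gevrey long-range setting. I will build an approximate eigenvector from a localized eigenvector of the finite-volume operator $\widetilde H_{N_0}(\theta)$ and define $E_{I_0}(\theta)$ as the corresponding eigenvalue branch tracked by analytic perturbation theory.

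Fix an intermediate scale $M_0:=[(\log N_0)^{1/c_1}]\ll N_0$, chosen so that $e^{-\rho M_0^\gamma}\leq e^{-c_5(\log N_0)^{\gamma/c_1}}$ (with $c_5=c_5(\rho,\gamma)$ small) matches the target error bound, while simultaneously $e^{-M_0^{c_1}}\leq N_0^{-C_1-1}$. Select $\theta_0\in[0,1]$ outside the union of the LDT bad sets $X_{M_0}(E)$ taken over a finite grid of reference energies of spacing $\ll N_0^{-C_1}$; since each such set has line-measure $\leq e^{-M_0^{c_1}}\leq N_0^{-C_1-1}$, the total exclusion is of measure $\ll 1$. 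The matrix $\widetilde H_{N_0}(\theta_0)$ has $(2N_0+1)^d$ eigenvalues in a bounded interval, so by a pigeonhole and genericity argument some eigenvalue $E_0$ has spectral gap at least $cN_0^{-C_1}$ to all its neighbors. Let $\xi_0$ denote its normalized eigenvector with ``center'' $n_0\in\Lambda_{N_0}$. For any sub-box $Q\subsetneq\Lambda_{N_0}$ not containing $n_0$, one has
\[
(\widetilde H_Q(\theta_0)-E_0)\xi_0|_Q(n)=-\lambda\sum_{n'\in\Lambda_{N_0}\setminus Q}\widehat v_{n-n'}\xi_0(n'),\qquad n\in Q,
\]
and combining this identity with the LDT decay of $G_Q(E_0;\theta_0)$ (valid by transferring the Green's function bound from the nearest grid energy to $E_0$ itself via the resolvent identity and the spectral gap) yields the sub-exponential decay $|\xi_0(n)|\lesssim e^{-\bar\rho|n-n_0|^\gamma}$ for $|n-n_0|\geq M_0$, via a chain of telescoping boxes. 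A further restriction of $\theta_0$ (still to a positive-measure set) ensures $\mathrm{dist}(n_0,\partial\Lambda_{N_0})\geq N_0/2$.

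I then define $E_{I_0}(\theta)$ as the real-analytic branch of eigenvalues of $\widetilde H_{N_0}(\theta)$ that equals $E_0$ at $\theta=\theta_0$. Since $\|\partial_\theta\widetilde H_{N_0}(\theta)\|$ is bounded by a constant depending on $\lambda,f$, analytic perturbation theory shows the gap to neighboring eigenvalues persists on an interval $I_0\ni\theta_0$ of length $|I_0|\geq N_0^{-C_1}$, and the corresponding eigenvector $\xi(\theta)$ deforms analytically, preserving the decay bound with a continuously varying center. Taking $\xi(\theta)$ as the trial vector, the error reduces to the boundary hopping term
\[
\|(\widetilde H(\theta)-E_{I_0}(\theta))\xi(\theta)\|^2=\lambda^2\sum_{n\notin\Lambda_{N_0}}\Big|\sum_{n'\in\Lambda_{N_0}}\widehat v_{n-n'}\xi_{n'}(\theta)\Big|^2,
\]
which, by $|\widehat v_{n-n'}|\leq e^{-\rho|n-n'|^\gamma}$, the decay of $\xi(\theta)$, and the sub-additivity $(a+b)^\gamma\leq a^\gamma+b^\gamma$ for $\gamma\in(0,1)$, is bounded by $\lambda N_0^d e^{-\bar\rho(N_0/4)^\gamma}$, much smaller than $e^{-c_5(\log N_0)^{\gamma/c_1}}$ for $N_0$ large since $N_0^\gamma\gg(\log N_0)^{\gamma/c_1}$.

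The main obstacle is the localization of $\xi_0$, which is self-referential: the LDT bad set $X_{M_0}(E)$ depends on $E$, whereas $E_0$ is determined by $\theta_0$. I handle this by pre-selecting $\theta_0$ avoiding the bad sets at a grid of reference energies of spacing $\ll N_0^{-C_1}$ and then transferring the Green's function bounds from the nearest grid energy to the nearby eigenvalue $E_0$ via a resolvent expansion whose convergence is guaranteed by the spectral gap $\geq cN_0^{-C_1}$.
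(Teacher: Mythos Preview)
Your approach differs substantially from the paper's, and there are genuine gaps.

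\textbf{The decay of $\xi(\theta)$ does not persist across $I_0$.} You establish sub-exponential decay of $\xi_0$ only at the single point $\theta_0$, then invoke analytic perturbation theory to extend to an interval of length $|I_0|\sim N_0^{-C_1}$. But the spectral gap you secured is only $\sim N_0^{-C_1}$, so the first-order eigenvector correction is of size $\|\partial_\theta\widetilde H_{N_0}\|\cdot|I_0|/\text{gap}=O(1)$; there is no reason the localization center stays put or that the decay profile survives. Without decay of $\xi(\theta)$ near $\partial\Lambda_{N_0}$ for \emph{every} $\theta\in I_0$, your boundary estimate collapses to $O(\lambda)$, not $e^{-c_5(\log N_0)^{\gamma/c_1}}$. (Indeed, your claimed final error $e^{-\bar\rho(N_0/4)^\gamma}$ is much stronger than what the lemma asserts, which should have been a warning sign.)

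\textbf{You are not using the key structural input.} The paper does not pick a special $\theta_0$. It exploits Theorem~\ref{claim}, the frequency-elimination step built into the LDT: once $\omega\in\Omega_{N_0}$, for \emph{every} $\theta$ and \emph{every} $E$ there is an annulus $M_1^{\gamma/(10d)}\leq|n|\leq M_1$ (with $M_1\in[N_0^{c_3}/10,10N_0^{c_4}]$) on which all scale-$M_0$ Green's functions are good. This gives, for every $\theta$, smallness $|\phi_{s_\star}(n)|\leq e^{-c(\log N_0)^{\gamma/c_1}}$ of the eigenvector in that annulus (Poisson's identity), where $s_\star$ is chosen so that $|\lambda_{s_\star}(\theta)-f(\theta)|\leq CN_0^{d/2}\lambda$. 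One then truncates $\phi_{s_\star}$ at radius $J\sim M_1\ll N_0$ and estimates the three pieces of $(\widetilde H(\theta)-\lambda_{s_\star}(\theta))\psi$ directly. The interval $I_0$ is \emph{not} obtained by perturbation theory but by semi-algebraic complexity: the set of $\theta$ where a fixed pair $(s_\star,J)$ works is semi-algebraic of degree $\leq N_0^{C}$, hence a union of $\leq N_0^{C}$ intervals; since these intervals cover $[0,1]$, one of them has length $\geq N_0^{-C_1}$. Your plan bypasses Theorem~\ref{claim} entirely and tries to replace it with a grid-in-$E$ argument, but as written you only ensure $\theta_0\notin X_{M_0}(E)$, whereas the telescoping chain needs $\theta_0+n\omega\notin X_{M_0}(E_0)$ for $\sim N_0^d$ values of $n$; even if that union is still small, it is not what you wrote, and it still does nothing for $\theta\neq\theta_0$.
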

\begin{proof}
Fix any $\theta$. Denote by $\lambda_s(\theta),\  1\leq s\leq (2N_0+1)^d$ (resp. $\phi_s,\ \|\phi_s\|=1$) the eigenvalues (resp. corresponding eigenvectors) of $\widetilde H_{N_0}(\theta)$, where $N_0\gg 1$ will be specified later.
Then one has
\begin{align}\label{e60}
e_0=\sum_{1\leq s\leq (2N_0+1)^d}\langle e_0,\phi_s\rangle\phi_s.
\end{align}
Obviously, we have
\begin{align*}
\|(\widetilde H(\theta)-f(\theta))e_0\|\leq \sum_{m\in\Z^d}\lambda e^{-\rho|m|^\gamma}\leq C(\rho,\gamma,d)\lambda.
\end{align*}
Thus
\begin{align}
\nonumber(\widetilde H_{N_0}(\theta)-f(\theta))e_0&=(\widetilde H(\theta)-f(\theta))e_0-R_{\Z^d\setminus [-N_0,N_0]^d}\widetilde H(\theta)e_0\\
\label{e61}&=O(\lambda).
\end{align}
On the other hand, we have
\begin{align}
\nonumber\widetilde H_{N_0}(\theta)e_0&=\sum_{1\leq s\leq (2N_0+1)^d}\langle e_0,\phi_s\rangle \widetilde H_{N_0}(\theta)\phi_s\\
\label{e62}&=\sum_{1\leq s\leq (2N_0+1)^d}\langle e_0,\phi_s\rangle  \lambda_s(\theta)\phi_s.
\end{align}
Thus by combining \eqref{e60}, \eqref{e61} and \eqref{e62}, we obtain
\begin{align}
\nonumber&\left(\sum_{1\leq s\leq (2N_0+1)^d}|\langle e_0,\phi_s\rangle|^2|\lambda_s(\theta)-f(\theta)|^2\right)^{1/2}\\
\nonumber&=\left\|\sum_{1\leq s\leq (2N_0+1)^d}\langle e_0,\phi_s\rangle(\lambda_s(\theta)-f(\theta))\phi_s\right\|\\
\label{e63}&\leq C\lambda.
\end{align}
Since $1=\|e_0\|^2=\sum\limits_{1\leq s\leq (2N_0+1)^d}|\langle e_0,\phi_s\rangle|^2$, there exists some $s_\star\in [1, (2N_0+1)^d]$ so that
\begin{align}\label{e64}
|\langle e_0,\phi_{s_\star}\rangle|\geq {(2N_0+1)^{-d/2}},
\end{align}
which together with \eqref{e63} implies
\begin{align}\label{e65}
|\lambda_{s_\star}(\theta)-f(\theta)|\leq C{(2N_0+1)^{d/2}}\lambda.
\end{align}
Recall that $\omega\in\Omega_{N_0}, N_0\gg \underline{N}_0$. We have by Theorem \ref{claim},  there exist $M_0\sim (\log N_0)^{1/c_1}\geq \underline{N}_0$ and $N_0^{c_3}/10\leq M_1\leq 10N_0^{c_4}$ so that $\theta+n\omega\mod\Z^d \notin X_{M_0}$ for all $n$ satisfying $$N_0^{c_3}/10\leq M_1^{\gamma/(10d)}\leq |n|\leq M_1\leq 10N_0^{c_4}.$$
Fix $M_1^{\gamma/(10d)}\leq |n|\leq M_1$. Then we can find $Q(n)\in\mathcal{E}_{M_0}$ so that
\begin{align*}
\mathrm{dist} (n,\Lambda\setminus\Lambda_1\setminus Q(n))&\geq M_0/2,\\
\|G_{Q(n)}(E;\theta)\|&\leq e^{M_0^{\gamma/2}},\\
|G_{Q(n)}(E;\theta)(k,k')|&\leq e^{-\frac{\bar\rho}{2}|k-k'|^\gamma}\ \mbox{for $|k-k'|\geq M_0/10$}.
\end{align*}
Thus by the Poisson's identity \eqref{pi}, we have for $M_0\geq M_0(\gamma,\bar\rho)\gg1$ and $\|\phi_{s_\star}\|=1$,
\begin{align}
\nonumber|\phi_{s_\star}(n)|&=\left|\sum_{n_1\in Q(n),n_2\in \Lambda\setminus\Lambda_1\setminus Q(n)}\lambda G_{Q(n)}(E;\theta)(n,n_1)\widehat{v}_{n_1-n_2}\phi_{s_\star}(n_2)\right|\\
\nonumber&\leq \sum_{n_1\in Q(n),n_2\in \Lambda\setminus\Lambda_1\setminus Q(n)}e^{M_0^{\gamma/2}+\frac{\bar\rho}{2}({M_0}/10)^\gamma-\frac{\bar\rho}{2}|n-n_1|^\gamma-\rho|n_1-n_2|^\gamma} \\
\nonumber&\leq \sum_{n_1\in Q(n),n_2\in \Lambda\setminus\Lambda_1\setminus Q(n)}e^{M_0^{\gamma/2}+\frac{\bar\rho}{2}({M_0}/10)^\gamma-\frac{\bar\rho}{2}|n-n_2|^\gamma} \\
\nonumber&\leq \sum_{n_1\in Q(n),n_2\in \Lambda\setminus\Lambda_1\setminus Q(n)}e^{M_0^{\gamma/2}+\frac{\bar\rho}{2}({M_0}/10)^\gamma-\frac{\bar\rho}{2}(M_0/2)^\gamma} \\
\label{e66}&\leq e^{-c(\log N_0)^{\gamma/c_1}}.
\end{align}
We define
\begin{align*}
J=[{M_1+M_1^{\gamma/(10d)}}/{2}],\ \Lambda=[-J,J]^d\subset [-N_0,N_0]^d.
\end{align*}
Then by \eqref{e64}, $$\|R_{\Lambda}\phi_{s_\star}\|\geq (2N_0+1)^{-d/2}.$$ Define now $$\psi=\frac{R_{\Lambda}\phi_{s_\star}}{\|R_{\Lambda}\phi_{s_\star}\|}.$$

Since $(\widetilde H_{N_0}(\theta)-\lambda_{s_{\star}}(\theta))\phi_{s_\star}=0$, we have
\begin{align}\label{kf}
R_{\Lambda}(\widetilde H(\theta)-\lambda_{s_\star}(\theta))\psi=-\|R_{\Lambda}\phi_{s_\star}\|^{-1}R_{\Lambda}\widetilde H(\theta)R_{[-N_0,N_0]^d\setminus\Lambda}\phi_{s_\star}.
\end{align}
Thus by direct computations, we obtain
\begin{align*}
(\widetilde H(\theta)-\lambda_{s_\star}(\theta))\psi&=R_{\Z^d\setminus[-N_0,N_0]^d}\widetilde H(\theta)\psi+R_{[-N_0,N_0]^d\setminus\Lambda}\widetilde H(\theta)\psi\\
&\ \ +R_{\Lambda}(\widetilde H(\theta)-\lambda_{s_\star}(\theta))\psi\\
&=\|R_{\Lambda}\phi_{s_\star}\|^{-1}R_{\Z^d\setminus[-N_0,N_0]^d}\widetilde H(\theta)R_\Lambda\phi_{s_\star}\\
&\ \ +\|R_{\Lambda}\phi_{s_\star}\|^{-1}R_{[-N_0,N_0]^d\setminus\Lambda}\widetilde H(\theta)R_\Lambda\phi_{s_\star}\\
&\ \ + (-\|R_{\Lambda}\phi_{s_\star}\|^{-1}R_{\Lambda}\widetilde H(\theta)R_{[-N_0,N_0]^d\setminus\Lambda}\phi_{s_\star})\ (\mbox{by \eqref{kf}})\\
&=(I)+(II)+(III).
\end{align*}
For $(I)$, we have
\begin{align}
\nonumber\|(I)\|^2&\leq \lambda^2(2N_0+1)^d\sum_{|m|>N_0}\left(\sum_{|n|\leq J}e^{-\rho|m-n|^\gamma}\right)^2\\
\nonumber&\leq \lambda^2(2N_0+1)^{2d}e^{2\rho J^\gamma}\left(\sum_{|m|> N_0}e^{-\rho|m|^\gamma}\right)^2\\
\label{e67}&\leq e^{-\rho N_0^{\gamma}} \ (\mbox{since $J\leq 10N_0^{c_4}$}).
\end{align}
For $(II)$, we have since  \eqref{e66},
\begin{align}
\nonumber\|(II)\|^2&\leq \lambda^2{(2N_0+1)^d}\sum_{J<|m|\leq N_0}\left|\sum_{|n|\leq J}e^{-\rho|m-n|^\gamma}\phi_{s_\star}(n)\right|^2\\
\nonumber&\leq \lambda^2{(2N_0+1)^d}\sum_{J<|m|\leq N_0}\left(\sum_{|n|\leq M_1^{\gamma/(10d)}}e^{-\rho|m-n|^\gamma}\right)^2 \\
\nonumber&\ \ +\lambda^2{(2N_0+1)^d}\sum_{J<|m|\leq N_0}\left(\sum_{M_1^{\gamma/(10d)}\leq |n|\leq J} e^{-c(\log N_0)^{\gamma/c_1}}\right)^2\\
\nonumber&\leq \lambda^2{(10N_0)}^{3d}e^{-cJ^{\gamma}}+\lambda^2{(10N_0)}^{3d} e^{-c(\log N_0)^{\gamma/c_1}}\\
\label{e68}&\leq e^{-3c_5(\log N_0)^{\gamma/c_1}}.
\end{align}
Similarly, for $(III)$, we have
\begin{align}
\nonumber\|(III)\|^2&\leq \lambda^2{(2N_0+1)^d}\sum_{|m|\leq J}\left|\sum_{J\leq |n|\leq N_0}e^{-\rho|m-n|^\gamma}\phi_{s_\star}(n)\right|^2\\
\nonumber&\leq \lambda^2{(2N_0+1)^d}\sum_{|m|\leq J}\left(\sum_{J\leq |n|\leq M_1}e^{-c(\log N_0)^{\gamma/c_1}}\right)^2 \\
\nonumber&\ \ +\lambda^2{(2N_0+1)^d}\sum_{|m|\leq J}\left(\sum_{M_1\leq |n|\leq N_0}e^{-\rho|m-n|^\gamma} \right)^2\\
\label{e68new}&\leq e^{-3c_5(\log N_0)^{\gamma/c_1}}.
\end{align}

Thus combining \eqref{e67},  \eqref{e68} and \eqref{e68new},  we obtain
\begin{align*}
\min_{\xi\in\mathrm{Span}\{e_k:\ k\in\Z^d, |k|\leq J\},\ \|\xi\|=1}\|(\widetilde H(\theta)-\lambda_{s_\star}(\theta))\xi\|\leq e^{-c_5(\log N_0)^{\gamma/c_1}},
\end{align*}
or equivalently
\begin{align}\label{e69}
\|(R_{\Lambda}(\widetilde H(\theta)-\lambda_{s_\star}(\theta))^{*}(\widetilde H(\theta)-\lambda_{s_\star}(\theta))R_{\Lambda})^{-1}\|\geq e^{2c_5(\log N_0)^{\gamma/c_1}}.
\end{align}

Define for $1\leq s_\star\leq (2N_0+1)^d$ and $J\in [N_0^{c_3}/10,N_0^{c_4}]$ the set $\Gamma_{s_\star,J}\subset [0,1]$ of $\theta$ for which \eqref{e65} and \eqref{e69} hold. It is well-known that $\lambda_{s_\star}(\theta)$ is Lipschitz continuous in $f$ (see \cite{Tao} for details). By a standard truncation argument, we can replace $f(\theta)$ by a polynomial in $\theta$ of degree $CN_0^2$. Notice that the $\lambda_{s_\star}(\theta)$  {satisfies} the equation
\begin{align*}
\zeta^{D}+\sum_{r<D}c_r(\theta)\zeta^D=0,
\end{align*}
where $D=(2N_0+1)^d$  and $c_r(\theta)$ are polynomials of degree at most $N_0^C$. Expressing \eqref{e69} by the Cramer's rule, a polynomial condition
\begin{align*}
P(\theta,\zeta)>0
\end{align*}
is obtained in $(\theta,\zeta=\lambda_{s_\star}(\theta))$. Recalling Lemmas \ref{tsp} and \ref{btb}, $\Gamma_{s_\star,J}$ can be decomposed into $N_0^C$ many intervals $I'\subset\Gamma_{s_\star,J}$. For each such $I'$, we set $E_{I'}(\theta)=\lambda_{s_\star}(\theta),\theta\in I'$. Let $\mathcal{F}_0$ be the collection of all such intervals $I'$ (counting all possible $s_\star,J$). Then $\#\mathcal{F}_0\leq N_0^{C_1}$. In particular, for $\theta\in I'\subset\Gamma_{s_\star,J}$, we have
\begin{align*}
\min_{\xi\in\mathrm{Span}\{e_k:\ k\in\Z^d, |k|\leq N_0\},\ \|\xi\|=1}\|(\widetilde H(\theta)-E_{I'}(\theta))\xi\|\leq e^{-c_5(\log N_0)^{\gamma/c_1}}.
\end{align*}
We observe that
\begin{align*}
[f_{\min},f_{\max}]&=\bigcup_{s_\star,J} f(\Gamma_{s_\star,J})\\
&\subset \bigcup_{s_\star,J} \bigcup_{I'\subset\Gamma_{s_\star,J} }\left(\lambda_{s_\star}(I')+[-C{N_0^{d/2}}\lambda,C{N_0^{d/2}}\lambda]\right)\ (\mbox{by \eqref{e65}})\\
&=\bigcup_{I'\in\mathcal{F}_0 }\left(E_{I'}(I')+[-C{N_0^{d/2}}\lambda,C{N_0^{d/2}}\lambda]\right)
\end{align*}
Thus for $N_0\gg \underline{N}_0$ and $\lambda\leq \lambda_0(N_0)\ll1$, we get
\begin{align*}
0<f_{\max}-f_{\min}&\leq \mathrm{mes}\left(\bigcup_{I'\in\mathcal{F}_0}E_{I'}(I')\right)+N_0^{C_1}\lambda\\
&\leq \mathrm{mes}\left(\bigcup_{I'\in\mathcal{F}_0 }E_{I'}(I')\right)+\sqrt{\lambda}.
\end{align*}

Define $I_0$ to be the interval in $\mathcal{F}_0$ with the \textit{maximal} length. Then by $[0,1]\subset\bigcup_{I'\in\mathcal{F}_0}I'$ and $\#\mathcal{F}_0\leq N_0^{C_1}$, we obtain $|I_0|\geq N_0^{-C_1}$. If $\theta\in I_0$, we have
\begin{align*}
\min_{\xi\in\mathrm{Span}\{e_k:\ k\in\Z^d, |k|\leq N_0\},\ \|\xi\|=1}\|(\widetilde H(\theta)-E_{I_0}(\theta))\xi\|\leq e^{-c_5(\log N)^{\gamma/c_1}}.
\end{align*}
This proves the lemma.
\end{proof}

The following lemma is an inductive extension of Lemma \ref{l61}.
\begin{lem}\label{l62}
Let $\gamma/c_1>100$. Let $I\subset[0,1]$ be an interval and $E(\theta)\in\sigma(\widetilde H_{N}(\theta))$ be a continuous function on $I$. Assume again that
\begin{align}\label{e621}
\min_{\xi\in\mathrm{Span}\{e_k:\ k\in\Z^d, |k|\leq N\},\ \|\xi\|=1}\|(\widetilde H(\theta)-E(\theta))\xi\|\leq e^{-c_5(\log N)^{\gamma/c_1}},
\end{align}
where $c_5>0$ is given by Lemma \ref{l61}.

Let
\begin{align}\label{e622}
\N \ni N_1\sim e^{(\log N)^{10}}.
\end{align}
Then there exists a system $(I', E_{I'}(\cdot))_{I'\in \mathcal{F}_1}$ such that the following holds:  $\mathcal{F}_1$ is a collection of at most $N_1^{C_1}$ intervals $I'\subset I$ so that  $E_{I'}(\theta)\in\sigma(\widetilde H_{N_1}(\theta))$ is a continuous function on $I'$,  and for $\theta\in I'$,
\begin{align}\label{e623}
\min_{\xi\in\mathrm{Span}\{e_k:\ k\in\Z^d, |k|\leq N_1\},\ \|\xi\|=1}\|(\widetilde H(\theta)-E_{I'}(\theta))\xi\|\leq e^{-c_5(\log N_1)^{\gamma/c_1}}.
\end{align}
Moreover,
\begin{align}\label{e624}
\mathrm{mes}\left(\bigcup_{I'\in\mathcal{F}_1 }E_{I'}(I')\right)\geq \mathrm{mes}(E(I))-\frac{1}{N_1}.
\end{align}
\end{lem}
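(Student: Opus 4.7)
The plan is to mimic Lemma \ref{l61} at the new pair of scales $(N, N_1)$ in place of $(1, N_0)$, with the approximate eigenvector from \eqref{e621} taking the place of $e_0$. Fix $\theta \in I$ and let $\xi_0 \in \mathrm{Span}\{e_k:|k|\leq N\}$ achieve the minimum in \eqref{e621}. Since $\xi_0$ is supported in $[-N,N]^d$ and $N_1\gg N$, the Gevrey decay of $\widehat v$ gives $\|R_{\Z^d\setminus[-N_1,N_1]^d}\widetilde H(\theta)\xi_0\|=O(\lambda e^{-\rho N_1^\gamma/2})$, hence $\|(\widetilde H_{N_1}(\theta)-E(\theta))\xi_0\|\leq 2e^{-c_5(\log N)^{\gamma/c_1}}$. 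Expanding $\xi_0$ in the eigenbasis $\{\phi_s\}$ of $\widetilde H_{N_1}(\theta)$ and pigeonholing as in \eqref{e63}--\eqref{e65}, I obtain some $\lambda_{s_\star}(\theta)\in\sigma(\widetilde H_{N_1}(\theta))$ and normalized $\phi_{s_\star}$ with
\[
|\lambda_{s_\star}(\theta)-E(\theta)|\leq (2N_1+1)^{d/2}e^{-c_5(\log N)^{\gamma/c_1}},\quad |\langle\xi_0,\phi_{s_\star}\rangle|\geq(2N_1+1)^{-d/2}.
\]
Since $\xi_0$ is supported on $[-N,N]^d$, Cauchy--Schwarz yields $\|R_{[-N,N]^d}\phi_{s_\star}\|\geq(2N_1+1)^{-d/2}$, which is the analogue of \eqref{e64}.

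Next I localize $\phi_{s_\star}$ off $[-N,N]^d$ at the intermediate scale $M_0\sim(\log N_1)^{1/c_1}\sim(\log N)^{10/c_1}$, following the second half of Lemma \ref{l61}. Since $\omega\in\bigcap_{M\geq\underline N_0}\Omega_M$, Theorem \ref{claim} applied at scales $(M_0, N_1)$ furnishes $M_1\in[N_1^{c_3}/10, 10N_1^{c_4}]$ with $\theta+k\omega\mod\Z^d\notin X_{M_0}$ for all $k$ in the annulus $M_1^{\gamma/(10d)}\leq|k|\leq M_1$. The Poisson identity applied to the eigenvalue equation for $\phi_{s_\star}$, combined with the Green's function bounds of Theorem \ref{ldt} at scale $M_0$ and the Gevrey decay of $\widehat v$, then yields $|\phi_{s_\star}(k)|\leq e^{-c(\log N_1)^{\gamma/c_1}}$ on this annulus, exactly as in \eqref{e66}. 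Setting $J=[(M_1+M_1^{\gamma/(10d)})/2]$ and $\psi=R_{[-J,J]^d}\phi_{s_\star}/\|R_{[-J,J]^d}\phi_{s_\star}\|$, the same three-piece splitting $(I)+(II)+(III)$ of Lemma \ref{l61} produces $\|(\widetilde H(\theta)-\lambda_{s_\star}(\theta))\psi\|\leq e^{-c_5(\log N_1)^{\gamma/c_1}}$, which is \eqref{e623} at scale $N_1$ (as $J\leq N_1$).

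For the semi-algebraic step, I truncate $f$ to a trigonometric polynomial of degree $O(N_1^2)$ so that $\lambda_{s_\star}(\theta)$ becomes algebraic of degree $(2N_1+1)^d$ in $\theta$. Writing the resulting approximate-eigenvector condition in Cramer form produces a semi-algebraic subset $\Gamma_{s_\star,J}\subset I$ in the variables $(\theta,\zeta=\lambda_{s_\star}(\theta))$ of degree at most $N_1^C$. By Lemmas \ref{tsp}--\ref{btb}, $\bigcup_{s_\star,J}\Gamma_{s_\star,J}$ decomposes into at most $N_1^{C_1}$ intervals $I'\subset I$; on each $I'$, the continuous branch $E_{I'}(\theta):=\lambda_{s_\star}(\theta)$ is well defined and satisfies \eqref{e623}. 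Since every $\theta\in I$ admits some such $(s_\star,J)$, the $I'$'s cover $I$ up to a negligible set arising from the semi-algebraic boundary.

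Finally, the measure bound \eqref{e624} follows from the pointwise estimate $|E_{I'}(\theta)-E(\theta)|\leq e^{-(c_5/2)(\log N)^{\gamma/c_1}}$ (after absorbing the factor $(2N_1+1)^{d/2}$ using $\gamma/c_1>100$), so that $E(I')\subset E_{I'}(I')+[-\epsilon,\epsilon]$ with $\epsilon=e^{-(c_5/2)(\log N)^{\gamma/c_1}}$. Summing over the at most $N_1^{C_1}$ intervals gives total shortfall $\leq 2\epsilon N_1^{C_1}\ll e^{-(\log N)^{10}}=1/N_1$, again because $\gamma/c_1>100$. The main obstacle I expect is the intermediate localization: propagating the scale-$M_0$ Green's function decay to control $\phi_{s_\star}$ through the two annuli $N\ll|k|\ll J$ and $J\ll|k|\ll N_1$ using only the Gevrey (not exponential) decay of $\widehat v$, and carefully balancing the bounds so that the three error terms $(I),(II),(III)$ all yield the target rate $e^{-c_5(\log N_1)^{\gamma/c_1}}$ rather than a weaker one tied to the previous scale $N$.
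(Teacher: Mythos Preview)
Your proposal is correct and follows essentially the same route as the paper: replace $e_0$ by the approximate eigenvector $\xi_0$ from \eqref{e621}, pigeonhole in the eigenbasis of $\widetilde H_{N_1}(\theta)$ to get \eqref{e625}, repeat the Poisson-identity localization \eqref{e66} at scale $M_0\sim(\log N_1)^{1/c_1}$ to control $\phi_{s_\star}$ on the annulus $M_1^{\gamma/(10d)}\leq|k|\leq M_1$, truncate to $\psi$, run the $(I)+(II)+(III)$ splitting, apply the semi-algebraic decomposition, and deduce \eqref{e624} from $N_1^{C_1}e^{-c_5(\log N)^{\gamma/c_1}}\ll 1/N_1$ using $\gamma/c_1>100$. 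Your closing ``obstacle'' is not a real issue: since $M_1^{\gamma/(10d)}\gg N$ (because $N_1\sim e^{(\log N)^{10}}$), the single annulus already lies well outside $[-N,N]^d$, so no separate control on the region $N\ll|k|\ll M_1^{\gamma/(10d)}$ is needed beyond $\|R_{[-N,N]^d}\phi_{s_\star}\|\geq(2N_1+1)^{-d/2}$.
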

\begin{proof}
The proof is similar to that of Lemma \ref{l61}. Fix a $\theta\in I$. Choose a $\xi$ with $\|\xi\|=1$ and $\xi\in \mathrm{Span}\{e_k:\ k\in\Z^d, |k|\leq N\}$ so that \eqref{e621} holds.
Denote by $\lambda_s(\theta),\ 1\leq s\leq (2N_1+1)^d$ (resp. $\phi_s,\ \|\phi_s\|=1$) the eigenvalues (resp. corresponding eigenvectors ) of $\widetilde H_{N_1}(\theta)$.
Then one has
\begin{align*}
\xi=\sum_{1\leq s\leq (2N_1+1)^d}\langle \xi,\phi_s\rangle\phi_s.
\end{align*}
Obviously, we have
\begin{align*}
\|(\widetilde H(\theta)-E(\theta))\xi\|\leq e^{-c_5(\log N)^{\gamma/c_1}}.
\end{align*}
Thus
\begin{align*}
\|\nonumber(\widetilde H_{N_1}(\theta)-E(\theta))\xi\|&=\|(\widetilde H(\theta)-E(\theta))\xi-R_{\Z^d\setminus [-N_1,N_1]^d}\widetilde H(\theta)\xi\|\\
&\leq 2e^{-c_5(\log N)^{\gamma/c_1}}.
\end{align*}
On the other hand, one has
\begin{align*}
\widetilde H_{N_1}(\theta)=\sum_{1\leq s\leq (2N_1+1)^d}\langle \xi,\phi_s\rangle  \lambda_s(\theta)\phi_s.
\end{align*}
Thus
\begin{align*}
&\left(\sum_{|s|\leq N_1}|\langle \xi,\phi_s\rangle|^2|\lambda_s(\theta)-E(\theta)|^2\right)^{1/2}\leq 2e^{-c_5(\log N)^{\gamma/c_1}}.
\end{align*}
Since $\|\xi\|=1$, there exists a $s_\star\in [1, (2N_1+1)^d]$ so that
\begin{align*}
|\langle \xi,\phi_{s_\star}\rangle|\geq (2N_1+1)^{-d/2}
\end{align*}
and
\begin{align}\label{e625}
|\lambda_{s_\star}(\theta)-E(\theta)|\leq 2{(2N_1+1)}^{d/2}e^{-c_5(\log N)^{\gamma/c_1}}.
\end{align}
As in the proof of Lemma \ref{l61}, we have for some $M_1\in[N_1^{c_3}/10,10N_1^{c_4}]$,
\begin{align*}
|\phi_{s_\star}(n)|\leq e^{-c(\log N_1)^{\gamma/c_1}}\ \mbox{for $M_1^{\gamma/(10d)}\leq |n|\leq M_1$}.
\end{align*}
Note that  for $J=[(M_1^{\gamma/(10d)}+M_1)/2]$ and $\Lambda=[-J,J]^d$, one has $\|R_{\Lambda}\phi_{s_\star}\|\geq (2N_1+1)^{-d/2}$. Define $$\psi=\frac{R_{\Lambda}\phi_{s_\star}}{\|R_{\Lambda}\phi_{s_\star}\|}.$$ Similar to the proof of Lemma \ref{l61}, we have
\begin{align*}
\min_{\xi\in\mathrm{Span}\{e_k:\ k\in\Z^d, |k|\leq J\},\ \|\xi\|=1}\|(\widetilde H(\theta)-\lambda_{s_\star}(\theta))\xi\|\leq e^{-c_5(\log N_1)^{\gamma/c_1}},
\end{align*}
or equivalently
\begin{align}\label{e626}
\|(R_{\Lambda}(\widetilde H(\theta)-\lambda_{s_\star}(\theta))^{*}(\widetilde H(\theta)-\lambda_{s_\star}(\theta))R_{\Lambda})^{-1}\|\geq e^{2c_5(\log N_1)^{\gamma/c_1}}.
\end{align}

Similarly, we define for $1\leq s_\star\leq (2N_1+1)^d$ and $N_1^{c_3}/10\leq J\leq 10N_1^{c_4}$ the set $\Gamma_{s_\star,J}\subset I$ of $\theta$ for which \eqref{e625} and \eqref{e626} hold. Using semi-algebraic sets arguments as previous, $\Gamma_{s_\star,J}$ can be decomposed into $N_1^C$ many intervals $I'\subset\Gamma_{s_\star,J}$. For each such $I'$, we set $E_{I'}(\theta)=\lambda_{s_\star}(\theta),\theta\in I'$. Let $\mathcal{F}_1$ be the collection of all such intervals $I'$ (counting all possible $s_\star,J$). Then $\#\mathcal{F}_1\leq N_1^{C_1}$. In particular, for $\theta\in I'\subset\Gamma_{s_\star,J}$, we have
\begin{align*}
\min_{\xi\in\mathrm{Span}\{e_k:\ k\in\Z^d, |k|\leq N_1\},\ \|\xi\|=1}\|(\widetilde H(\theta)-E_{I'}(\theta))\xi\|\leq e^{-c_5(\log N_1)^{\gamma/c_1}}.
\end{align*}
This proves \eqref{e623}.
Observe that again since \eqref{e625},
\begin{align*}
E(I)&=\bigcup_{s_\star,J} E(\Gamma_{s_\star,J})\\
&\subset \bigcup_{s_\star,J} \bigcup_{I'\subset\Gamma_{s_\star,J} }\left(\lambda_{s_\star}(I')+[-2{(2N_1+1)}^{d/2}e^{-c_5(\log N)^{\gamma/c_1}},2{(2N_1+1)}^{d/2}e^{-c_5(\log N)^{\gamma/c_1}}]\right)\\
&=\bigcup_{I'\in\mathcal{F}_1}\left(E_{I'}(I')+[-2{(2N_1+1)}^{d/2}e^{-c_5(\log N)^{\gamma/c_1}},2{(2N_1+1)}^{d/2}e^{-c_5(\log N)^{\gamma/c_1}}]\right){\color{red}.}
\end{align*}
Thus by \eqref{e622}, we obtain
\begin{align*}
\mathrm{mes}(E(I))&\leq \mathrm{mes}\left(\bigcup_{I'\in\mathcal{F}_1}E_{I'}(I')\right)+N_1^{C}e^{-c_5(\log N)^{\gamma/c_1}}\\
&\leq \mathrm{mes}\left(\bigcup_{I'\in\mathcal{F}_1 }E_{I'}(I')\right)+\frac{1}{N_1}.
\end{align*}
This proves \eqref{e624}.
\end{proof}

Now we can prove Theorem \ref{cor1}.
\begin{proof}[\bf Proof of Theorem \ref{cor1}]
Choose $N_s\sim e^{(\log N_{s-1})^{10}}$ ($s\geq 1$), where $N_0$ is given by Lemma \ref{l61}. Then applying Lemmas \ref{l61} and \ref{l62} yields a system $(I,E_I(\cdot))_{I\in \mathcal{F}_s}$ satisfying for $\theta\in I\in\mathcal{F}_s$,
\begin{align}\label{e631}
\mathrm{dist}(E_I(\theta),\widetilde{\Sigma})\leq e^{-c_5(\log N_s)^{\gamma/c_1}}.
\end{align}
Moreover, for any $s\geq 1$, one has
\begin{align*}
\mathrm{mes}\left(\bigcup_{I\in\mathcal{F}_s }E_{I}(I)\right)
&\geq\mathrm{mes}\left(\bigcup_{I\in\mathcal{F}_{s-1} }E_{I}(I)\right)-\frac{1}{N_{s}} \\
&\geq \mathrm{mes}(E_{I_0}(I_0))-\sum_{s\geq1}\frac{1}{N_s}\\
&\geq \frac{\mathrm{mes}(E_{I_0}(I_0))}{2},
\end{align*}
where $E_{I_0}, I_0$ are given by Lemma \ref{l61}.
Define
\begin{align*}
\Omega=\bigcap_{s\geq 0}\bigcup_{I\in\mathcal{F}_s }E_{I}(I).
\end{align*}
Since \eqref{e631}, we have
\begin{align*}
\Omega\subset\widetilde{\Sigma}
\end{align*}
and
\begin{align*}
\mathrm{mes}(\Omega)\geq \frac{\mathrm{mes}(E_{I_0}(I_0))}{2}.
\end{align*}
Thus it suffices to establish some lower bound on $\mathrm{mes}(E_{I_0}(I_0))$.

Recall that $E_{I_0}(\cdot)$ is continuous on $I_0$ and $|I_0|\geq N_0^{-C_1}$. We can write $E_{I_0}(I_0)=[E_0+\varepsilon,E_0-\varepsilon]$ for some $E_0\in E_{I_0}(I_0)$ and $\varepsilon\geq 0$. It needs to establish some concrete lower bound on $\varepsilon$. Choose $\underline{N}_0\leq M\ll N_0 $ and apply the LDT (i.e. Theorem \ref{ldt}) at scale $M$, where $M$ will be specified later. We have
\begin{align*}
\| G_{M}(E_0;\theta)\|&\leq e^{{{M}}^{\gamma/2}},\\
|G_{M}(E_0;\theta)(n,n')|&\leq e^{-{\frac{(1-5^{-\gamma})\rho}{2}}|n-n'|^{\gamma}}\  {\mathrm{for} \ |n-n'|\geq {{M}}/{10}}
\end{align*}
provided $\theta$ is outside a set $\Theta\subset [0,1]$ with $\mathrm{mes}(\Theta)\leq e^{-M^{c_1}}$. Paving $[-N_0,N_0]^d$ with $Q\in\mathcal{E}_M$, then we have by Lemma \ref{res1},
\begin{align}\label{e632}
\|G_{N_0}(E_0;\theta)\|\leq (10M)^de^{M^{\gamma/2}}\leq e^{2M^{\gamma/2}}
\end{align}
provided $\theta$ is outside a set $\Theta_1\subset [0,1]$ with $\mathrm{mes}(\Theta_1)\leq (10N_0)^de^{-M^{c_1}}$. Fix
\begin{align*}
M\sim (\log N_0)^{3/(2c_1)}.
\end{align*}
Then
\begin{align*}
(10N_0)^de^{-M^{c_1}}< \frac{N_0^{-C_1}}{2}
\end{align*}
 and thus  $([0,1]\setminus \Theta_1)\cap I_0\neq\emptyset$. We pick a $\theta_0\in([0,1]\setminus \Theta_1)\cap I_0$  and  a $\xi$ with $\|\xi\|=1$ so that
\begin{align*}
|(\widetilde H(\theta_0)-E_{I_0}(\theta_0))\xi\|\leq e^{-c_5(\log N_0)^{\gamma/c_1}}.
\end{align*}
Note that
\begin{align}
\nonumber\|(\widetilde H_{N_0}(\theta_0)-E_0)\xi\|&= \| (\widetilde H(\theta_0)-E_{I_0}(\theta_0))\xi-(E_0-E_{I_0}(\theta_0))\xi\|\\
\nonumber&=\|(\widetilde H(\theta_0)-E_{I_0}(\theta_0))\xi-R_{\Z^d\setminus [-N_0,N_0]^d}\widetilde H(\theta_0)\xi\\
\nonumber&\ \ -(E_0-E_{I_0}(\theta_0))\xi\|\\
\label{e633}&\leq 2e^{-c_5(\log N_0)^{\gamma/c_1}}+\varepsilon.
\end{align}
Recalling \eqref{e632}, we have
\begin{align}\label{e634}
\|G_{N_0}(E_0;\theta_0)\|\leq e^{2(\log N_0)^{3\gamma/(4c_1)}}.
\end{align}
Combining \eqref{e633} and \eqref{e634} yields
\begin{align*}
e^{-2(\log N_0)^{3\gamma/(4c_1)}}\leq 2e^{-c_5(\log N_0)^{\gamma/c_1}}+\varepsilon
\end{align*}
and 
\begin{align*}
\varepsilon \geq \frac{1}{2}e^{-2(\log N_0)^{3\gamma/(4c_1)}}.
\end{align*}

In conclusion, we have shown
\begin{align*}
\mathrm{mes}(\widetilde{\Sigma})\geq e^{-10(\log N_0)^{3\gamma/(4c_1)}}>0.
\end{align*}

This proves Theorem \ref{cor1}.
\end{proof}

\section*{Acknowledgements}
I would like to thank  Svetlana Jitomirskaya  for reading the earlier versions of the paper
and her constructive suggestions.  I am very grateful to  the anonymous  referee for carefully reading
the paper and providing many valuable comments that improved the exposition of
the paper. 
 %
This work was supported by NNSF  of China  grant 11901010.

\appendix
\section{}

 We write $G_{(\cdot)}=G_{(\cdot)}(E;\theta)$ for simplicity. Let $\Lambda_1,\Lambda_2\subset \Z^d$ with $\Lambda_1\cap\Lambda_2=\emptyset$. Let $\Lambda=\Lambda_1\cup \Lambda_2$.
 If $m\in \Lambda_1$ and $n\in \Lambda$, we have
 \begin{equation}\label{Greso}
    G_{\Lambda}(m,n)= G_{\Lambda_1}(m,n)\chi_{\Lambda_1}(n){-\lambda}\sum_{n^{\prime}\in \Lambda_1,n^{\prime\prime}\in \Lambda_2} G_{\Lambda_1}(m,n^{\prime})\mathcal{T}_v(n',n'')G_{\Lambda}(n^{\prime\prime},n).
 \end{equation}

We first prove a useful perturbation argument (see Lemma A.1 of \cite{SJDE} for a more general form with $\gamma=1$). 
\begin{lem}\label{pag}
Fix $\bar\rho>0$. Let $\Lambda\subset\mathbb{Z}^d$ satisfy $\Lambda\in\mathcal{E}_N$ and let $A,B$ be two linear operators  on $\C^\Lambda$.  We assume
 \begin{align*}
\|A^{-1}\|&\leq e^{N^{\gamma/2}},\\
 |A^{-1}(n,n')|&\leq e^{-\bar\rho|n-n'|^\gamma}\ \mathrm{for}\  |n-n'|\geq N/10.
 \end{align*}
Suppose that for all $n,n'\in\Lambda$, $$|(B-A)(n,n')|\leq e^{-3\bar\rho N^\gamma-\bar\rho|n-n'|^\gamma}.$$
Then
\begin{align*}
\|B^{-1}\|&\leq 2\|A^{-1}\|,\\
 |B^{-1}(n,n')|&\leq |A^{-1}(n,n')|+e^{-\bar\rho|n-n'|^\gamma}.
 \end{align*}
\end{lem}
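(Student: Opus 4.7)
\medskip

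\noindent\textbf{Proof proposal for Lemma \ref{pag}.}

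The natural approach is to view $B$ as a perturbation of $A$ and run a Neumann series argument, but executed entry-wise so as to preserve the sub-exponential off-diagonal decay. Concretely, I would factor
\begin{equation*}
B \;=\; A + (B-A) \;=\; A\bigl(I + E\bigr), \qquad E := A^{-1}(B-A),
\end{equation*}
so that once $\|E\| \le 1/2$ is established, the operator-norm bound $\|B^{-1}\| \le 2\|A^{-1}\|$ follows immediately from $B^{-1} = (I+E)^{-1}A^{-1} = \sum_{k\ge 0}(-E)^k A^{-1}$.

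The first step is then to estimate the matrix entries
\begin{equation*}
E(n,n') \;=\; \sum_{n''\in\Lambda} A^{-1}(n,n'')\,(B-A)(n'',n').
\end{equation*}
I would split the sum according to $|n-n''| < N/10$ (where one only has the a priori bound $|A^{-1}(n,n'')| \le \|A^{-1}\| \le e^{N^{\gamma/2}}$) and $|n-n''| \ge N/10$ (where the off-diagonal decay $|A^{-1}(n,n'')|\le e^{-\bar\rho|n-n''|^\gamma}$ applies). Combined with the hypothesis on $B-A$, the integrand in each case carries at least the factor $e^{-3\bar\rho N^\gamma}$. Using subadditivity $|n-n'|^\gamma \le |n-n''|^\gamma + |n''-n'|^\gamma$ (valid for $\gamma\in(0,1]$), together with $|n-n''|^\gamma \le (N/10)^\gamma$ in the first case, every contribution can be absorbed into $e^{-\bar\rho|n-n'|^\gamma}$ times a prefactor of the form $e^{N^{\gamma/2} + \bar\rho(N/10)^\gamma - 3\bar\rho N^\gamma}$. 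For $N$ large enough this prefactor is bounded by $e^{-2\bar\rho N^\gamma}$, and summing the $\le (2N+1)^d$ choices of $n''$ still leaves a tiny quantity. This yields both $\|E\| \le 1/2$ and the pointwise bound $|E(n,n')|\le e^{-3\bar\rho N^\gamma - \bar\rho|n-n'|^\gamma}$ (say), which takes care of the first conclusion.

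For the pointwise conclusion, I would write
\begin{equation*}
B^{-1} - A^{-1} \;=\; \sum_{k\ge 1}(-E)^k A^{-1},
\end{equation*}
and treat the $k=1$ term $-E A^{-1} = -A^{-1}(B-A)A^{-1}$ directly by the same bookkeeping as above, now splitting according to whether $|n-n_1|<N/10$ or not, and whether $|n_2-n'|<N/10$ or not, applying subadditivity a second time. The worst case contributes a prefactor $e^{2N^{\gamma/2} + 2\bar\rho(N/10)^\gamma - 3\bar\rho N^\gamma}$, which for large $N$ is again $\ll 1$, so the $k=1$ term is bounded by (a small number)$\cdot e^{-\bar\rho|n-n'|^\gamma}$. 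Higher-order terms $k\ge 2$ are handled crudely: $\|(-E)^k A^{-1}\|\le 2^{-k}\|A^{-1}\|$, and a single extra factor of $E$ gives the required spatial decay, so their contribution is negligible. Summing gives the stated inequality $|B^{-1}(n,n')| \le |A^{-1}(n,n')| + e^{-\bar\rho|n-n'|^\gamma}$.

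The only real obstacle is the bookkeeping in the case analysis for the entry-wise bound on $EA^{-1}$: one must verify that in every one of the four combinations of "diagonal/off-diagonal" for the two $A^{-1}$-factors, the budget $e^{-3\bar\rho N^\gamma}$ coming from $B-A$ is sufficient to swallow both the large factor $e^{N^{\gamma/2}}$ (possibly twice), the loss $\bar\rho(N/10)^\gamma$ incurred each time subadditivity is invoked, and the polynomial volume factor $(2N+1)^{2d}$ from summing over the two intermediate indices. This is why the hypothesis is formulated with the margin $3\bar\rho N^\gamma$ rather than, say, $\bar\rho N^\gamma$.
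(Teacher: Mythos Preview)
Your approach is correct and is essentially the paper's: write $B=A(I+P)$ with $P=A^{-1}(B-A)$ and expand $B^{-1}=\sum_{s\ge0}(-P)^sA^{-1}$, estimating the series entrywise. The paper streamlines the bookkeeping by first recording the single bound $|A^{-1}(m,n)|\le e^{N^{\gamma/2}+\bar\rho(N/10)^\gamma-\bar\rho|m-n|^\gamma}$, valid for \emph{all} $m,n\in\Lambda$ (this merges your two cases into one), and then proving $|P^s(m,n)|\le e^{-\frac{3}{2}\bar\rho sN^\gamma-\bar\rho|m-n|^\gamma}$ uniformly in $s\ge1$, which obviates both the four-case split at $k=1$ and the separate crude treatment of $k\ge2$. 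One small caveat on your $k\ge2$ step: the decay of $E(n,m)$ is in the summed index $m$, not in $|n-n'|$, so what actually closes the argument is that the uniform prefactor in $|E(n,m)|$ (of order $e^{-3\bar\rho N^\gamma}$ up to subdominant losses) already beats $e^{-\bar\rho|n-n'|^\gamma}$ (since $|n-n'|\le2N$ and $\gamma\le1$ give $|n-n'|^\gamma\le2N^\gamma$) with enough margin left to absorb the factor $e^{N^{\gamma/2}}$ from $\|A^{-1}\|$; your phrase ``a single extra factor of $E$ gives the required spatial decay'' is slightly imprecise on this mechanism, but the estimate goes through.
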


\begin{proof}
Obviously,  $B=A(I+A^{-1}(B-A)).$
We write $P=A^{-1}(B-A)$. Then by the assumptions, $\|P\|\leq {1}/{2}$,
which together with the Neumann series argument implies
\begin{align*}
\|B^{-1}\|\leq \sum_{s\geq 0}2^{-s}\|A^{-1}\|=2 \|A^{-1}\|.
\end{align*}

Observing that  for any $m,n\in \Lambda$,
\begin{align*}
|A^{-1}(m,n)|\leq e^{N^{\gamma/2}+\bar\rho (N/10)^\gamma-\bar\rho|m-n|^\gamma},
\end{align*}
then for $m^0=m,m^s=n$ and $s\geq 1$, we have
$$P^{s}(m,n)=\sum_{m^1,\cdots,m^{s-1},n^1,\cdots,n^s\in \Lambda\ }\prod_{t=1}^{s}A^{-1}(m^{t-1},n^t)(B-A)(n^t,m^t).$$
Thus for $s\geq1$ and $N\gg1$, one has
\begin{align*}|P^{s}(m,n)|&\leq (CN)^{2sd}e^{s(N^{\gamma/2}-2\bar\rho N^\gamma){-\bar\rho|m-n|^\gamma}}\\
&\leq e^{-{3\bar\rho sN^\gamma}/{2}{-\bar\rho|m-n|^\gamma}}.\end{align*}
As a result, we obtain
\begin{align*}
|B^{-1}(n,n')|
&\leq|A^{-1}(n,n')|+\sum_{m\in\Lambda}\sum_{s\geq 1}|P^{s}(n,m)|\cdot|A^{-1}(m,n')|\\
&\leq|A^{-1}(n,n')|+\sum_{m\in\Lambda}\sum_{s\geq 1}e^{-{3\bar\rho sN^\gamma}/{2}{-\bar\rho|m-n|^\gamma}}\cdot|A^{-1}(m,n')|\\
&\leq|A^{-1}(n,n')|+\sum_{m\in\Lambda, |m-n'|\leq N/10}\sum_{s\geq 1}e^{-{3\bar\rho sN^\gamma}/{2}{-\bar\rho|m-n|^\gamma}+N^{\gamma/2}}\\
&\ \ +\sum_{m\in\Lambda, |m-n'|\geq N/10}\sum_{s\geq 1}e^{-{3\bar\rho sN^\gamma}/{2}{-\bar\rho|m-n|^\gamma}}e^{-\bar\rho|m-n'|^\gamma}\\
&\leq|A^{-1}(n,n')|+\sum_{m\in \Lambda,|m-n'|\leq N/10} e^{{-{\bar\rho N^\gamma}/{4}+N^{\gamma/2}-\bar\rho|n-n'|^\gamma}}\\
&\ \ +\sum_{m\in\Lambda,\ |m-n'|>N/10}e^{{-{\bar\rho N^\gamma}/{4}}-\bar\rho|n-n'|^\gamma}\\
&\leq|A^{-1}(n,n')|+e^{-\bar\rho|n-n'|^\gamma}.
\end{align*}
\end{proof}

\begin{lem}\label{res1}
 Let $ \bar\rho\in (\varepsilon,\rho]$, $M_1\leq N$ and ${\rm diam}(\Lambda)\leq 2N+1$. Suppose that for any $n\in \Lambda $, there exists some  $ W=W(n)\in \mathcal{E}_M$ with
$M_0\leq M\leq M_1$ such that
$n\in W\subset \Lambda$,  ${\rm dist} (n,\Lambda \backslash W)\geq {M}/{2}$ and
\begin{align}
\label{w1}\|G_{W}\|&\leq2 e^{{M}^{\gamma/2}},\\
\label{w2} |G_{W}(n,n')|&\leq  2e^{-\bar\rho|n-n'|^\gamma}\  {\mathrm{for} \ |n-n'|\geq {M}/{10}}.
\end{align} We assume  that $M_0\geq M_0(\varepsilon,\gamma,d)\gg1$.
Then
\begin{equation*}
  \|G_{\Lambda}\|\leq 4 (2M_1+1)^d e^{{M_1}^{\gamma/2}}.
\end{equation*}
\end{lem}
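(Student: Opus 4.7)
The plan is to establish the row-sum bound $S^* := \max_{m\in\Lambda} \sum_{n\in\Lambda}|G_\Lambda(m,n)| \leq 4(2M_1+1)^d e^{M_1^{\gamma/2}}$ and then pass to the operator norm by Schur's test. Since $\widetilde H_\Lambda(\theta)$ is self-adjoint and we may take $E\in\R$ off the finite spectrum of $\widetilde H_\Lambda$, $G_\Lambda$ is Hermitian; consequently $|G_\Lambda(m,n)|=|G_\Lambda(n,m)|$, the $\ell^1\to\ell^1$ and $\ell^\infty\to\ell^\infty$ norms of $G_\Lambda$ coincide with $S^*$, and $\|G_\Lambda\|_{\mathrm{op}}\leq S^*$ by Schur.

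For each fixed $m\in\Lambda$, I would apply the resolvent identity \eqref{Greso} with $\Lambda_1=W(m)$ and $\Lambda_2=\Lambda\setminus W(m)$, take absolute values, and sum over $n\in\Lambda$. This gives
$$S_m \;\leq\; A_m + \lambda B_m \cdot S^*,$$
where $A_m := \sum_{n\in W(m)}|G_W(m,n)|$ and $B_m := \sum_{n'\in W(m),\,n''\in\Lambda\setminus W(m)} |G_W(m,n')|\,|\widehat v_{n'-n''}|$. Using \eqref{w1}--\eqref{w2}, the estimate for $A_m$ is a routine splitting of the sum at $|n-m|=M/10$: in the inner range we pay $(M/5)^d\cdot 2e^{M^{\gamma/2}}$, and in the outer range \eqref{w2} gives a bounded sub-exponential tail. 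Thus $A_m \leq 3(2M_1+1)^d e^{M_1^{\gamma/2}}$.

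The key quantitative step is showing $\lambda B_m \leq 1/2$. I split $n'$ again at $|m-n'|=M/10$. When $|m-n'|\leq M/10$, the hypothesis $\mathrm{dist}(m,\Lambda\setminus W(m))\geq M/2$ forces $|n'-n''|\geq 2M/5$, so \eqref{gvc} makes the inner sum over $n''$ bounded by $Ce^{-\rho(2M/5)^\gamma/2}$; combined with $\leq (M/5)^d$ choices of $n'$ and $|G_W(m,n')|\leq 2e^{M^{\gamma/2}}$, this yields a contribution $\lesssim M^d e^{M^{\gamma/2} - \rho(2M/5)^\gamma/2}$. When $|m-n'|>M/10$, the decay \eqref{w2} gives $|G_W(m,n')|\leq 2e^{-\bar\rho|m-n'|^\gamma}$; the inner sum $\sum_{n''\in\Z^d}|\widehat v_{n'-n''}|$ is bounded by a constant $C(\rho,\gamma,d)$, and the outer sum $\sum_{|m-n'|>M/10} e^{-\bar\rho|m-n'|^\gamma}\lesssim e^{-\bar\rho(M/10)^\gamma/2}$. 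Both contributions are of order $e^{-cM^\gamma}$ with $c=c(\rho,\gamma,d)>0$, hence bounded by $1/(2\lambda)$ once $M\geq M_0(\varepsilon,\gamma,d)$ is large enough.

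Taking the maximum over $m$ yields $S^* \leq 3(2M_1+1)^d e^{M_1^{\gamma/2}} + \tfrac12 S^*$, whence $S^*\leq 4(2M_1+1)^d e^{M_1^{\gamma/2}}$ and the operator-norm bound follows from the Hermitian Schur argument. The main obstacle is the Gevrey bookkeeping in the third step: because $\gamma<1$, the decay $e^{-\rho|k|^\gamma}$ is only sub-exponential, so the separation $|n'-n''|\geq 2M/5$ must provide enough gain to strictly overwhelm the factor $e^{M^{\gamma/2}}$ coming from \eqref{w1}. This is precisely what fixes the quantitative dependence $M_0=M_0(\varepsilon,\gamma,d)\gg 1$ in the hypothesis, and is the only place where the Gevrey index $\gamma$ enters the proof in a non-trivial way.
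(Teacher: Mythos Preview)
Your proof is correct and follows essentially the same route as the paper: apply the resolvent identity \eqref{Greso} with $\Lambda_1=W(m)$, split according to whether $|m-n'|\lessgtr M/10$, use $\mathrm{dist}(m,\Lambda\setminus W)\geq M/2$ to extract sub-exponential decay that beats the factor $e^{M^{\gamma/2}}$, derive the self-absorbing inequality $S^*\leq A+\tfrac12 S^*$, and conclude via Schur's test and self-adjointness. One cosmetic slip: in your second case the constant $c$ in $e^{-cM^\gamma}$ comes from $\bar\rho$, not $\rho$, so it depends on $\varepsilon$ (as your $M_0(\varepsilon,\gamma,d)$ already reflects).
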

\begin{proof}
We fix $n,n'\in \Lambda$ and $W=W(n)$ as in the assumptions. Then $|W|\leq (2M+1)^d$.
By \eqref{w1} and \eqref{w2},
one has for all $k,k'\in W$,
\begin{align*}
   |G_{W}(k,k')|\leq 2e^{{M}^{\gamma/2}+{\bar{\rho}}(M/10)^\gamma}e^{-\bar\rho|k-k'|^\gamma}.
\end{align*}
Applying \eqref{Greso} with $\Lambda_1=W=W(n)$, one has
\begin{align}
 \nonumber |G_{\Lambda}(n,n')| &\leq  |G_{W}(n,n')|\chi_{W}(n') \\
\nonumber  &\ \ +2{\lambda}\sum_{n_1\in W\atop n_2\in \Lambda\backslash W} e^{{M}^{\gamma/2}+{\bar{\rho}}(M/10)^\gamma}e^{-\bar\rho|n-n_1|^\gamma-\rho|n_1-n_2|^\gamma} |G_{\Lambda}(n_2,n')|\\
  \nonumber &\leq  |G_{W}(n,n')|\chi_{W}(n')\\
  \nonumber &\ \ +2{\lambda}\sum_{n_1\in W\atop n_2\in \Lambda\backslash W} e^{{M}^{\gamma/2}+{\bar{\rho}}(M/10)^\gamma}e^{-\bar\rho|n-n_2|^\gamma} |G_{\Lambda}(n_2,n')|\\
   \nonumber&\leq|G_{W}(n,n')|\chi_{W}(n')\\
 \nonumber &\ \ +2{\lambda}(2M+1)^de^{{M}^{\gamma/2}+{\bar{\rho}}(M/10)^\gamma}\sum_{n_2\in \Lambda\atop |n_2-n|\geq {M}/{2}}   e^{-\bar\rho|n-n_2|^\gamma}|G_{\Lambda}(n_2,n')|\\
 \label{BGSle1}&\leq|G_{W}(n,n')|\chi_{W}(n')+2{\lambda}(2M+1)^de^{{M}^{\gamma/2}-\varepsilon(M/10)^\gamma}\sup_{n_2\in\Lambda}|G_{\Lambda}(n_2,n')|,
\end{align}
where the third inequality holds since ${\rm dist} (n,\Lambda\backslash W)\geq {M}/{2}$.
Summing over $n'\in \Lambda$ in \eqref{BGSle1} and using $M_0\geq M_0(\varepsilon,\gamma,d)\gg1$ yield ({since $0<\lambda<1$})
\begin{align*}
 \sup_{n\in \Lambda}\sum_{n'\in \Lambda} |G_{\Lambda}(n,n')|\leq2(2M_1+1)^de^{{M_1}^{\gamma/2}}+\frac{1}{2}\sup_{n_2\in \Lambda}\sum_{n'\in \Lambda}|G_{\Lambda}(n_2,n')|.
\end{align*}

This  lemma then follows from the Schur's test and the self-adjointness of $G_\Lambda$.

\end{proof}

\begin{lem}\label{res2}
Let $\Lambda_1\subset\Lambda\subset\Z^d$ satisfy ${\rm diam}(\Lambda)\leq 2N+1$ and $ {\rm diam}(\Lambda_1)\leq N^{\frac{\gamma}{3d}}$. Let $ M_0\geq (\log N)^{2/\gamma}$ and $\bar\rho\in [(1-5^{-\gamma})/10,(1-5^{-\gamma})\rho]$.
Suppose that for any $n\in \Lambda \backslash\Lambda_1$, there exists some  $ W=W(n)\in \mathcal{E}_M$ with
$M_0\leq M\leq N^{\gamma/3}$ such that
$n\in W\subset \Lambda\backslash\Lambda_1$, ${\rm dist} (n,\Lambda\backslash \Lambda_1\backslash W)\geq {M}/{2}$  and
\begin{align*}
\|G_{W}\|&\leq e^{{M}^{\gamma/2}},\\
 |G_{W}(n,n')|&\leq  e^{- \bar{\rho}|n-n'|^\gamma}\  {\mathrm{for} \ |n-n'|\geq {M}/{10}}.
\end{align*}
Suppose  that
\begin{equation}\label{gn}
\|G_{\Lambda}\|\leq e^{{N}^{\gamma/2}}.
\end{equation}
Then
\begin{align*}
 |G_{\Lambda}(n,n')|\leq e^{-(\bar{\rho}-\frac{C}{M_0^{\gamma/2}})|n-n'|^\gamma}\ \mathrm{for}\  |n-n'|\geq{N}/{10},
\end{align*}
where $C=C(d,\rho,\gamma)>0$.
\end{lem}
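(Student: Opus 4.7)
The strategy is to iterate the resolvent identity \eqref{Greso} along the good region $\Lambda \setminus \Lambda_1$, closing any chain that crosses into $\Lambda_1$ by the global bound $\|G_\Lambda\| \leq e^{N^{\gamma/2}}$. Since $G_\Lambda$ is Hermitian and $\mathrm{diam}(\Lambda_1) \leq N^{\gamma/(3d)} < N/10 \leq |n-n'|$, at least one of $n, n'$ lies in $\Lambda \setminus \Lambda_1$, and we may assume $n$ does. Applying \eqref{Greso} with the good box $W = W(n) \in \mathcal{E}_M$, and noting that $n' \notin W$ (since $\mathrm{diam}(W) \leq 2M+1 < N/10$), the boundary-free term $G_W(n,n')\chi_W(n')$ drops out, leaving
\begin{align*}
|G_\Lambda(n,n')| \leq \lambda \sum_{n_1 \in W,\, n_2 \in \Lambda\setminus W} |G_W(n,n_1)|\, e^{-\rho|n_1-n_2|^\gamma}\, |G_\Lambda(n_2,n')|.
\end{align*}

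The key pointwise bound comes from splitting the $n_1$-sum into the far regime $|n-n_1| \geq M/10$ and the near regime $|n-n_1| < M/10$. In the far regime I combine $|G_W(n,n_1)| \leq e^{-\bar\rho|n-n_1|^\gamma}$ with the $\gamma$-subadditivity $\bar\rho|n-n_1|^\gamma + \rho|n_1-n_2|^\gamma \geq \bar\rho|n-n_2|^\gamma$ (valid since $\bar\rho \leq \rho$ and $\gamma \leq 1$). In the near regime I use $|G_W(n,n_1)| \leq e^{M^{\gamma/2}}$ together with $|n_1-n_2|^\gamma \geq |n-n_2|^\gamma - (M/10)^\gamma$ (from $(a-b)^\gamma \geq a^\gamma - b^\gamma$); for $n_2 \in \Lambda \setminus \Lambda_1 \setminus W$ the hypothesis gives $|n-n_2| \geq M/2$, so
\begin{align*}
e^{M^{\gamma/2}} e^{-\rho|n_1-n_2|^\gamma} \leq e^{-\rho|n-n_2|^\gamma + \rho(M/10)^\gamma + M^{\gamma/2}} \leq e^{-(\bar\rho - CM_0^{-\gamma/2})|n-n_2|^\gamma},
\end{align*}
where the hypothesis $\bar\rho \leq (1-5^{-\gamma})\rho$ absorbs $\rho(M/10)^\gamma \leq \rho \cdot 5^{-\gamma}|n-n_2|^\gamma$, while $M^{\gamma/2} \leq 2^\gamma M_0^{-\gamma/2}|n-n_2|^\gamma$ (using $M \geq M_0$) handles the cost factor. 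Splitting the $n_2$-sum at $\Lambda_1$ and bounding $|G_\Lambda(n_2,n')| \leq e^{N^{\gamma/2}}$ on the $\Lambda_1$-part produces the recursion
\begin{align*}
|G_\Lambda(n,n')| \leq C\lambda(2M+1)^d \sum_{n_2 \in \Lambda \setminus \Lambda_1 \setminus W} e^{-(\bar\rho - CM_0^{-\gamma/2})|n-n_2|^\gamma} |G_\Lambda(n_2,n')| + (\text{terminal error}).
\end{align*}

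Iterating the first term -- at each step applying the same decomposition at the current site -- generates a sum over chains $n = k_0 \to k_1 \to \cdots \to k_s$ with $|k_i-k_{i+1}| \geq M_0/2$, weighted by $\prod_i e^{-(\bar\rho-CM_0^{-\gamma/2})|k_i-k_{i+1}|^\gamma}$, and terminated either by $n' \in W(k_s)$ (contributing the direct decay $e^{-\bar\rho|k_s-n'|^\gamma}$) or by entry into $\Lambda_1$ (contributing the factor $e^{N^{\gamma/2}}$). By $\gamma$-subadditivity the exponents telescope to $(\bar\rho-CM_0^{-\gamma/2})|n-n'|^\gamma$ in the first case; in the second, $|n-k_s|^\gamma \geq |n-n'|^\gamma - N^{\gamma^2/(3d)}$ combined with $|n-n'|^\gamma \geq (N/10)^\gamma \gg N^{\gamma/2}$ absorbs the terminal $e^{N^{\gamma/2}}$. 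The path multiplicity is controlled by the absolute convergence of $\sum_k e^{-\bar\rho|k|^\gamma/2}$ on the lattice and by $M_0 \geq (\log N)^{2/\gamma}$, which renders the per-step polynomial factor $(2M+1)^d$ negligible against the decay. The main obstacle is exactly this near/far bookkeeping: the hypothesis $\bar\rho \leq (1-5^{-\gamma})\rho$ is calibrated precisely so that the $\rho(M/10)^\gamma$ excess from the near regime is absorbed into the $CM_0^{-\gamma/2}$ correction rather than producing a fixed multiplicative loss in $\bar\rho$.
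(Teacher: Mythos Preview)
Your iteration scheme and the near/far splitting are essentially identical to the paper's, and your handling of the case $n'\in\Lambda_1$ is correct. The gap is in the termination when \emph{both} $n,n'\in\Lambda\setminus\Lambda_1$. You claim that if the chain enters $\Lambda_1$ at $k_s$ then $|n-k_s|^\gamma\geq|n-n'|^\gamma-N^{\gamma^2/(3d)}$, but this inequality relies on $|k_s-n'|\leq\mathrm{diam}(\Lambda_1)\leq N^{\gamma/(3d)}$, which is only true when $n'$ itself lies in (or near) $\Lambda_1$. If $n'$ is far from $\Lambda_1$ --- say $n$ is adjacent to $\Lambda_1$ while $n'$ sits at distance $N/10$ from it --- a chain may enter $\Lambda_1$ at the very first step, yielding accumulated decay $e^{-(\bar\rho-CM_0^{-\gamma/2})|n-k_s|^\gamma}$ with $|n-k_s|=O(1)$, and the crude terminal factor $\|G_\Lambda\|\leq e^{N^{\gamma/2}}$ cannot recover the missing $|k_s-n'|^\gamma$ decay.

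The paper closes this precisely by a two–stage argument. First it proves the off–diagonal bound for $n\in\Lambda\setminus\Lambda_1$, $n'\in\Lambda_1$ (at the relaxed threshold $|n-n'|\geq N^{\gamma/2}$): here your termination argument \emph{is} valid because entering $\Lambda_1$ automatically means landing within $\mathrm{diam}(\Lambda_1)$ of $n'$. Self–adjointness then gives the same bound for $n\in\Lambda_1$, $n'\in\Lambda\setminus\Lambda_1$, namely $|G_\Lambda(n,n')|\leq e^{-(\bar\rho-CM_0^{-\gamma/2})|n-n'|^\gamma+3N^{\gamma/2}}$. In the second stage, with $n,n'\in\Lambda\setminus\Lambda_1$, one iterates until $n_2\in\Lambda_1$ and then invokes this already–established decaying bound on $|G_\Lambda(n_2,n')|$ (rather than the norm bound $e^{N^{\gamma/2}}$), so that the decay in $|n_2-n'|$ completes the telescoping. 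The two–stage structure is not cosmetic; it is what makes the $\Lambda_1$–termination close when both endpoints are in the good region.
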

\begin{proof}
We first assume $n\in\Lambda\setminus\Lambda_1,n'\in\Lambda_1$ and $|n-n'|\geq N^{\gamma/2}$. We let $W=W(n)\subset\Lambda\setminus\Lambda_1$ satisfy the assumptions as above. Note that for $|n-n_2|\geq M/2$ and $0<\rho<(1-5^{-\gamma})\rho$, one has
\begin{align}\label{rhobar}
e^{-\rho|n-n_2|^\gamma+\rho(M/10)^\gamma}\leq e^{-\bar\rho|n-n_2|^\gamma}.
\end{align}
Recall that  $0<\lambda<1$ and $|n-n'|\geq N^{\gamma/2}>10N^{\gamma/3}>\mathrm{diam}(W)$.  Applying \eqref{Greso} with $\Lambda_1=W=W(n)$ yields
\begin{align}
\nonumber|G_{\Lambda}(n,n')|
&\leq \nonumber  \sum_{n_1\in W,|n_1-n|\leq \frac{M}{10}\atop n_2\in \Lambda\backslash W} e^{{M}^{\gamma/2}}e^{-\rho|n_1-n_2|^\gamma}|G_{\Lambda}(n_2,n')|\\
\nonumber&\ \ + \sum_{n_1\in W,|n_1-n|\geq \frac{M}{10}\atop n_2\in \Lambda\backslash W} e^{-\bar{\rho} |n-n_1|^\gamma}e^{-\rho|n_1-n_2|^\gamma} |G_{\Lambda}(n_2,n')|\\
&\leq \nonumber  \sum_{n_1\in W,|n_1-n|\leq \frac{M}{10}\atop n_2\in \Lambda\backslash W} e^{{M}^{\gamma/2}}e^{-\rho|n-n_2|^\gamma+\rho(M/10)^\gamma}|G_{\Lambda}(n_2,n')|\\
\nonumber&\ \ + \sum_{n_1\in W,|n_1-n|\geq \frac{M}{10}\atop n_2\in \Lambda\backslash W} e^{-\bar\rho|n-n_2|^\gamma} |G_{\Lambda}(n_2,n')|\\
&\leq \nonumber \sum_{n_1\in W,|n_1-n|\leq \frac{M}{10}\atop n_2\in \Lambda\backslash W} e^{{M}^{\gamma/2}}e^{-\bar\rho|n-n_2|^\gamma}|G_{\Lambda}(n_2,n')|\ (\mbox{by \eqref{rhobar}})\\
\nonumber&\ \ + \sum_{n_1\in W,|n_1-n|\geq \frac{M}{10}\atop n_2\in \Lambda\backslash W} e^{-\bar\rho|n-n_2|^\gamma} |G_{\Lambda}(n_2,n')|\\
\label{rsi8}&\leq  2(2N+1)^{2d}\sup_{n_2\in \Lambda\backslash W}  e^{-(\bar\rho-\frac{C}{M_0^{\gamma/2}})|n-n_2|^\gamma}|G_{\Lambda}(n_2,n')|,
\end{align}
where the last inequality holds because of $|n-n_2|\geq {M}/{2}$ and $M\geq M_0$.
Iterating \eqref{rsi8} until $|n_2-n'|\leq N^{{\gamma}/2}$ (but stopping at most $\frac{C|n-n'|^\gamma}{M_0^\gamma}$ steps), we have since $|n-n'|\geq N^{\gamma/2}$ and $M_0\geq(\log N)^{2/\gamma}$,
\begin{align*}
  |G_{\Lambda}(n,n')| &\leq   (10N)^{ \frac{C|n-n'|^\gamma}{M_0^\gamma}} e^{-(\bar\rho-\frac{C}{M_0^{\gamma/2}})(|n-n'|^\gamma-N^{{\gamma^2}/2}) }e^{{N}^{\gamma/2}}\\
   &\leq e^{-(\bar{\rho}-\frac{C}{M_0^{\gamma/2}}-\frac{C\log N}{M_0^\gamma})|n-n'|^\gamma+2N^{\gamma/2}}\ \mbox{(since $0<\rho<1$)}\\
   &\leq  e^{-(\bar{\rho}-\frac{C}{M_0^{\gamma/2}})|n-n'|^\gamma+2N^{\gamma/2}}.
\end{align*}
Recalling  \eqref{gn}  again, we obtain for all $n\in\Lambda\setminus\Lambda_1,n'\in\Lambda_1$,
\begin{align*}
|G_{\Lambda}(n,n')|\leq  e^{-(\bar{\rho}-\frac{C}{M_0^{\gamma/2}})|n-n'|^\gamma+3N^{\gamma/2}}.
\end{align*}
Then by the self-adjointness of $G_{\Lambda}$, one has for $n\in\Lambda_1,n'\in\Lambda\setminus\Lambda_1$,
\begin{align}\label{bad}
|G_{\Lambda}(n,n')|\leq  e^{-(\bar{\rho}-\frac{C}{M_0^{\gamma/2}})|n-n'|^\gamma+3N^{\gamma/2}}.
\end{align}

We now assume $n,n'\in \Lambda$ satisfy $|n-n'|\geq N^{\gamma/2}$. By $\mathrm{diam}(\Lambda_1)\leq N^{\frac{\gamma}{3d}}$, at least one of $n,n'$ must be in $\Lambda\setminus\Lambda_1$. From the above discussions, it remains  assuming $n,n'\in\Lambda\setminus\Lambda_1$. Similar to the proof of \eqref{rsi8}, we have
\begin{align}\label{rsi8new}
|G_{\Lambda}(n,n')|&\leq  2(2N+1)^{2d}\sup_{n_2\in \Lambda\backslash W}  e^{-(\bar\rho-\frac{C}{M_0^{\gamma/2}})|n-n_2|^\gamma}|G_{\Lambda}(n_2,n')|,
\end{align}
where $|n-n_2|\geq M/2$. Hence iterating \eqref{rsi8new} until $n_2\in\Lambda_1$ (but stopping at most $\frac{C|n-n'|^\gamma}{M_0^\gamma}$ steps), we have for $|n-n'|\geq N^{\gamma/2}$ (and some $n_2\in\Lambda_1$),
\begin{align*}
  |G_{\Lambda}(n,n')| &\leq   (10N)^{ \frac{C|n-n'|^\gamma}{M_0^\gamma}} e^{-(\bar\rho-\frac{C}{M_0^{\gamma/2}})|n-n_2|^\gamma}|G_{\Lambda}(n_2,n')|\\
   &\leq  (10N)^{ \frac{C|n-n'|^\gamma}{M_0^\gamma}} e^{-(\bar\rho-\frac{C}{M_0^{\gamma/2}})|n-n_2|^\gamma}e^{-(\bar{\rho}-\frac{C}{M_0^{\gamma/2}})|n_2-n'|^\gamma+3N^{\gamma/2}}\ (\mbox{by \eqref{bad}})\\
   &\leq  e^{-(\bar{\rho}-\frac{C}{M_0^{\gamma/2}})|n-n'|^\gamma+3N^{\gamma/2}}.
\end{align*}

Finally, since $|n-n'|\geq N/10$,  we have $\frac{N^{\gamma/2}}{|n-n'|^\gamma}\ll M_0^{-\gamma/2}$.

This finishes the proof.
\end{proof}


\begin{thebibliography}{CCYZ19}

\bibitem[AA80]{AAA}
S.~Aubry and G.~Andr\'{e}.
\newblock Analyticity breaking and {A}nderson localization in incommensurate
  lattices.
\newblock In {\em Group theoretical methods in physics ({P}roc. {E}ighth
  {I}nternat. {C}olloq., {K}iryat {A}navim, 1979)}, volume~3 of {\em Ann.
  Israel Phys. Soc.}, pages 133--164. Hilger, Bristol, 1980.

\bibitem[AD08]{AD08}
A.~Avila and D.~Damanik.
\newblock Absolute continuity of the integrated density of states for the
  almost {M}athieu operator with non-critical coupling.
\newblock {\em Invent. Math.}, 172(2):439--453, 2008.

\bibitem[AFK11]{AFK11}
A.~Avila, B.~Fayad, and R.~Krikorian.
\newblock A {KAM} scheme for {${\rm SL}(2,\Bbb R)$} cocycles with {L}iouvillean
  frequencies.
\newblock {\em Geom. Funct. Anal.}, 21(5):1001--1019, 2011.

\bibitem[AJ09]{AJA}
A.~Avila and S.~Jitomirskaya.
\newblock The {T}en {M}artini {P}roblem.
\newblock {\em Ann. of Math. (2)}, 170(1):303--342, 2009.

\bibitem[AJ10]{AJ10}
A.~Avila and S.~Jitomirskaya.
\newblock Almost localization and almost reducibility.
\newblock {\em J. Eur. Math. Soc. (JEMS)}, 12(1):93--131, 2010.

\bibitem[AK06]{AKA}
A.~Avila and R.~Krikorian.
\newblock Reducibility or nonuniform hyperbolicity for quasiperiodic
  {S}chr\"{o}dinger cocycles.
\newblock {\em Ann. of Math. (2)}, 164(3):911--940, 2006.

\bibitem[Avi]{AviARC}
A.~Avila.
\newblock {KAM}, {L}yapunov exponents and the spectral dichotomy for
  one-frequency {S}chr\"odinger operators.
\newblock {\em In preparation}.

\bibitem[Avi10]{Avi10}
A.~Avila.
\newblock Almost reducibility and absolute continuity {I}.
\newblock {\em arXiv:1006.0704}, 2010.

\bibitem[AvMS90]{AMSC}
J.~Avron, P.~H.~M. van Mouche, and B.~Simon.
\newblock On the measure of the spectrum for the almost {M}athieu operator.
\newblock {\em Comm. Math. Phys.}, 132(1):103--118, 1990.

\bibitem[AYZ17]{AYZD}
A.~Avila, J.~You, and Q.~Zhou.
\newblock Sharp phase transitions for the almost {M}athieu operator.
\newblock {\em Duke Math. J.}, 166(14):2697--2718, 2017.

\bibitem[BG00]{BGA}
J.~Bourgain and M.~Goldstein.
\newblock On nonperturbative localization with quasi-periodic potential.
\newblock {\em Ann. of Math. (2)}, 152(3):835--879, 2000.

\bibitem[BGS01]{BGS01}
J.~Bourgain, M.~Goldstein, and W.~Schlag.
\newblock Anderson localization for {S}chr\"{o}dinger operators on {$\Bbb Z$}
  with potentials given by the skew-shift.
\newblock {\em Comm. Math. Phys.}, 220(3):583--621, 2001.

\bibitem[BGS02]{BGSA}
J.~Bourgain, M.~Goldstein, and W.~Schlag.
\newblock Anderson localization for {S}chr\"odinger operators on {$\bold Z^2$}
  with quasi-periodic potential.
\newblock {\em Acta Math.}, 188(1):41--86, 2002.

\bibitem[BJ02]{BJI}
J.~Bourgain and S.~Jitomirskaya.
\newblock Absolutely continuous spectrum for 1{D} quasiperiodic operators.
\newblock {\em Invent. Math.}, 148(3):453--463, 2002.

\bibitem[BK]{BKarxiv}
K.~Bjerkl\"ov and R.~Krikorian.
\newblock Coexistence of ac and pp spectrum for kicked quasi-periodic
  potentials.
\newblock {\em J. Spectr. Theory (to appear), arXiv: 1901.04418}.

\bibitem[BK19]{BKG}
J.~Bourgain and I.~Kachkovskiy.
\newblock Anderson localization for two interacting quasiperiodic particles.
\newblock {\em Geom. Funct. Anal.}, 29(1):3--43, 2019.

\bibitem[BLT83]{BLTC}
J.~Bellissard, R.~Lima, and D.~Testard.
\newblock A metal-insulator transition for the almost {M}athieu model.
\newblock {\em Comm. Math. Phys.}, 88(2):207--234, 1983.

\bibitem[Bou02a]{BA}
J.~Bourgain.
\newblock Estimates on {G}reen's functions, localization and the quantum kicked
  rotor model.
\newblock {\em Ann. of Math. (2)}, 156(1):249--294, 2002.

\bibitem[Bou02b]{Bou021}
J.~Bourgain.
\newblock On the spectrum of lattice {S}chr\"{o}dinger operators with
  deterministic potential. {II}.
\newblock {\em J. Anal. Math.}, 88:221--254, 2002.
\newblock Dedicated to the memory of Tom Wolff.

\bibitem[Bou05]{BB}
J.~Bourgain.
\newblock {\em Green's function estimates for lattice {S}chr\"odinger operators
  and applications}, volume 158 of {\em Annals of Mathematics Studies}.
\newblock Princeton University Press, Princeton, NJ, 2005.

\bibitem[Bou07]{BG}
J.~Bourgain.
\newblock Anderson localization for quasi-periodic lattice {S}chr\"odinger
  operators on {$\Bbb Z^d$}, {$d$} arbitrary.
\newblock {\em Geom. Funct. Anal.}, 17(3):682--706, 2007.

\bibitem[Cai21]{Cai21}
A.~Cai.
\newblock The absolutely continuous spectrum of finitely differentiable
  quasi-periodic {S}chr\"odinger operators.
\newblock {\em arXiv:2103.15525}, 2021.

\bibitem[CCYZ19]{CCYZ}
A.~Cai, C.~Chavaudret, J.~You, and Q.~Zhou.
\newblock Sharp {H}\"{o}lder continuity of the {L}yapunov exponent of finitely
  differentiable quasi-periodic cocycles.
\newblock {\em Math. Z.}, 291(3-4):931--958, 2019.

\bibitem[CD89]{CDJ}
V.~Chulaevsky and F.~Delyon.
\newblock Purely absolutely continuous spectrum for almost {M}athieu operators.
\newblock {\em J. Statist. Phys.}, 55(5-6):1279--1284, 1989.

\bibitem[CD93]{CD93}
V.~A. Chulaevsky and E.~I. Dinaburg.
\newblock Methods of {KAM}-theory for long-range quasi-periodic operators on
  {${\bf Z}^\nu$}. {P}ure point spectrum.
\newblock {\em Comm. Math. Phys.}, 153(3):559--577, 1993.

\bibitem[Del87]{Del87}
F.~Delyon.
\newblock Absence of localisation in the almost {M}athieu equation.
\newblock {\em J. Phys. A}, 20(1):L21--L23, 1987.

\bibitem[Eli92]{EC}
L.~H. Eliasson.
\newblock Floquet solutions for the {$1$}-dimensional quasi-periodic
  {S}chr\"odinger equation.
\newblock {\em Comm. Math. Phys.}, 146(3):447--482, 1992.

\bibitem[Eli97]{EA}
L.~H. Eliasson.
\newblock Discrete one-dimensional quasi-periodic {S}chr\"odinger operators
  with pure point spectrum.
\newblock {\em Acta Math.}, 179(2):153--196, 1997.

\bibitem[FK09]{FK09}
B.~Fayad and R.~Krikorian.
\newblock Rigidity results for quasiperiodic {${\rm SL}(2,\Bbb R)$}-cocycles.
\newblock {\em J. Mod. Dyn.}, 3(4):497--510, 2009.

\bibitem[FS83]{FS83}
J.~Fr\"{o}hlich and T.~Spencer.
\newblock Absence of diffusion in the {A}nderson tight binding model for large
  disorder or low energy.
\newblock {\em Comm. Math. Phys.}, 88(2):151--184, 1983.

\bibitem[FSW90]{FSWC}
J.~Fr\"ohlich, T.~Spencer, and P.~Wittwer.
\newblock Localization for a class of one-dimensional quasi-periodic
  {S}chr\"odinger operators.
\newblock {\em Comm. Math. Phys.}, 132(1):5--25, 1990.

\bibitem[GYZ]{GYZarxiv}
L.~Ge, J.~You, and Q.~Zhou.
\newblock Exponential dynamical localization: {C}riterion and {A}pplications.
\newblock {\em Ann. Sci. \'{E}c. Norm. Sup\'{e}r. (to appear),
  arXiv:1901.04258}.

\bibitem[HA09]{Amo09}
S.~Hadj~Amor.
\newblock H\"{o}lder continuity of the rotation number for quasi-periodic
  co-cycles in {${\rm SL}(2,\Bbb R)$}.
\newblock {\em Comm. Math. Phys.}, 287(2):565--588, 2009.

\bibitem[HS89]{HSM}
B.~Helffer and J.~Sj\"{o}strand.
\newblock Semiclassical analysis for {H}arper's equation. {III}. {C}antor
  structure of the spectrum.
\newblock {\em M\'{e}m. Soc. Math. France (N.S.)}, (39):1--124, 1989.

\bibitem[HWZ20]{HWZ20}
X.~Hou, J.~Wang, and Q.~Zhou.
\newblock Absolutely continuous spectrum of multifrequency quasiperiodic
  {S}chr\"{o}dinger operator.
\newblock {\em J. Funct. Anal.}, 279(6):108632, 33, 2020.

\bibitem[Jit94]{Jit94}
S.~Jitomirskaya.
\newblock Anderson localization for the almost {M}athieu equation: a
  nonperturbative proof.
\newblock {\em Comm. Math. Phys.}, 165(1):49--57, 1994.

\bibitem[Jit99]{JA}
S.~Jitomirskaya.
\newblock Metal-insulator transition for the almost {M}athieu operator.
\newblock {\em Ann. of Math. (2)}, 150(3):1159--1175, 1999.

\bibitem[JK02]{JKM1}
S.~Jitomirskaya and I.~Krasovsky.
\newblock Continuity of the measure of the spectrum for discrete quasiperiodic
  operators.
\newblock {\em Math. Res. Lett.}, 9(4):413--421, 2002.

\bibitem[JK16]{JKM}
S.~Jitomirskaya and I.~Kachkovskiy.
\newblock {$L^2$}-reducibility and localization for quasiperiodic operators.
\newblock {\em Math. Res. Lett.}, 23(2):431--444, 2016.

\bibitem[JK19]{JK19}
S.~Jitomirskaya and I.~Krasovsky.
\newblock Critical almost {M}athieu operator: hidden singularity, gap
  continuity, and the {H}ausdorff dimension of the spectrum.
\newblock {\em arXiv:1909.04429}, 2019.

\bibitem[JL18]{JLA}
S.~Jitomirskaya and W.~Liu.
\newblock Universal hierarchical structure of quasiperiodic eigenfunctions.
\newblock {\em Ann. of Math. (2)}, 187(3):721--776, 2018.

\bibitem[JLS20]{JLS}
S.~Jitomirskaya, W.~Liu, and Y.~Shi.
\newblock Anderson localization for multi-frequency quasi-periodic operators on
  $\mathbb{Z}^d$.
\newblock {\em Geom. Funct. Anal.}, 30(2):457--481, 2020.

\bibitem[JSY19]{JSY}
W.~Jian, Y.~Shi, and X.~Yuan.
\newblock Anderson localization for one-frequency quasi-periodic block
  operators with long-range interactions.
\newblock {\em J. Math. Phys.}, 60(6):063504, 15, 2019.

\bibitem[Kle05]{KJ}
S.~Klein.
\newblock Anderson localization for the discrete one-dimensional quasi-periodic
  {S}chr\"{o}dinger operator with potential defined by a {G}evrey-class
  function.
\newblock {\em J. Funct. Anal.}, 218(2):255--292, 2005.

\bibitem[Kle14]{KJ1}
S.~Klein.
\newblock Localization for quasiperiodic {S}chr\"{o}dinger operators with
  multivariable {G}evrey potential functions.
\newblock {\em J. Spectr. Theory}, 4(3):431--484, 2014.

\bibitem[Las94]{LC}
Y.~Last.
\newblock Zero measure spectrum for the almost {M}athieu operator.
\newblock {\em Comm. Math. Phys.}, 164(2):421--432, 1994.

\bibitem[Pui06]{Pui06}
J.~Puig.
\newblock A nonperturbative {E}liasson's reducibility theorem.
\newblock {\em Nonlinearity}, 19(2):355--376, 2006.

\bibitem[Shi19]{SJDE}
Y.~Shi.
\newblock Analytic solutions of nonlinear elliptic equations on rectangular
  tori.
\newblock {\em J. Differential Equations}, 267(9):5576--5600, 2019.

\bibitem[Sin87]{SJ}
Ya.~G. Sinai.
\newblock Anderson localization for one-dimensional difference {S}chr\"odinger
  operator with quasiperiodic potential.
\newblock {\em J. Statist. Phys.}, 46(5-6):861--909, 1987.

\bibitem[Tao12]{Tao}
T.~Tao.
\newblock {\em Topics in random matrix theory}, volume 132 of {\em Graduate
  Studies in Mathematics}.
\newblock American Mathematical Society, Providence, RI, 2012.

\end{thebibliography}

 \end{document}